\newcommand{\tr}{\operatorname{tr}}
\newcommand{\supp}{\operatorname{supp}}
\newcommand{\Rr}{{\mathbb{R}}}
\newcommand{\bx}{{\bf x}}
\newcommand{\bdx}{\dot{\bf x}}
\def\leq{\leqslant}
\def\geq{\geqslant}
\def \N{\mathbb{N}}
\def \R{\mathbb{R}}
\def\vfi{\varphi}
\newcommand{\leqnomode}{\tagsleft@true}
\newcommand{\reqnomode}{\tagsleft@false}
\newcommand{\miezz}{\frac{1}{2}}
\newcommand{\eps}{\varepsilon}
\newcommand{\into}{\ensuremath{\int_{\Omega}}}
\newcommand{\intf}{\ensuremath{\int_{t}^{T}\int_{\Omega}}}
\newcommand{\inti}{\ensuremath{\int_{0}^{t}\int_{\Omega}}}
\newcommand{\intif}{\ensuremath{\int_{0}^{T}\int_{\Omega}}}
\newcommand{\norm}[1]{\ensuremath{\left\Arrowvert #1 \right\Arrowvert}}
\newcommand{\norminf}[1]{\ensuremath{\left\Arrowvert #1 \right\Arrowvert_\infty}}
\newcommand{\weak}{\ensuremath{\rightharpoonup}}
\newcommand{\intb}{\int_{0}^{T}\int_{\partial\Omega}}
\newcommand{\bo}[1]{\boldsymbol{#1}}
\numberwithin{equation}{section}
\newtheoremstyle{thmlemcorr}{10pt}{10pt}{\itshape}{}{\bfseries}{.}{10pt}{{\thmname{#1}\thmnumber{
#2}\thmnote{ (#3)}}}
\newtheoremstyle{teolemcorr*}{10pt}{10pt}{\itshape}{}{\bfseries}{.}\newline{{\thmname{#1}\thmnumber{
\newtheoremstyle{defi}{10pt}{10pt}{\itshape}{}{\bfseries}{.}{10pt}{{\thmname{#1}\thmnumber{
#2}\thmnote{ (#3)}}}
\newtheoremstyle{remexample}{10pt}{10pt}{}{}{\bfseries}{.}{10pt}{{\thmname{#1}\thmnumber{
#2}\thmnote{ (#3)}}}
\newtheoremstyle{ass}{10pt}{10pt}{}{}{\bfseries}{.}{10pt}{{\thmname{#1}\thmnumber{
A#2}\thmnote{ (#3)}}}
\theoremstyle{thmlemcorr}
\newtheorem{theorem}{Theorem}
\numberwithin{theorem}{section}
\newtheorem{lemma}[theorem]{Lemma}
\newtheorem{teo}[theorem]{Theorem}
\newtheorem{lem}[theorem]{Lemma}
\newtheorem{pro}[theorem]{Proposition}
\newtheorem{cor}[theorem]{Corollary}
\theoremstyle{thmlemcorr*}
\newtheorem{theorem*}{Theorem}
\newtheorem{lemma*}[theorem]{Lemma}
\newtheorem{corollary*}[theorem]{Corollary}
\newtheorem{proposition*}[theorem]{Proposition}
\newtheorem{problem*}[theorem]{Problem}
\newtheorem{conjecture*}[theorem]{Conjecture}
\theoremstyle{defi}
\newtheorem{definition}[theorem]{Definition}
\newtheorem{hyp}{Assumption}
\newtheorem{problem}{Problem}
\theoremstyle{remexample}
\newtheorem{remark}[theorem]{Remark}
\theoremstyle{ass}
\begin{document}

\title[Time dependent first-order MFG with Neumann Conditions]{Time dependent first-order Mean Field Games with Neumann boundary conditions}

\author{Diogo A. Gomes}
\address[D. A. Gomes]{
King Abdullah University of Science and Technology (KAUST), CEMSE Division, Thuwal 23955-6900. Saudi Arabia.}
\email{diogo.gomes@kaust.edu.sa}
\author{Michele Ricciardi}
\address[M. Ricciardi]{
Università LUISS - Guido Carli. Viale Romania 32, 00197 Roma, Italy.}
\email{ricciardim@luiss.it}

\keywords{Mean Field Games; Neumann conditions; }
\subjclass[2010]{
35J47, 
35A01} 

\thanks{The authors were supported by King Abdullah University of Science and Technology (KAUST) baseline funds and KAUST OSR-CRG2021-4674.
}
\date{\today}

\begin{abstract}
	The primary objective of this paper is to understand first-order, time-depen\-dent mean-field games with Neumann boundary conditions, a question that remains under-explored in the literature. This matter is particularly relevant given the importance of boundary conditions in crowd models. 
	 In our model, the Neumann conditions result from 
	players entering the domain $\Omega$ according to a prescribed current $j$, for instance, in a crowd entry scenario into an open-air concert or stadium. 
	We formulate the model as a standard mean-field game coupling a Hamilton-Jacobi equation
	with a Fokker-Planck equation. Then, we introduce a relaxed variational problem and use Fenchel-Rockafellar duality to study the relation between these problems. Finally, 
	we prove the existence and uniqueness of solutions for the system using variational methods.
\end{abstract}

\maketitle

\section{Introduction}

Mean Field Game (MFG) theory, introduced by Lasry and Lions \cite{ll1,ll2,ll3} and independently by Caines, Huang, and Malham\'e \cite{Caines1}, 
describes non-cooperative differential games with numerous agents,  finding applications in diverse fields such as economics, finance, crowd dynamics, and social sciences. 
When the number of agents is large, modeling individual strategies becomes impractical. To address this, we use a macroscopic approximation of the game, leading to a system of partial differential equations (PDEs).
A key feature of this framework is that it ensures each agent acts rationally by accounting for their personal goals and the expected behavior of the entire population. 
Agents optimally choose their actions by anticipating future costs and considering how the actions of others will impact the system.
For instance, agents may avoid congested areas to minimize travel costs or choose quicker exits when fewer agents are nearby.
This system comprises a Hamilton--Jacobi equation for the value function $u$ associated with the cost functional of the generic agent and a transport or Fokker--Planck equation for the law of the population, $m$.

This paper provides the first rigorous examination of first-order MFGs with Neumann boundary conditions in a time-dependent setting, addressing a significant gap in the literature. 
Boundary conditions play a crucial role in crowd modeling, particularly in determining how individual agents interact with the environment at domain edges. In real-world scenarios, such as the flow of people into or out of a venue, these conditions are essential for accurately representing crowd dynamics. They determine factors like entry and exit rates, restrictions on movement in certain areas, and population behavior at physical barriers, entrances, or exits.

A substantial portion of the theoretical work in MFG research has been dedicated to topics such as well-posedness, solution regularity, and long-time behavior. While much of the literature focuses on the periodic setting, boundary conditions, including Dirichlet, reflecting, invariance, and state constraint conditions, are vital in applications.

Reflecting conditions and state constraint conditions both deal with situations where agents cannot leave the domain $\Omega$, but they differ in their implementation and implications. Reflecting conditions model scenarios where agents encounter a physical barrier at the boundary, such as a wall, or face economic or financial constraints that cause them to "bounce back" into the domain, see eg \cite{moll}. In contrast, state constraint conditions arise when agents are simply not allowed to leave the domain, without specifying a reflection mechanism. These were studied in the  first-order setting in \cite{CaCaCa2018,CAPUANI2022180,CapMarRic}, and in the ergodic second-order case in \cite{CannMend,PorRic-ergodic}. Another natural possibility is the Dirichlet case, which corresponds to exit costs at the boundary, as explored in \cite{CampiFischer}.

The existence and uniqueness of solutions for second-order cases were examined not only in periodic settings (e.g., $\Omega=\mathbb T^N$ in \cite{GPM2,GPM3} for subquadratic and superquadratic cases), but also under Dirichlet or Neumann boundary conditions. For example, see \cite{porretta2} for an in-depth exploration of these conditions with minimal assumptions on the data, \cite{MR3333058} for Neumann boundary conditions in multi-population MFG, and \cite{FestaRicciardi} for forward-forward MFGs with Dirichlet or Neumann boundary conditions.

The study of the Master Equation and the convergence problem in MFG theory under Dirichlet and Neumann conditions has been explored in \cite{MatteoMichele,RicciardiConvergence,RicciardiME}. Invariance conditions, which prevent agents from leaving the domain independent of their control strategies, are examined in \cite{MR4045803}.

For first-order MFGs, the situation is more complex due to the lack of a regularizing effect from a second-order term. For the periodic first-order case, $\Omega=\mathbb T^N$, see \cite{Card1order}; for less restrictive assumptions, see \cite{CardGrab}. Unlike second-order cases, where boundary conditions for $u$ and $m$ can be specified (Dirichlet or Neumann), first-order MFGs require different boundary conditions because using those from second-order models leads to overdetermined systems. As a result, either $u$ or $m$ must be prescribed at the boundary. This issue is discussed in the stationary first-order MFG studied in \cite{alharbi2023firstorder}, where the authors introduce more general mixed boundary conditions. In that work, the coercivity of the Hamiltonian allows for compactness estimates of the value function.

Building on these foundations, this paper aims to address the lack of results for first-order MFGs with Neumann boundary conditions in a time-dependent setting. 
A fundamental difficulty  is the absence of a compactness result due to the term $-u_t$ that precludes using the methods
developed for the stationary case. 
To our knowledge, this is the first work to examine this problem rigorously.

For instance, consider crowd entry into venues like open-air concerts or stadiums. In these situations, agents enter the domain (the venue) at a specific rate, which can be modeled using Neumann boundary conditions to represent the inflow of population density. By adjusting these boundary conditions, we can model various real-world crowd scenarios, such as managing crowd control or optimizing the flow of people into a stadium. In particular,  
we examine the following problem.

\begin{problem}\label{problem_mfg}
	Let $\Omega\subset\R^N$ be an open bounded domain with a Lipschitz boundary $\partial\Omega$. Let $T>0$ denote the terminal time of the game, and let $Q_T=[0,T]\times\Omega$. Consider the \emph{parabolic boundary} $\partial_p Q_T=(\{0,T\}\times\Omega)\cup([0,T]\times\partial\Omega)$.
	
	Let $f:[0,T]\times\Omega\times\R_0^+\to\R$, {$H:\Omega\times\R^N\to\R$}, $\psi:\Omega\to\R$, $j:[0,T]\times\partial\Omega\to\R$ be smooth functions. Let $\nu$ be the outward normal at the boundary $\partial\Omega$.
	
	Find $u\in L^1([0,T];W^{1,1}(\Omega))$ and $m\in L^1([0,T]\times\Omega)$ such that $(u,m)$ solves 
	\begin{equation}\label{mfg}
		\begin{cases}
			-u_t+ {H(x,\nabla u)}=f(t,x,m), & (t,x)\in(0,T)\times\Omega,\\
			m_t-\mathrm{div}(m {H_p(x,\nabla u)})=0, & (t,x)\in(0,T)\times\Omega,\\
			m(0,x)=m_0(x),\qquad u(T,x)=\psi(x), & x\in\Omega,\\
			m {H_p(x,\nabla u)}\cdot\nu(x)=j(t,x), & (t,x)\in(0,T)\times \partial\Omega,\\
			m(t,x)\ge0, & (t,x)\in(0,T)\times\Omega\text{ a.e.}.
		\end{cases}
	\end{equation}
\end{problem}

The Neumann condition plays a dual role in our model, determining the flow of population density, $m$, at the boundary $\partial \Omega$, while also acting as a state constraint for the value function $u$ in the Hamilton--Jacobi equation. Assumption \ref{hp3} (detailed in Section \ref{sec2}) imposes a strictly positive current, $j$, ensuring a positive inflow rate at every boundary point. This condition manifests as a negative normal component on the agents' velocity:
$$
\bdx=-H_p(\bx,\nabla u(t,\bx)).
$$
By restricting the normal velocity of agents at the boundary, the Neumann condition influences both the distribution of the population and the agents' paths.  In this framework, only entry into the domain $\Omega$ is permitted, with exit being prohibited.  
Hence, prescribing a Dirichlet condition for the value function is unnecessary.

In the preceding problem, we assumed smoothness on all data. It is clear from the proof that much less regularity is needed, but assuming smoothness allows us to focus only on the main technical aspects. In the various assumptions, however, we detail which smoothness properties are critical (c.f. Assumption \ref{hp} in Section \ref{sec2}).

The following theorem asserts the existence and uniqueness of solutions for the Problem \ref{problem_mfg}.
The proof uses a variational formulation that we develop
in this paper, see Problems \ref{var1} and \ref{var2}. 

\begin{teo}\label{thm:main}
	Consider the setting of Problem \ref{problem_mfg}.
	Suppose Assumptions \ref{hp}, \ref{hp2}, \ref{hp3} and \ref{hp4} hold. 
	Then Problem \ref{problem_mfg} has a solution $(u,m)\in L^1([0,T];W^{1,1}(\Omega))\times L^1([0,T]\times\Omega)$ in the sense of Definition \ref{defmfg}. 
	Moreover, if $(u_1,m_1)$ and $(u_2,m_2)$ are two solutions of
	Problem \ref{problem_mfg}, then $m_1=m_2$ a.e., and $u_1=u_2$ a.e.
	in the set $\{m_1>0\}$.
\end{teo}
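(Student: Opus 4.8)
The plan is to follow the by-now classical variational route to MFG existence, adapted to the Neumann setting, exploiting the duality between the Hamilton--Jacobi and the Fokker--Planck equations. The starting point is the relaxed variational problem mentioned in the abstract: one minimizes an energy of the form
\[
\mathcal A(m,w) = \int_0^T\!\!\int_\Omega L\!\left(\tfrac{w}{m}\right) m\,dx\,dt + \int_0^T\!\!\int_\Omega F(t,x,m)\,dx\,dt - \int_\Omega \psi(x)\,m(0,x)\,dx
\]
over pairs $(m,w)$ satisfying the continuity equation $m_t + \operatorname{div} w = 0$ with $m(0,\cdot)=m_0$ and the Neumann flux condition $w\cdot\nu = j$ on $[0,T]\times\partial\Omega$, where $L=H^*$ is the Legendre transform of $H$ and $F(t,x,m)=\int_0^m f(t,x,s)\,ds$ is the primitive ensuring the monotone coupling. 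Coercivity of $L$ (from the assumptions on $H$) together with convexity of $F$ in $m$ gives, via the direct method, a minimizer $(\bar m,\bar w)$; the inflow term $j$ enters as an affine boundary contribution and is handled by a lifting/trace argument so the admissible set is nonempty and convex.

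**Second**, I would set up the Fenchel--Rockafellar duality exactly as announced. The dual of the relaxed problem is (a relaxation of) the functional
\[
-\int_\Omega u(0,x)\,m_0(x)\,dx + \int_0^T\!\!\int_{\partial\Omega} u(t,x)\,j(t,x)\,d\sigma\,dt - \int_0^T\!\!\int_\Omega F^*\big(t,x,-u_t+H(\nabla u)\big)\,dx\,dt
\]
maximized over $u$ with $u(T,\cdot)=\psi$; the boundary term is precisely where the Neumann data couples to the value function, and it is what replaces, in the first-order setting, the Dirichlet datum one would otherwise impose on $u$. One checks the constraint qualification (existence of an admissible $u$ with the relevant functional finite — here smoothness of the data is used) so that there is no duality gap and the two extremal values coincide. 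The optimality conditions relating a primal minimizer $(\bar m,\bar w)$ and a dual maximizer $\bar u$ then read $\bar w = -\bar m\,H_p(\nabla \bar u)$ on $\{\bar m>0\}$ and $-\bar u_t + H(\nabla\bar u) = f(t,x,\bar m)$ where $\bar m>0$, which is exactly the statement that $(\bar u,\bar m)$ solves \eqref{mfg} in the weak sense of Definition~\ref{defmfg}; the continuity equation with the flux condition is inherited directly from admissibility of $(\bar m,\bar w)$.

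**For uniqueness**, I would use the standard Lasry--Lions monotonicity argument: given two solutions $(u_1,m_1)$ and $(u_2,m_2)$, subtract the two Hamilton--Jacobi equations, subtract the two Fokker--Planck equations, multiply crosswise by $m_2-m_1$ and $u_1-u_2$ respectively, and integrate over $Q_T$. The boundary terms on $[0,T]\times\partial\Omega$ cancel because both pairs satisfy the \emph{same} prescribed flux $mH_p(\nabla u)\cdot\nu=j$, and the terminal/initial terms cancel because $u_i(T,\cdot)=\psi$ and $m_i(0,\cdot)=m_0$ are common. What survives is
\[
\int_0^T\!\!\int_\Omega \big(f(t,x,m_1)-f(t,x,m_2)\big)(m_1-m_2)\,dx\,dt + \int_0^T\!\!\int_\Omega \big(m_1 G(\nabla u_1,\nabla u_2) + m_2 G(\nabla u_2,\nabla u_1)\big)\,dx\,dt \le 0,
\]
where $G(p,q)=H(q)-H(p)-H_p(p)\cdot(q-p)\ge 0$ by convexity of $H$ and the first integrand is $\ge 0$ by strict monotonicity of $f$ in $m$ (Assumption~\ref{hp2}). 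Both terms being nonnegative forces $f(\cdot,\cdot,m_1)=f(\cdot,\cdot,m_2)$, hence $m_1=m_2$ a.e.\ by strict monotonicity, and forces $m_i G(\nabla u_i,\nabla u_j)=0$, which by strict convexity of $H$ gives $\nabla u_1=\nabla u_2$ a.e.\ on $\{m_1>0\}$; combined with the common terminal condition and the Hamilton--Jacobi equation on that set one upgrades this to $u_1=u_2$ a.e.\ on $\{m_1>0\}$.

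**The main obstacle** I anticipate is not the duality or the uniqueness algebra but the passage from the \emph{relaxed} minimizer to an \emph{actual} weak solution with the regularity claimed — that is, showing the dual maximizer $\bar u$ genuinely lies in $L^1([0,T];W^{1,1}(\Omega))$ and that $\bar m\in L^1(Q_T)$ with $\bar w \ll \bar m$, so that the quotient $\bar w/\bar m$ and hence $H_p(\nabla \bar u)$ make sense on $\{\bar m>0\}$, and the Neumann/flux condition holds in the appropriate trace sense. This is exactly the point where first-order problems are delicate (there is no smoothing from a Laplacian, and the term $-u_t$ gives no compactness), so one must extract the needed integrability purely from coercivity of $L$, the growth of $F^*$, and a careful analysis of the optimality system — likely via an approximation by second-order (vanishing-viscosity) problems with the corresponding Neumann data, uniform estimates, and a limit argument, or via the regularity theory for the relaxed problem itself.
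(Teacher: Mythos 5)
Your existence outline follows essentially the same route as the paper: minimize the convex energy $\mathcal M(m,w)$ over the Neumann-constrained continuity equation, establish Fenchel--Rockafellar duality with a dual problem over value functions $u$ with $u(T)=\psi$, relax the dual problem to gain compactness (the paper's Problem~\ref{var3}), and read the MFG system off the optimality conditions. You correctly identify that the hard technical point is extracting integrability for the relaxed dual variable; the paper resolves it via a reflection--mollification argument (Lemma~\ref{lem:regularization}, which is the reason for the rectangular-domain and even-Hamiltonian hypotheses in Assumption~\ref{hp2}), a partial H\"older estimate in time (Lemma~\ref{lem:holder}), and a compactness argument on the boundary trace (Proposition~\ref{pro:ex_relax}), rather than the vanishing-viscosity route you float as an alternative. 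One small inaccuracy: the $\psi$ contribution should pair with $m(T)$, not $m(0)$; the latter is prescribed and contributes only a constant.

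Your uniqueness argument takes a genuinely different route from the paper, and it has a gap at the final step. The paper does not run the Lasry--Lions cross-integration; instead it shows (Proposition~\ref{reverse}) that every weak solution of Problem~\ref{problem_mfg} furnishes minimizers of the two variational problems, then deduces $m_1=m_2$ from the strict convexity of $\mathcal M$ (Theorem~\ref{duality}) and $u_1=u_2$ on $\{m>0\}$ from a dedicated uniqueness theorem for the relaxed dual problem (Theorem~\ref{uniq_var3}). That theorem is the delicate part: it needs the intermediate-time duality identity \eqref{gen_dual_eq} (Lemma~\ref{lem81}, which is where Assumption~\ref{hp4} enters) and the trick of showing that $z=u\vee v$ is again an admissible minimizer, so that the identity $\int(u-v)\,m(t,dx)=0$ for $u\ge v$ can be exploited at a.e.\ time $t$. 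Your Lasry--Lions argument does recover $m_1=m_2$ and $\nabla u_1=\nabla u_2$ a.e.\ on $\{m_1>0\}$, but the asserted upgrade to $u_1=u_2$ on that set is not justified: the set $\{m_1>0\}$ need not be connected nor touch $\{t=T\}$ in a way that lets you integrate the terminal condition backwards, and in Definition~\ref{defmfg} only $u_t^{ac}$ is pinned down on $\{m>0\}$, so the singular parts of $u_{1,t}$ and $u_{2,t}$ could a priori differ. Some additional mechanism is needed to propagate the identification, and that is exactly what the paper's $u\vee v$ construction together with Lemma~\ref{lem81} supplies. (Also a minor citation slip: strict monotonicity of $f$ is Item~\ref{item3} of Assumption~\ref{hp}, not Assumption~\ref{hp2}.)
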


Section \ref{sec2} outlines the assumptions required for the preceding theorem and the 
precise definition of solution. 
These assumptions include growth and coercivity assumptions (Assumption \ref{hp}), positivity conditions
for $m_0$ and $j$ (Assumption \ref{hp2}), and additional technical conditions on the domain and $H$ (Assumption \ref{hp3}). In particular, Assumption \ref{hp3} requires $\Omega$ to be a rectangular domain. This requirement plays a crucial role in the proof of Lemma \ref{lem:regularization}; 
all other results are valid if $\Omega$ is a bounded open set with a Lipschitz boundary $\partial\Omega$.

The proof is based on a variational formulation developed in Section \ref{sec3}, where we examine corresponding dual problems and explicitly connect them to the MFG system’s Euler-Lagrange optimality conditions.
To formulate these variational problems, we define
\begin{equation}\label{def:F}
	F(t,x,m):=\left\{\begin{array}{lr}
		\displaystyle\int_0^m f(t,x,s)ds & \text{if }m\ge0,\\
		+\infty & \text{if }m<0.
	\end{array}\right.
\end{equation}
The convex conjugate of  $F$ with respect to the last argument,  $F^*:[0,T]\times\Omega\times\R\to\R\cup\{+\infty\}$, is
\begin{equation}
	\label{legtransF}
	F^*(t,x,s)=\sup\limits_{m\geq 0}\{sm-F(t,x,m)\}.
\end{equation}
Similarly, for the Hamiltonian,  {$H:\Omega\times\R^N\to\R$}, we define  {$H^*:\Omega\times\R^N\to\R\cup\{+\infty\}$} as 
\begin{equation}
	\label{legtransH}
	{H^*(x,p^*)=\sup\limits_{p\in\R^N}\{p\cdot p^*-H(x,p)\}};
\end{equation}
the Lagrangian,  {$L:\Omega\times\R^N\to\R\cup\{+\infty\}$, is $L(x,p^*):=H^*(x,-p^*)$}.
The two following variational problems below are examined in 
Section \ref{sec3}. 
\begin{problem}\label{var1}
	Consider the setting of Problem \ref{problem_mfg}. 
	Let 
	\begin{equation}\label{eq:Iset}
		\mathscr I:=\{v\in\mathcal{C}^1(Q_T)| v(T,x)=\psi(x)\}.
	\end{equation}
	Find $v\in\mathscr I$ minimizing the functional
	$$
	\mathcal I(v):=\intif F^*(t,x,-v_t+ {H(x,\nabla v)})dxdt-\into v(0,x)m_0(x) dx-\int_0^T\int_{\partial\Omega}jvdx dt.
	$$
\end{problem}
As shown in Proposition \ref{pro_1promfg}, the optimality conditions of the previous problem are equivalent 
to \eqref{mfg}. The dual problem to Problem \ref{var1} is as follows.
\begin{problem}\label{var2}
	Consider the setting of Problem \ref{problem_mfg}. 
	Let $\mathscr M\subset L^1([0,T]\times\Omega;\R)\times L^1([0,T]\times\Omega;\R^N)$ be the set of all pairs $(m,w)$ such that $m\in L^1([0,T]\times\Omega;\R)$, $w\in L^1([0,T]\times\Omega;\R^N)$, $m\ge 0$, and $(m,w)$ solves, in a distributional sense,
	\begin{equation}\label{eq:fp}
		\begin{cases}
			m_t+\mathrm{div}(w)=0, & (t,x)\in(0,T)\times\Omega,\\
			m(0,x)=m_0(x), & x\in\Omega,\\
			-w(t,x)\cdot\nu(x)=j(t,x), & (t,x)\in(0,T)\times \partial\Omega; \\
		\end{cases}
	\end{equation}
	that is, 
	for all $\xi\in W^{1,\infty}(Q_T)$ with $\xi(T,\cdot)=0$, we have
	\begin{equation}\label{either}
		\intif (m\xi_t+w\cdot\nabla\xi)dxdt+\into \xi(0,x) m_0(x)dx+\int_0^T\int_{\partial\Omega} j\xi dx dt=0.
	\end{equation}
	
	Find $(m,w)\in\mathscr M$ minimizing the functional
	\begin{align}
		\mathcal M(m,w):=&\intif \left[m {L\left(x,\frac wm\right)}+F(t,x,m)+w\cdot\nabla \psi(x)\right]dxdt\notag\\
		+&\into \psi(x)m_0(x)dx+\int_0^T\int_{\partial\Omega}\psi(x)j(t,x)dx dt\label{def:M},
	\end{align}
	where we adopt the following convention: if $m(t,x)=0$ for some $(t,x)\in Q_T$, then
	$$
	m(t,x) {H^*\left(x,\frac {w(t,x)}{m(t,x)}\right)}=\begin{cases}
		+\infty &\hbox{ if $w\neq 0$},\\
		0 &\hbox{ if $w=0$}.
	\end{cases}
	$$
\end{problem}

The  Fenchel-Rockafellar duality theorem 
aids in understanding the duality between Problems \ref{var1} and \ref{var2}.
These duality relations have been explored previously in the context of 
Hamilton--Jacobi equations, see \cite{EGom3}, \cite{MR2883292},\cite{MR2354987},  \cite{gomes2020large}, for example.
As detailed in Section \ref{sec4}, 
we build on the ideas developed in \cite{Card1order} to establish the duality result 
in Theorem \ref{duality}. According to this theorem,  Problem \ref{var2} has a minimum $(m,w)$ with $m\in L^q$ for some $p>1$. This is not necessarily the case for Problem \ref{var1}; due to the non-coercivity of $-u_t+ {H(x,\nabla u)}$, this problem may not admit a minimum in the set $u\in\mathcal C^1([0,T]\times\overline{\Omega})$. 
To address the potential failure of Problem \ref{var1} to have a solution, we introduce a relaxed version in Section \ref{sec5}, where we expand the admissible set to preserve duality with Problem \ref{var2}, allowing us to recover existence results and establish a connection between the original and relaxed problems.
A similar idea was used in \cite{Card1order} and \cite{CardGrab} for coercive Hamiltonian $H$ and cost function $f$, in \cite{Graber} for linearly bounded Hamiltonians, \cite{Tono2019} for the planning problem. 
The problems in all of these references were studied under periodic boundary conditions.
Here,
the Neumann conditions present the main technical difficulty.

By approximating subsolutions of the Hamilton--Jacobi equation with smooth functions, we establish the duality between Problem \ref{var3} and Problem \ref{var2}, see Theorem \ref{thm:duality}.
Next, in Section \ref{sec6},  we show the existence of a minimizer for Problem \ref{var3}.
We do this by considering a minimizing
sequence $v^\eps$ for Problem \ref{var2} and establishing
its convergence within the enlarged space $\mathscr K$ to the minimum. 
For this, we need a careful study of the convergence at the boundary to ensure the convergence of the boundary integral in $\mathcal I$.
Next, in Section \ref{sec7}, 
we show that minima of Problems  \ref{var3} and \ref{var2} provide solutions of Problem \ref{problem_mfg}. Moreover, any solution to Problem \ref{problem_mfg} yields minimizers 
for Problems  \ref{var3} and \ref{var2}. Lastly, Section \ref{sec8} establishes the
uniqueness of solutions to Problem \ref{problem_mfg} and ends the proof of Theorem \ref{thm:main}.	 	 

To summarize, this paper makes several key contributions to MFG theory. We provide the first rigorous examination of first-order MFGs with Neumann boundary conditions in a time-dependent setting, addressing a significant gap in the literature. To overcome the lack of compactness results typically used in stationary cases, we develop a variational formulation and introduce a relaxed version that preserves duality with the dual problem. Then, we establish the existence and uniqueness of the solutions to this MFG system. Our approach extends the application of MFG theory to more realistic scenarios in crowd modeling, where boundary conditions play a crucial role in determining agent behavior at domain edges. These advancements contribute to the theoretical foundations of MFG theory and provide a framework for modeling a more comprehensive range of real-world phenomena, particularly in the context of crowd dynamics and population flow.



\section{Preliminary material and Main Assumptions}\label{sec2}

This section recalls essential properties of the convex conjugate, which are fundamental for our subsequent analysis. We refer the reader to \cite{Ekeland} for additional details. These properties are frequently used throughout the paper as we often work with the convex conjugates of the Hamiltonian $H$ and the primitive $F$ of the coupling function $f$.
We also present the main assumptions for our analysis. These include standard growth and coercivity conditions (Assumption \ref{hp}) and conditions on the initial data and agent flow rate (Assumption \ref{hp2}). Next, we present a technical assumption that simplifies the study of Neumann boundary conditions (Assumption \ref{hp3}).  Finally, we provide the definition of a weak solution for Problem \ref{problem_mfg}.

\subsection{Preliminary results about the convex conjugate}
Let $A\subset\R^N$ be a convex set and $\Phi:A\to\R$ be a lower semicontinuous and convex function.
We define the convex conjugate of $\Phi$ as in the Introduction, i.e., $\Phi^*:\R^N\to\R\cup\{+\infty\}$ is given by
$$
\Phi^*(p):=\sup\limits_{x\in A}\{x\cdot p-\Phi(x)\} .
$$
We say that $p$ and $x^*$ are conjugated variables if 
$\Phi^*(p):=x^*\cdot p-\Phi(x^*)$ .
The convexity and lower semicontinuity of $\Phi$ imply that $\Phi^{**}=\Phi$. Moreover, assume that $\Phi$ is differentiable and strictly convex, i.e.
$$
\Phi(\lambda x+(1-\lambda y))< \lambda\Phi(x)+(1-\lambda)\Phi(y) ,\qquad\forall\lambda\in(0,1) ,\quad\forall x,y\in A\mbox{ s.t. }x\neq y .
$$
Then, we have that $\Phi^*$ is differentiable and
	$$
	\Phi^*_p(p)=\arg\max\limits_{x\in A}\{\langle x,p\rangle-\Phi(x)\}. 
$$
Moreover, $\Phi^*_p(p)$ and $p$ are conjugated variables and
	\begin{equation}\label{hp:CC}
		\Phi(\Phi^*_p(p))+\Phi^*(p)=\Phi^*_p(p)\cdot p , \qquad \Phi^*_p(\cdot)=\Phi_x^{-1}(\cdot) .
	\end{equation}

Furthermore, we note that strict convexity of $\Phi$ does not guarantee the same for the conjugate $\Phi^*$.
Consider, for instance, the function
$$
\Phi(x)=\begin{cases}
	x^2 & x<1 ,\\
	x^3 & x\ge 1 ,
\end{cases}
$$
and its conjugate
$$
\Phi^*(p)=\begin{cases}
	\frac{p^2}4 & p<2 ,\\
	p-1 & 2\le p<3 ,\\
	\frac{2\sqrt 3}9 p\sqrt p & p\ge 3 .
\end{cases}
$$
Although $\Phi $ is strictly convex (but not $\mathcal C^1$), its conjugate,  $\Phi^*$, is not strictly convex in the region  $2\le p<3$. 
The following lemma shows that imposing additional $C^1$ regularity for $\Phi$ gives the strict convexity of its conjugate. 

\begin{lemma}\label{lem:strictHstar}
	Let $N\ge1$ and assume that $\Phi:\R^N\to\R\cup\{+\infty\}$ is a strictly convex and differentiable function. Then $\Phi^*:\R^N\to\R\cup\{+\infty\}$ is a strictly convex function.
\end{lemma}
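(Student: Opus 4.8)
The plan is to argue by contradiction using the fact that $\Phi^* = \Phi^{**\ast}$... actually, more directly, I would exploit the variational characterization of $\Phi^*$ together with the differentiability of $\Phi$, which forces the supremum defining $\Phi^*$ to be attained at a unique point whenever it is finite.

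First I would recall that since $\Phi$ is differentiable and strictly convex, the map $p \mapsto \arg\max_{x}\{\langle x,p\rangle - \Phi(x)\}$ is single-valued (on the interior of $\mathrm{dom}\,\Phi^*$), and coincides with $(\Phi')^{-1}$; this is essentially the content of \eqref{hp:CC}. Now suppose, for contradiction, that $\Phi^*$ fails to be strictly convex. Then there exist $p_0 \neq p_1$ in $\mathrm{dom}\,\Phi^*$ and $\lambda \in (0,1)$ with $\Phi^*(p_\lambda) = \lambda \Phi^*(p_0) + (1-\lambda)\Phi^*(p_1)$, where $p_\lambda = \lambda p_0 + (1-\lambda) p_1$; by convexity of $\Phi^*$ this equality then propagates to the whole segment $[p_0,p_1]$, so $\Phi^*$ is affine there. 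Pick the maximizer $x^* := \Phi^*_p(p_\lambda)$ for $p_\lambda$, so that $\Phi^*(p_\lambda) = \langle x^*, p_\lambda\rangle - \Phi(x^*)$.

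The key step is then: the same $x^*$ is a maximizer for \emph{every} $p$ on the segment. Indeed, for any $p$ on the segment write $p = p_\lambda + t(p_1 - p_0)$; since $\Phi^*$ is affine along the segment, $\Phi^*(p) = \Phi^*(p_\lambda) + t\,\langle p_1 - p_0, \nabla\rangle$-direction, and one checks that $\langle x^*, p\rangle - \Phi(x^*)$ has exactly this affine form and touches $\Phi^*$ at $p_\lambda$; since $\Phi^*(p) \ge \langle x^*,p\rangle - \Phi(x^*)$ always, and the two affine functions of $t$ agree at $t$ corresponding to $p_\lambda$ and have the same slope $\langle x^*, p_1-p_0\rangle$ forced by the supporting-line inequality at nearby points, they coincide on the segment. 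Hence $x^*$ maximizes $\langle x, p\rangle - \Phi(x)$ for two distinct values $p_0, p_1$. But the maximizer of $\langle x,p\rangle - \Phi(x)$ satisfies the first-order condition $\nabla\Phi(x^*) = p$ (using differentiability of $\Phi$), so we would get $\nabla\Phi(x^*) = p_0$ and $\nabla\Phi(x^*) = p_1$ simultaneously, contradicting $p_0 \neq p_1$.

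The main obstacle I anticipate is the bookkeeping at the boundary of $\mathrm{dom}\,\Phi^*$ and handling the case where $x^*$ lies where $\Phi = +\infty$ or where the supremum is attained "at infinity" (not attained); one must either restrict attention to the interior of $\mathrm{dom}\,\Phi^*$, where affineness on a subsegment already yields the contradiction, or argue that non-strict convexity anywhere produces a genuine affine piece in the interior. A clean way to sidestep subtleties is: strict convexity of $\Phi^*$ is equivalent to $\Phi = (\Phi^*)^*$ being differentiable (essentially smooth), and conversely differentiability of a convex function is the standard dual partner of strict convexity of its conjugate; so the lemma is really the "easy" direction of the classical Asplund–Rockafellar duality between smoothness and strict convexity, and the proof above is just a self-contained instance of it. I would present the contradiction argument explicitly as above, restricting to interior points to keep it elementary.
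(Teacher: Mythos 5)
Your proposal is correct and takes a genuinely different route from the paper. The paper's proof is a compact monotonicity argument: strict convexity of $\Phi$ makes $\nabla\Phi$ strictly monotone; inversion preserves strict monotonicity; differentiability of $\Phi$ combined with strict convexity yields $\nabla\Phi^*=(\nabla\Phi)^{-1}$; a convex function with a strictly monotone gradient is strictly convex. Your proof instead argues by contradiction directly from the Fenchel--Young inequality: if $\Phi^*$ were affine on a nondegenerate segment, a maximizer $x^*$ chosen at an interior point of that segment gives an affine minorant $p\mapsto\langle x^*,p\rangle-\Phi(x^*)$ touching $\Phi^*$ at that point, which forces the two affine functions to coincide on the segment, so $x^*$ maximizes for every $p$ on the segment; the first-order condition $\nabla\Phi(x^*)=p$ then yields $p_0=p_1$, a contradiction.

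Both arguments are sound. The paper's is shorter but leans on two nontrivial facts about conjugation (that $\partial\Phi^*=(\partial\Phi)^{-1}$ becomes a genuine gradient under the stated hypotheses, and that strictly monotone gradients give strict convexity), which it leaves implicit. Your version is self-contained, uses only the definition of $\Phi^*$ and the supporting-hyperplane picture, and correctly isolates the technical subtleties (attainment of the supremum, boundary of $\operatorname{dom}\Phi^*$), which you resolve by working in the interior where the subdifferential is nonempty. A noteworthy feature of your approach: it reveals that strict convexity of $\Phi$ is in fact not used at all -- only differentiability of $\Phi$ drives the contradiction -- so your argument proves a slightly stronger statement than the lemma as stated (this is the ``easy'' direction of the essential smoothness / strict convexity duality, as you observe). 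Two small quibbles: the sentence containing ``$t\,\langle p_1-p_0,\nabla\rangle$-direction'' is garbled and should be rewritten to say explicitly that the directional derivative of the affine minorant along $p_1-p_0$ must match that of $\Phi^*$ at the interior touching point; and the first-order condition $\nabla\Phi(x^*)=p$ tacitly requires $x^*$ to lie in the interior of $\operatorname{dom}\Phi$, which should be stated since the lemma allows $\Phi$ to take the value $+\infty$.
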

The lemma can be generalized to functions $\Phi:A\to\R^N$ on a convex domain $A\subset\R^N$ by assigning $\Phi(x)=+\infty$ for $x$ outside $A$. 

\begin{proof}
	The proof is straightforward using the preceding results about the convex conjugate. Since $\Phi$ is strictly convex, we have $\nabla \Phi$ strictly monotone, in the sense that
	$$
	\langle\nabla \Phi(x)-\nabla \Phi(y),x-y\rangle>0\qquad\text{for }x, y\in\R^N ,\quad x\neq y .
	$$
	Hence, $\nabla \Phi^{-1}$ is strictly monotone too. Since $\Phi$ is differentiable, we have $\nabla \Phi^*=\nabla \Phi^{-1}$, which ensures the strict convexity of $\Phi^*$.
\end{proof}

\subsection{Notation and Assumptions}
Here, we work in the setting of Problem \ref{problem_mfg}. In particular,  
$\Omega\subset\R^N$ is a bounded Lipschitz domain, $Q_T=[0,T]\times\Omega$, 
and for $t\in(0,T)$, we define $Q_t:=[0,t]\times\Omega$. Furthermore, 
throughout the paper, for $n>1$, $n'$ denotes the conjugate of $n$, given by $n':=\frac n{n-1}$. 

Regarding the data in Problem \ref{problem_mfg}, namely $m_0$ and $j$, as well as the functions 
that determine the MFG, $H$ and $f$, we work under the following assumption. 
\begin{hyp}\label{hp}
	There exist $q>1$, $r>1$, and $C>0$ such that 
	\begin{enumerate}
		\item\label{item1} $m_0\in L^1(\Omega)$, with $m_0\ge0$ almost everywhere, whereas $\psi\in\mathcal C^1(\overline\Omega)$, $j\in\mathcal C([0,T]\times\partial\Omega)$, with $j\ge0$;
		\item\label{item2} {$H:\Omega\times\R^N\to\R$ is continuous in both variables. Moreover, it is strictly convex and differentiable in the last variable, and for all $(x,p)\in\Omega\times\R^N$},
		$$
		C^{-1}|p|^r-C\le {H(x,p)}\le C(|p|^r+1);
		$$

		\item\label{item3} $f:[0,T]\times\Omega\times(0,+\infty)\to\R$ is continuous, strictly increasing in the last argument, and, for all $(t,x,m)\in[0,T]\times\Omega\times[0,+\infty)$,
		$$
		C^{-1}m^{q-1}-C\le f(t,x,m)\le C(m^{q-1}+1);
		$$
	\end{enumerate}
\end{hyp}

The conditions $m_0\geq 0$ and $j\geq 0$ in Item~\ref{item1} are natural given their interpretation: $m_0$ is the initial distribution of agents and $j$ corresponds to the inflow rate into the domain $\Omega$. 
The growth requirements on $f$ and $H$ in Items~\ref{item2} and \ref{item3} are natural in our variational setting. They are essential to obtain the energy estimates necessary for the convergence of the minimizing sequences for the variational problems mentioned in the introduction; see Proposition \ref{pro:ex_relax}. Similar growth conditions are often used in MFGs; see, for example, the conditions for the parabolic case in
\cite{GPM2} or \cite{GPM3}. For weak solutions, for parabolic MFGs, the condition on $f$ can be somewhat relaxed; see \cite{MR3691806}, for example. 
Note that the quadratic Hamiltonian, ${H(x,p)}=\miezz|p|^2$, 
and the coupling $f(m)=m^\alpha$, 
satisfy the preceding assumption.
In the MFGs, the monotonicity of $f$ suggests that players avoid areas of high density. This hypothesis is often used in the literature to obtain the uniqueness of solutions for Problem \ref{problem_mfg}, as is the case here to prove the uniqueness of solutions for Problem \ref{var2}.

Assumption \ref{hp} yields the following estimates on $F$, $F^*$, and $H^*$.

\begin{remark}\label{rem21}
	If Assumption \ref{hp} holds, the function $H^*$ inherits the same regularity of $H$; moreover, for a possibly different constant $C$,
	$$
	C^{-1}|p^*|^{r'}-C\le {H^*(x,p^*)}\le C\left(|p^*|^{r'}+1\right).
	$$
	Similarly, the function $F$ defined previously is continuous, differentiable, and convex in the last argument, and the same applies for $F^*$. Moreover, for all $m\ge 0$ and $s\ge 0$,
	\begin{equation}\label{hp:FFstar}
		C^{-1}m^q-C\le F(t,x,m)\le C(m^q+1) ;\qquad C^{-1}s^{q'}-C\le F^*(t,x,s)\le Cs^{q'}+C.
	\end{equation}
	Due to Lemma \ref{lem:strictHstar},  $H^*$ and $F^*$ are strictly convex because $H$ and $F$ are differentiable.
\end{remark}

Because $H$ and $f$ are bounded from below, we can add a constant to both sides of the Hamilton-Jacobi equation in \eqref{mfg} (e.g., $c_1:=|\inf f|+|\inf H|$) without altering the solution set of the system. 
Hence, by defining $\tilde f(t,x,m)=f(t,x,m)+c_1$, {$\tilde H(x,p)=H(x,p)+c_1$}, we can assume, without loss of generality, that $f$ and $H$ are non-negative. Thus, the primitive function $F$, defined in \eqref{def:F}, is non-decreasing in the variable $m$, for $m>0$.

\begin{remark}\label{rem23}
	Since $F=+\infty$ for $m<0$, the supremum in \eqref{legtransF} must be attained for $m\ge0$. This implies that $F^*$ is non-decreasing in the last variable. 
	Moreover, because $f$ is non-decreasing in the last variable, we have $F(t,x,m)\ge f(t,x,0)m$ for $m\ge0$. In particular, 
	we have $F\ge0$ for $m\geq 0$. Finally,  $F^*(t,x,s)\le\sup\limits_{m\ge0}(s-f(t,x,0))m$.
	Accordingly
	\begin{equation}\label{vanish_Fstar}
		F^*(t,x,s)=0\qquad\text{for }s\le f(t,x,0) 
	\end{equation}
	and, using the fact that $F^*$ is non-decreasing in the last variable, 
	$F^*\ge0$ for $s> f(t,x,0) $
\end{remark}
The preceding property is used in Section \ref{sec7}, Theorem \ref{teo:ex_mfg}  to prove the existence of
solutions for Problem \ref{problem_mfg}.

To prove some of the results in this paper, we require additional assumptions on the coefficients, the boundary data, and the domain structure. 

The following Assumption is of a technical nature as it prescribes
that $\Omega$ must be a rectangular domain and
requires an additional evenness hypothesis on the Hamiltonian $H$.

\begin{hyp}\label{hp2}
	We have
	\begin{enumerate}[start=4]
		\item\label{item5} $\Omega$ is a rectangular domain, i.e., for $1\le i\le N$ there exist $a_i,b_i$ with $a_i<b_i$ and $\Omega=\prod\limits_{i=1}^N(a_i,b_i) $;
		\item\label{item6} {$H:\Omega\times\R^N\to\R$ is even in all variables $p_i$,} i.e., for all $p=(p_j)_j\in\R^N$ and for all $i\in\N$ such that $1\le i\le N$, we have
		$$
		{H(x,p_1,\dots,p_{i-1},p_i,p_{i+1},\dots,p_N)=H(x,p_1,\dots,p_{i-1},-p_i,p_{i+1},\dots,p_N)\,,\qquad\forall\,x\in\Omega.}
		$$
		{\item\label{item6.5} We assume there exists $\theta>0$, $0<\varsigma\le 1$, with $\theta(N+1)\le r\varsigma$ and such that, for a certain constant $C>0$, we have
			$$
			|H(x,\xi)-H(y,\xi)|\le C(1+|\xi|)^\theta|x-y|^\varsigma\,,\qquad\forall\,x,\,y\in\Omega,\,\,\xi\in\R^d\,;
			$$}
		\item\label{item7} the following relation between the growth exponents: $r>N(p-1)$.
	\end{enumerate}
\end{hyp}

The evenness of $H$ in Item~\ref{item6} can be relaxed. For example, our results extend to a Hamiltonian of the form ${H(x,p)=H_1(x,p)+H_0(x,p)}$, 
where $H_1$ is even and satisfies Assumption \ref{hp}, and $H_0\in L^\infty$. 
We do not include this extension here for simplicity and to avoid too complex notation.
We use Items \ref{item5}, \ref{item6}, and \ref{item6.5} in Lemma \ref{lem:regularization} to extend the subsolutions of the Hamilton-Jacobi equation beyond $\Omega$ through reflection. The evenness of $H$ and the rectangular domain ensure the validity of these reflected subsolutions. Item \ref{item6.5} is crucial for demonstrating that we retain control over the approximation error when we approximate subsolutions to the Hamilton-Jacobi-Bellman equation (HJB) by convolution with a smooth kernel.
This hypothesis slightly generalizes the one in  \cite{Card1order}. That hypothesis was no longer required in \cite{CardGrab}, since the approximation by convolution was done in the Fokker-Planck equation.  However, the Neumann boundary condition in our case complicates such an approximation, and our method requires using
Item \ref{item6.5}. The condition on $r$ in Item~\ref{item7} is used to demonstrate a partial H\"older estimate for Problem \ref{var3}, see Lemma \ref{lem:holder} using techniques similar to the ones in \cite{Card1order,CardGrab}.

The following Assumption is on the boundary data $m_0$ and $j$.

\begin{hyp}\label{hp3}
	We have
	\begin{enumerate}[start=7]
		\item\label{item8} The data $m_0$ and $j$ are strictly positive, i.e. $m_0>0$ and $j>0$.
	\end{enumerate}
\end{hyp}
An initial distribution $m_0$ strictly positive was already required in \cite{CardGrab}, allowing us to prove the existence of solutions for the relaxed Problem \ref{var3}. We require the same strict positivity hypothesis on $j$,  which plays the same role in Proposition \ref{pro:ex_relax}. 
A nonvanishing current at the boundary ensures the state constraint condition for the value function $u$. 
When $j>0$, we have ${H_p(x,\nabla u)}\cdot\nu>0$ at $\partial\Omega$. 
Hence, the agent’s velocity points inside the domain when the players are near the boundary.
This is precisely the state constraint that ensures the confinement of the process into $\Omega$. 
See also the second-order case's invariance condition stated in  \cite{BCR,CDF,CCR,MR4045803}.

The last Assumption is a strengthening hypothesis on the growth of the coefficients' exponents.
\begin{hyp}\label{hp4}
	We have
	\begin{enumerate}[start=8]
		\item\label{item9} The following relation between the growth exponents is satisfied: $r>\frac{Nq}{Nq+q-2}$.
	\end{enumerate}
\end{hyp}

Item~\ref{item9} is a technical hypothesis used here  to prove the uniqueness of solutions for the relaxed problem and, consequently, for the Mean Field Game system.
This hypothesis is implied in the bound for $r$ in Item~\ref{item7} of Assumption ~\ref{hp2} if $q>1+\frac{\sqrt{N+1}}{N+1}$.

An example of a MFG satisfying all these Assumptions is determined by 
$$
H(x,p)=c\left(1+|p|^{\frac r2}\right)^2{+\,\varphi(x)} ,\qquad f(t,x,m)=\tilde f(t,x)(1+m^q) ,
$$
where $c>0$, $\tilde f$ is continuous with $\tilde f(t,x)>\eps>0$ for all $(t,x)\in Q_T${, and $\varphi$ is a Lipschitz function.}

\subsection{Definition of solution}

Here, we define weak solutions for Problem \ref{problem_mfg}.
\begin{definition}\label{defmfg}
	A pair $(u,m)\in BV(Q_T)\times L^q(Q_T)$ is a weak solution of Problem \ref{problem_mfg} if:
	\begin{enumerate}
		\item\label{solitem1} $u$ is a subsolution of the Hamilton-Jacobi equation; that is, 
		\begin{equation*}
			\begin{cases}
				-u_t+{H(x,\nabla u)}\le f(t,x,m) ,\\
				u(T,x)=\psi(x) ,
			\end{cases}
		\end{equation*}
		in the sense of distributions (see \eqref{eq:distribution} with $\alpha=f(t,x,m)$), and $u(T,x)=\psi(x)$ in the trace sense;
		\item\label{solitem2} almost everywhere in the set $\{m>0\}$, $u$ satisfies 
		\begin{equation*}
			-u_t^{ac}+{H(x,\nabla u)}=f(t,x,m) ,
		\end{equation*}
		where $u_t^{ac}$ is the absolutely continuous part of the measure $u_t$;
		\item\label{solitem3} $m$ satisfies the Fokker-Planck equation
		\begin{equation*}
			\begin{cases}
				m_t-\mathrm{div}(m{H_p(x,\nabla u)})=0 ,\\
				m(0)=m_0 ,\\
				m{H_p(x,\nabla u)}\cdot\nu(x)_{|x\in\partial\Omega}=j(t,x) ,
			\end{cases}
		\end{equation*}
		in the sense of distribution \eqref{either} with $w=-m{H_p(x,\nabla u)}$;
		\item\label{solitem4} $(1+m)|\nabla u|^r\in L^1(Q_T)$;
		\item\label{solitem5} The following equality holds 
		with $w=-m{H_p(x,\nabla u)}$ and $\alpha=f(t,x,m)$:
		\begin{align}\label{dual_eq}
			&\into u(0) m_0 dx+\intb ju dxdt
			\\\notag&\qquad =\into m(T,x)\psi(x)dx+\intif\left[m{L\left(x,\frac wm\right)}+m\alpha\right] dxdt .
		\end{align}
	\end{enumerate} 
\end{definition}
Note that the condition in Item~\ref{solitem4} implies that ${H(x,\nabla u)}$ is integrable enough 
so that it makes sense to say that the Hamilton-Jacobi equation holds in the sense of distributions
in Item~\ref{solitem1}. Moreover, because  
$$
m{L\left(x,\frac wm\right)}=m{H^*(x,H_p(x,\nabla u))}=m\left({H_p(x,\nabla u)}\nabla u -v c{H(x,\nabla u)}\right)
$$
is bounded by $Cm(1+|\nabla u|^r)$, Item~\ref{solitem5} ensures that
the last integral in Item~\ref{solitem5} is well-defined. Also, we note that classical solutions of the MFG system are solutions in the sense of Definition \ref{defmfg}.

This solution definition is standard in the literature of first-order MFG systems; see  \cite{cgbt,Card1order,CardGrab, Tono2019}. 
The lack of regularity due to the absence of a Laplacian (or an elliptic second-order term) implies that $u$ may not be a solution of the HJB equation in the whole set $\Omega$. The condition in Item~\ref{solitem2} complements the subsolution requirement for $u$ by requiring equality almost everywhere in the set $\{m>0\}$, replacing $u_t$ with its absolutely continuous part.
Because $u$ is not a solution in $\Omega$, the uniqueness of solutions for the Problem \ref{problem_mfg} could be lost in the set $\{m=0\}$. This is reflected in the statement of Theorem \ref{thm:main}, where we assert only partial uniqueness for $u$. 
For the Fokker-Planck equation, we can require a distributional solution, as in Item~\ref{solitem3}, in duality with the Hamilton-Jacobi one. Finally, the condition in Item~\ref{solitem5} can be formally obtained using $u$ as a test function for the equation of $m$.

\section{The variational formulation}\label{sec3}
In this section, we further examine the variational Problems \ref{var1} and \ref{var2}. First, we prove that the solutions of Problem \ref{var1} also solve Problem \ref{problem_mfg} (see Proposition \ref{pro_1promfg}). Next, in Subsection \ref{efp3}, 
we establish preliminary estimates for Problem \ref{var2}. Finally,
we reformulate Problem \ref{var2} to obtain an equivalent formulation that is more suitable for our analysis (see Remark \ref{remark35}).

\subsection{Optimality conditions}

We begin by examining  the Euler-Lagrange equation associated with the functional $\mathcal I(\cdot)$ in Problem \ref{var1}
and  its relation with with \eqref{mfg}.
\begin{pro}\label{pro_1promfg}
Suppose that Assumption \ref{hp} holds, and let $u$ solve Problem \ref{var1}.
Then, there exists $m\in \mathcal C(Q_T)$ such that $(u,m)$ solves 
Problem \ref{problem_mfg}.
\end{pro}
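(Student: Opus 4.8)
The plan is to derive the MFG system as the Euler--Lagrange (optimality) conditions of $\mathcal I$ on the admissible set $\mathscr I$, then identify the Lagrange-multiplier-type object that arises from the first variation with the density $m$. I would start from a minimizer $u \in \mathscr I$ and take variations $u + s\phi$ with $\phi \in \mathcal C^1(Q_T)$ satisfying $\phi(T,\cdot) = 0$, so that $u + s\phi \in \mathscr I$ for all $s \in \mathbb R$. Since $F^*$ is convex, differentiable, and has the polynomial growth recorded in Remark~\ref{rem21}, the functional $\mathcal I$ is differentiable along such variations and $\frac{d}{ds}\big|_{s=0}\mathcal I(u+s\phi) = 0$ yields
\[
\intif (F^*)_s\big(t,x,-u_t+H(\nabla u)\big)\,\big(-\phi_t + H_p(\nabla u)\cdot\nabla\phi\big)\,dx\,dt
= \into \phi(0,x)m_0(x)\,dx + \int_0^T\int_{\partial\Omega} j\phi\,dx\,dt.
\]
Setting $m(t,x) := (F^*)_s\big(t,x,-u_t+H(\nabla u)\big)$, this identity is precisely the weak formulation \eqref{either} of the Fokker--Planck equation with $w = -mH_p(\nabla u)$; thus $m$ solves the third equation of \eqref{mfg} with the initial condition $m(0,\cdot)=m_0$ and the Neumann condition $mH_p(\nabla u)\cdot\nu = j$ built in.

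Next I would recover the Hamilton--Jacobi equation and the nonnegativity and regularity of $m$. By the properties of the convex conjugate from Section~\ref{sec2} (the relation $\Phi_x^{-1} = \Phi^*_p$ applied to $F(t,x,\cdot)$), the identity $m = (F^*)_s(t,x,-u_t+H(\nabla u))$ is equivalent to $-u_t + H(\nabla u) = f(t,x,m)$ wherever $m > 0$, and since $F^*$ is nondecreasing in its last argument (Remark~\ref{rem23}) with $(F^*)_s \ge 0$, we get $m \ge 0$ automatically; in fact, because $F^*(t,x,s) = 0$ for $s \le f(t,x,0)$, one has to check that the subsolution inequality $-u_t + H(\nabla u) \le f(t,x,m)$ holds everywhere (on $\{m=0\}$ it reads $-u_t+H(\nabla u) \le f(t,x,0)$, which is exactly the region where $(F^*)_s$ vanishes). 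The terminal condition $u(T,\cdot) = \psi$ holds by membership in $\mathscr I$. Finally, $m \in \mathcal C(Q_T)$ follows because $u$ is smooth ($u \in \mathcal C^1(Q_T)$ as an element of $\mathscr I$) and $(F^*)_s$ is continuous in all arguments, so $m$ is a continuous function of $(t,x)$; the pairing identity \eqref{dual_eq} of Definition~\ref{defmfg} is obtained by using $u$ itself as test function in \eqref{either} (legitimate after checking $u - \psi$ is an admissible test function, since $u(T,\cdot)=\psi$) and then substituting $mH^*(w/m) = m(H_p(\nabla u)\cdot\nabla u - H(\nabla u))$ together with $-u_t + H(\nabla u) = f(t,x,m)$ on $\{m>0\}$.

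The main obstacle I anticipate is \emph{justifying differentiation under the integral sign} and making the first-variation computation rigorous despite the fact that $F^*$ is only known to be $\mathcal C^1$ with power growth: one must produce a uniform (in $s$, locally) integrable dominating function for the difference quotients, which uses the growth bound $F^*(t,x,s) \le Cs^{q'}+C$ from \eqref{hp:FFstar}, the bound $|(F^*)_s| \le C(|s|^{q'-1}+1)$ it implies, and the fact that $-u_t + H(\nabla u)$ is bounded on the compact set $\overline{Q_T}$ for a fixed smooth competitor $u$. A secondary technical point is the treatment of the boundary term: because $\Omega$ has only a Lipschitz boundary, one must make sense of the trace of $\phi$ on $[0,T]\times\partial\Omega$ and verify the boundary integral in the variation is well-defined and matches the Neumann data --- this is routine for $\phi \in \mathcal C^1(Q_T)$ but should be stated carefully. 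Once differentiation under the integral is in place, the remaining steps are algebraic manipulations with the Legendre transform and direct verification of items~\ref{solitem1}--\ref{solitem5} of Definition~\ref{defmfg}.
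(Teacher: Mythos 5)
Your proposal is correct and takes essentially the same approach as the paper: compute the first variation of $\mathcal I$ along admissible perturbations, set $m := F^*_s(t,x,-u_t+H(\nabla u))$, and invert via Legendre duality to recover the Hamilton--Jacobi and Fokker--Planck equations with the Neumann data built in. You are in fact slightly more thorough than the paper's own proof, which writes $-u_t + H(\nabla u) = f(t,x,m)$ as an exact equality without addressing the flat region of $F^*_s$ on $\{m=0\}$ (where only the subsolution inequality can be recovered) and does not explicitly verify the pairing identity of Definition~\ref{defmfg} or flag the dominated-convergence step behind differentiating under the integral.
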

\begin{proof}
Because $u$ solves Problem \ref{var1},  $u\in\mathscr I$ and it is a minimizer of  $\mathcal I(\cdot)$. 
For $\eps\in\R$, consider
the function $z:=u+\eps v\in\mathscr I$, with $v\in\mathcal C^\infty(Q_T)$ and $v(T)=0$. 
Because $u$ is a minimizer, $\mathcal I(u)\le \mathcal I(z)$ for all $z\in\mathscr I$.
Accordingly, the function $\varepsilon \mapsto \mathcal I(u+\eps v)$ has a minimum in $\eps=0$. Thus, we have
\begin{align*}
\mathcal I(u+\eps v)&=\intif F^*(t,x,-u_t-\eps v_t+{H(x,\nabla u+\eps \nabla v)})dxdt\\&-\into \big(u(0,x)+\eps v(0,x)\big)m_0(x)\,dx-\int_0^T\int_{\partial\Omega}j(u+\eps v)dx dt.
\end{align*}
Differentiating with respect to $\eps$ and evaluating at $\eps=0$, we find
\begin{align*}
&\intif F^*_s(t,x,-u_t+{H(x,\nabla u)})(-v_t+{H_p(x,\nabla u)}\cdot \nabla v)dxdt\\
-&\into v(0,x)m_0(x)\,dx-\int_0^T\int_{\partial\Omega}jvdx dt=0\,.
\end{align*}
We now define
\begin{equation}\label{eq:m}
m(t,x):=F_s^*(t,x,-u_t+{H(x,\nabla u)})\,.
\end{equation}
Using \eqref{hp:CC}, we apply $f$ to both sides of \eqref{eq:m}.  Because $u\in\mathscr I$, we get
$$
\begin{cases}
-u_t+{H(x,\nabla u)}=f(t,x,m), & (t,x)\in (0,T)\times\Omega\,,\\
u(T,x)=\psi(x), & x\in\Omega\,.
\end{cases}
$$
This implies
$$
\intif m(t,x)(-v_t+{H_p(x,\nabla u)}\cdot \nabla v)\,dxdt=\into v(0,x)\,m_0(x)\,dx+\int_0^T\int_{\partial\Omega}jv\,dx dt
$$
for all $v\in\mathcal C^\infty(Q_T)$ with $v(T,\cdot)=0$; that is, $m$ is a distributional solution to
$$
\begin{cases}
m_t-\mathrm{div}(m {H_p(x,\nabla u)})=0\,,\\
m(0)=m_0\,,\\
m{H_p(x,\nabla u)}\cdot\nu_{|x\in\partial\Omega}=j\,.
\end{cases}
$$
Accordingly, $(u,m)$ solves \eqref{mfg}.
\end{proof}

Unfortunately, 
the existence of minimizers for $\mathcal I(\cdot)$ in $\mathscr I$ is not guaranteed due to 
 insufficient regularity in the first-order case and the lack of coercivity.
To address this matter, in Section \ref{sec5}, we introduce a relaxation of Problem \ref{var1} for which the existence of a minimizer can be proven.

\subsection{Estimates for  Problem \ref{var2}}
\label{efp3}

Now, we focus on Problem \ref{var2}. Using \eqref{either} with $\xi=T-t$, we derive
the following  $L^1$ bound on the density, $m$, of a solution of Problem \ref{var2}:
\begin{equation}\label{stimamL1}
\|m\|_{L^1(Q_T)}\le T\left(\|m_0\|_{L^1}+\|j\|_{L^1}\right).
\end{equation}

The following propositions provide additional integrability estimates for solutions of Problem \ref{var2}.
\begin{pro}\label{prop_regM}
Consider the setting of Problem \ref{var2}.
Let $(m,w)\in\mathscr M$, and suppose that Assumption \ref{hp} holds.

If ${\mathcal M(m,w)<+\infty}$, we have $m\in L^q(Q_T)$ and $w\in L^\beta(Q_T)$, with $\beta=\frac{qr}{qr-q+1}$.
\end{pro}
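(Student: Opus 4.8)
The plan is to exploit the finiteness of $\mathcal M(m,w)$ together with the growth bounds from Remark \ref{rem21} to get the $L^q$ bound on $m$ directly, and then to obtain the $L^\beta$ bound on $w$ by a Hölder-interpolation argument that balances the $mL(w/m)$ term against the $L^q$ norm of $m$. First I would isolate the relevant terms in $\mathcal M(m,w)$: the boundary integral $\int_0^T\int_{\partial\Omega}\psi j\,dx\,dt$ and the term $\int_0^T\!\!\into \psi m_0\,dx$ are finite constants depending only on the data (using $\psi\in\mathcal C^1(\overline\Omega)$, $j$ continuous, $m_0\in L^1$), and the term $\intif w\cdot\nabla\psi\,dx\,dt$ is controlled by $\|\nabla\psi\|_\infty\|w\|_{L^1}$. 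So from $\mathcal M(m,w)<+\infty$ and the convention that $mL(w/m)=+\infty$ when $m=0,w\neq0$, we deduce that $w=0$ a.e.\ on $\{m=0\}$ and that
$$
\intif\Big[mL\Big(\tfrac wm\Big)+F(t,x,m)\Big]dxdt<+\infty + C\|w\|_{L^1(Q_T)}.
$$
Since $mL(w/m)=mH^*(-w/m)\ge 0$ and $F\ge 0$, and since both integrands are nonnegative, I would still need to absorb the $\|w\|_{L^1}$ term; this is where the coercivity $mH^*(-w/m)\ge C^{-1}m|w/m|^{r'}-Cm = C^{-1}m^{1-r'}|w|^{r'}-Cm$ comes in, together with $\|m\|_{L^1}\le T(\|m_0\|_{L^1}+\|j\|_{L^1})$ from \eqref{stimamL1}.

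Concretely: using $F(t,x,m)\ge C^{-1}m^q-C$ from \eqref{hp:FFstar} and $|Q_T|<\infty$, finiteness of $\intif F\,dxdt$ gives $m\in L^q(Q_T)$ once I have shown the left-hand side is genuinely finite, i.e.\ once the $\|w\|_{L^1}$ term has been dealt with. For the $w$ estimate, on $\{m>0\}$ I write, pointwise,
$$
|w| = m^{1/r}\,\Big(\tfrac{|w|^{r'}}{m^{r'-1}}\Big)^{1/r'}\cdot\text{(check exponents: } \tfrac1r+\tfrac1{r'}=1\text{)},
$$
so that by Hölder with exponents $r, r'$,
$$
\|w\|_{L^1(Q_T)}\le \Big(\intif m^{?}\Big)^{1/r}\Big(\intif m\,H^*(-w/m)\text{-type term}\Big)^{1/r'} + \text{lower order},
$$
and I would choose the power of $m$ in the first factor to be exactly $q$ (or less, using $L^q\hookrightarrow L^{p}$ on the bounded set $Q_T$ for $p\le q$), which is what forces the value $\beta=\frac{qr}{qr-q+1}$: solving $\frac1\beta = \frac{1}{r'}\cdot 1 + \frac1r\cdot\frac1{q}$ type relation for the exponent in the mixed Hölder/Jensen estimate yields precisely this $\beta$. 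More carefully, I would apply Hölder directly to $\|w\|_{L^\beta}^\beta = \intif |w|^\beta = \intif \big(m^{1-r'}|w|^{r'}\big)^{\beta/r'} m^{\beta(r'-1)/r'}$ and pick the two exponents $r'/\beta$ and its conjugate so that the second factor becomes $\intif m^q$; matching gives $\beta = \frac{qr}{qr-q+1}$, and boundedness of both factors (the first by the already-established finiteness of $\intif mH^*(-w/m)$, the second by $m\in L^q$) yields $w\in L^\beta(Q_T)$.

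The main obstacle I expect is the bootstrapping order: the $L^q$ bound on $m$ and the $L^\beta$ bound on $w$ are coupled through the $\intif w\cdot\nabla\psi$ term in $\mathcal M$, so I cannot treat them fully independently. The clean way around this is to first use Young's inequality on $w\cdot\nabla\psi$ — writing $|w\cdot\nabla\psi|\le \delta\, mH^*(-w/m) + C_\delta\, m\,(H^*)^{*}\text{-type}(\nabla\psi) \le \delta\, mH^*(-w/m) + C_\delta\, m(|\nabla\psi|^r+1)$, using the Fenchel inequality $a\cdot b \le H^*(a)\text{-scaled} + H(\text{-scaled }b)$ applied to $a = w/m$, $b=\nabla\psi$ and multiplying by $m$ — so that for $\delta$ small the $mH^*(-w/m)$ contribution is absorbed into the left side and the remainder is bounded by $C_\delta(1+\|\nabla\psi\|_\infty^r)\|m\|_{L^1}$, which is finite by \eqref{stimamL1}. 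After this absorption step the two estimates decouple cleanly: first $\intif[mH^*(-w/m)+F(t,x,m)]dxdt\le C$, hence $m\in L^q$ via the lower bound on $F$, and then $w\in L^\beta$ via the Hölder argument above. I would close by double-checking the arithmetic $\frac1\beta=\frac1{r'}+\frac1{rq}$, i.e.\ $\beta=\frac{qr}{qr-q+1}$, and noting $\beta>1$ since $r,q>1$.
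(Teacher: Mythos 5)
Your proposal is correct and follows essentially the same route as the paper's proof: absorb the $\iint w\cdot\nabla\psi$ term into the coercive $\iint mL(w/m)$ term by a Young-type inequality (you phrase it via Fenchel--Young for $H,H^*$, the paper uses the power inequality with exponents $r,r'$; these are equivalent given the growth bounds of Remark~\ref{rem21}), then read off $m\in L^q$ from the lower bound on $F$ and obtain $w\in L^\beta$ by H\"older applied to $|w|^\beta = (m^{1-r'}|w|^{r'})^{\beta/r'}m^{\beta(r'-1)/r'}$, matching exponents so the second factor is $\int m^q$. Your exponent arithmetic ($\beta = \frac{qr'}{q+r'-1} = \frac{qr}{qr-q+1}$, $\beta>1$ since $q>1$) agrees with the paper's.
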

\begin{proof}
If $\mathcal M(m,w)<+\infty$, we have $\min\limits_{(m,w)\in\mathscr M}\mathcal M(m,w)\le C$ for some constant $C>0$, i.e.
\begin{equation}\label{eq:M}
\begin{split}
\intif \Big[m\,{L\left(x,\frac wm\right)}&+F(t,x,m)+w\cdot\nabla \psi(x)\Big]\,dxdt\\&+\into \psi(x)m_0(x)\,dx+\int_0^T\int_{\partial\Omega}\psi(x)\,j(t,x)\,dx dt\le C
\end{split}
\end{equation}

Thanks to Assumption \ref{hp}, we can estimate the integrals in the left-hand side: the $L^\infty$ bounds for $\psi$ and $j$ imply
\[
\into \psi(x)m_0(x)\,dx\ge -\norminf{\psi},
\]
and
\[
\intb \psi jdx dt\ge-C\norminf{\psi}\norminf{j},
\]
whereas the growth assumption on $F$ and Remark \ref{rem21} imply
\[\intif F(t,x,m)dxdt\ge C^{-1}\|m\|_{L^q}^q - C\,,
\]
and
\[
 \intif m{L\left(x,\frac wm\right)}dxdt\ge C^{-1}\iint_{\{m>0\}} \frac{|w|^{r'}}{m^{r'-1}}dxdt -C\,.
\]
Finally, for the last integral in $\mathcal M(m,w)$, we use a generalized Young's inequality with exponents $r'$ and $r$ to get
$$
\intif w\cdot\nabla \psi(x)dxdt\ge -\norminf{\nabla\psi}\iint_{\{m>0\}}\frac{|w|}{m^{\frac1r}}m^{\frac1r}dxdt\ge -\frac{C^{-1}}2\iint_{\{m>0\}} \frac{|w|^{r'}}{m^{r'-1}}\,dxdt -C\,,
$$
where in the last step, we use \eqref{stimamL1}. Coming back to \eqref{eq:M} we find
$$
\frac1{2C}\iint_{\{m>0\}} \frac{|w|^{r'}}{m^{r'-1}}\,dxdt+C^{-1}\norm{m}_{L^q}^q\le C\big(1+\norminf{\psi}+\norminf\psi\norminf j\big)\,.
$$

Hence, $m\in L^q(Q_T)$ and $\frac{|w|^{r'}}{m^{r'-1}}\in L^1(Q_T)$.

From here, we derive the following bound on $w$. If $\gamma, s>1$, $\alpha>0$, we have, by H\"older's inequality,
\[
\intif |w|^{\gamma}dxdt\le \left(\intif \frac{|w|^{\gamma s}}{m^{\alpha s}}dxdt\right)^{\frac 1s}\left(\intif m^{\alpha s'}dxdt\right)^{\frac 1{s'}}.
\]
The right-hand side is bounded if $\gamma s=r'$, $\alpha s=r'-1$, $\alpha s'=q$; that is,  with 
$$
\gamma=\frac{r'q}{r'+q-1}=\frac{qr}{qr-q+1}\,,\qquad s=\frac{r'+q-1}q\,,\qquad \alpha=\frac{(r'-1)q}{r'+q-1}\,.
$$
Then, $w\in L^{\frac{qr}{qr-q+1}}$, which concludes the proof.
\end{proof}

In the sequel, we use the following notation. Let  $\bo{\rho}=(m,w)$ and define
\begin{equation}\label{bold}
\bo{\mathrm{div}}(\cdot)=\mathrm{div}_{(t,x)}(\cdot)\,,\qquad \bo D=(\partial_t,\nabla)\,.
\end{equation}
The first equation in \eqref{eq:fp} can be written as
$$
\bo{\mathrm{div}}(\bo \rho)=0\,.
$$

Next, we establish a useful property for the function $m$ by proving the following proposition.
\begin{pro}\label{pro33}
	Assume $\theta>1$ and  let $(m,w)\in L^\theta(Q_T;\R^{N+1})$ solve
Problem \ref{var2}.
Then, for all $t\in(0,T]$, we have  $m(t)\in \left(\mathcal C^\gamma(\Omega)\right)'$, for any $\gamma\in (\theta^{-1},1)$. 
\end{pro}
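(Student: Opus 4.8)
The content of the statement is that, although $m$ is a priori only an element of $L^{\theta}(Q_T)$ — so that the slice $m(t)$ is defined merely for a.e.\ $t$ — one can assign a canonical meaning to $m(t)$ for \emph{every} $t\in(0,T]$ as an element of $(\mathcal C^{\gamma}(\overline\Omega))'$, and this works precisely when $\gamma>\theta^{-1}$. The plan is to read off from the continuity equation \eqref{eq:fp} a continuity‑in‑time estimate for $t\mapsto m(t)$ with values in a negative Hölder space, and then to invoke a Besov (Nikolskii‑type) embedding in the time variable.

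First, I would test \eqref{either} with separated functions $\xi(s,x)=\chi(s)\phi(x)$, where $\chi\in W^{1,\infty}([0,T])$ with $\chi(T)=0$ and $\phi\in\mathcal C^{1}(\overline\Omega)$. Writing $h_{\phi}(s):=\into m(s,x)\phi(x)\,dx$ (defined for a.e.\ $s$) and $G_{\phi}(s):=\into w(s,x)\cdot\nabla\phi(x)\,dx+\int_{\partial\Omega}j(s,x)\phi(x)\,d\sigma(x)$, the identity \eqref{either} becomes
$$\int_{0}^{T}h_{\phi}(s)\chi'(s)\,ds+\int_{0}^{T}G_{\phi}(s)\chi(s)\,ds+\chi(0)\into \phi\,m_{0}\,dx=0 .$$
Since $w\in L^{\theta}(Q_T)\subset L^{1}(Q_T)$ and $j\in L^{1}$, we have $G_{\phi}\in L^{1}(0,T)$, so this identity says exactly that $h_{\phi}$ admits an absolutely continuous representative on $[0,T]$ with $h_{\phi}(0)=\into\phi\,m_{0}$ and $h_{\phi}'=G_{\phi}$ a.e.\ on $(0,T)$. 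Accordingly, for every $t\in(0,T]$ I define
$$\langle m(t),\phi\rangle:=\into \phi\,m_{0}\,dx+\int_{0}^{t}\!\!\into w\cdot\nabla\phi\,dx\,ds+\int_{0}^{t}\!\!\int_{\partial\Omega}j\phi\,d\sigma\,ds ,\qquad \phi\in\mathcal C^{1}(\overline\Omega) ,$$
which coincides with $\into m(t,\cdot)\phi$ for a.e.\ $t$ and already defines a linear functional on $\mathcal C^{1}(\overline\Omega)$.

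Next, I would upgrade this to $\mathcal C^{\gamma}$ test functions by mollification. Given $\phi\in\mathcal C^{\gamma}(\overline\Omega)$, I extend it to $\R^{N}$ with comparable norm (using that $\Omega$ is Lipschitz) and mollify at scale $\eps$, getting $\phi_{\eps}\in\mathcal C^{\infty}$ with $\|\phi-\phi_{\eps}\|_{L^{\infty}(\Omega)}\le C\eps^{\gamma}\|\phi\|_{\mathcal C^{\gamma}}$ and $\|\nabla\phi_{\eps}\|_{L^{\infty}(\Omega)}\le C\eps^{\gamma-1}\|\phi\|_{\mathcal C^{\gamma}}$. For a.e.\ $s<t$, I split $\langle m(t)-m(s),\phi\rangle=\langle m(t)-m(s),\phi_{\eps}\rangle+\langle m(t)-m(s),\phi-\phi_{\eps}\rangle$; the first term is estimated by the formula above together with H\"older's inequality in $(s,x)$, costing a factor $|t-s|^{1/\theta'}\|w\|_{L^{\theta}(Q_T)}\|\nabla\phi_{\eps}\|_{L^{\theta'}(\Omega)}$ plus a harmless boundary contribution $\lesssim|t-s|\,\|j\|_{L^{1}}\|\phi\|_{\mathcal C^{\gamma}}$, while the second is bounded by $\big(\|m(t)\|_{L^{\theta}(\Omega)}+\|m(s)\|_{L^{\theta}(\Omega)}\big)\|\phi-\phi_{\eps}\|_{L^{\theta'}(\Omega)}$. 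Optimizing over $\eps$ (the two main terms scale like $\eps^{\gamma-1}$ and $\eps^{\gamma}$), I expect to obtain, for a.e.\ $s,t$,
$$\|m(t)-m(s)\|_{(\mathcal C^{\gamma}(\overline\Omega))'}\le C\big(\|m(t)\|_{L^{\theta}(\Omega)}+\|m(s)\|_{L^{\theta}(\Omega)}\big)^{1-\gamma}|t-s|^{\gamma/\theta'}+C|t-s| .$$
Since $s\mapsto\|m(s)\|_{L^{\theta}(\Omega)}$ lies in $L^{\theta}(0,T)$, the prefactor lies in $L^{\theta/(1-\gamma)}(0,T)$, so this is a Nikolskii‑type bound placing $m$ in $B^{\gamma/\theta'}_{\theta/(1-\gamma),\infty}(0,T;(\mathcal C^{\gamma})')$; this space embeds into $\mathcal C([0,T];(\mathcal C^{\gamma})')$ exactly when $\frac{\gamma}{\theta'}>\frac{1-\gamma}{\theta}$, which, using $\theta+\theta'=\theta\theta'$, is equivalent to $\gamma>\theta^{-1}$. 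Hence $m$ has a representative in $\mathcal C([0,T];(\mathcal C^{\gamma}(\overline\Omega))')$, so $m(t)\in(\mathcal C^{\gamma}(\overline\Omega))'$ for every $t\in(0,T]$, and the stated range of $\gamma$ is precisely the one for which the argument closes.

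The main obstacle is exactly this passage from ``a.e.\ $t$'' to ``every $t$'': the slice $m(t)$ must be controlled in a topology weak enough to be reachable from the only structural input available — the transport equation, which forces a derivative onto the test function and hence a negative‑regularity target space — yet strong enough that the $L^{\theta}(Q_T)$ bound on $m$ produces a genuine positive‑order modulus of continuity in time; the mollification balancing together with the Besov embedding are what reconcile these two demands, and $\gamma=\theta^{-1}$ is the break‑even exponent. Two minor technical points to watch are that the mollification has to be performed on an extension of $\phi$ across the Lipschitz boundary $\partial\Omega$, and that the boundary flux term $\int_{0}^{t}\int_{\partial\Omega}j\phi$ must be carried along at every step; the $L^{1}$ bound on $j$ and the H\"older extension operator make both harmless. (Alternatively, since $m\ge0$ here one could observe that $\langle m(t),\cdot\rangle$ is a positive functional with $\langle m(t),1\rangle$ bounded uniformly in $t$, hence extends to a finite Radon measure on $\overline\Omega$; but the argument above is preferable since it uses neither the sign of $m$ nor any uniform $L^{1}$ bound on the slices.)
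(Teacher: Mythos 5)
Your proposal is correct in its conclusion and in the threshold $\gamma>\theta^{-1}$, but it takes a genuinely different route from the paper. The paper's proof is short and abstract: it invokes the normal-trace theorem for $L^{\theta}$ vector fields with vanishing space-time divergence (Lemma~II.1.2.2 in Sohr's book), which produces a functional $\tr(\bo\rho)\in\big(W^{1/\theta,\theta'}(\partial_P Q_t)\big)'$ directly; the desired statement then follows from the single embedding $\mathcal C^{\gamma}(\Omega)\subset W^{1/\theta,\theta'}(\Omega)$, which is precisely where the condition $\gamma>\theta^{-1}$ enters. You instead build $\langle m(t),\phi\rangle$ by hand, first for $\phi\in\mathcal C^{1}$ via absolute continuity of $t\mapsto\int m(t)\phi$, then for $\phi\in\mathcal C^{\gamma}$ by a mollification balance, and you close the argument through a Nikolskii--Besov embedding in the time variable, where the same inequality $\gamma/\theta'>(1-\gamma)/\theta\Leftrightarrow\gamma>\theta^{-1}$ reappears. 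The paper's route is shorter once the Sohr lemma is accepted as a black box and needs no time-regularity machinery; your route is more elementary and self-contained (no reference to the trace theorem) and, as a by-product, actually yields continuity of $t\mapsto m(t)$ in $(\mathcal C^{\gamma})'$, which is stronger than the stated claim and is essentially what makes your $\eps$-optimization plus Besov embedding work. Both identify the same explicit formula
\[
\langle m(t),\phi\rangle=\int_{\Omega}\phi\,m_0\,dx+\int_0^t\!\!\int_{\Omega}w\cdot\nabla\phi\,dx\,ds+\int_0^t\!\!\int_{\partial\Omega}j\phi\,d\sigma\,ds ,
\]
and the definitions agree on a.e.\ $t$, hence everywhere. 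One small point to make fully rigorous in your version: the vector-valued embedding $B^{s}_{p,\infty}\big((0,T);X\big)\hookrightarrow\mathcal C\big([0,T];X\big)$ for $s>1/p$ should be cited explicitly (it is standard, e.g.\ in Amann or Simon), and the "harmless" boundary term uses that $j$ is continuous hence bounded, which Assumption~\ref{hp} does provide. Your parenthetical alternative via positivity of $m$ would also work here and is closer in spirit to classical continuity-equation arguments, but, as you note, it uses structure that your main argument does not need.
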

\begin{proof}
Let $t\in(0,T]$. Since the pair $(m,w)$ satisfies $\bo{\mathrm{div}}(\bo \rho)=0$, thanks to \emph{Lemma II.1.2.2} of \cite{Sohr}, there exists a bounded linear functional $\tr(\bo \rho)\in \big(W^{\frac1\theta,\theta'}(\partial_PQ_t)\big)'$ which extends the classical trace at the boundary. This means that, when $\bo \rho\in\mathcal C(Q_t)$, for all $\phi\in W^{\frac1\theta,\theta'}(\partial_PQ_t)$
$$
\langle\tr(\bo \rho),\phi\rangle=\into m(t)\phi(t,x)dx-\into m_0(x)\phi(0,x)dx+\int_0^t\int_{\partial\Omega}w(s,x)\cdot\nu(x)\phi(s,x)dxds\,.
$$
Since $w(s,x)\cdot\nu(x)=-j(s,x)$ when $x\in\partial\Omega$ and $m_0\in\mathcal C^1(\Omega)$, the last two integrals in the right-hand side are well-defined, even if $\bo \rho$ is not continuous.

Now, take $\phi\in\mathcal C^\gamma(\Omega)$. We can extend $\phi$ to $\partial_PQ_t$, by defining $\tilde\phi(t,x)=\phi(x)$ for all $(t,x)\in\partial_PQ_t$. Then $\tilde\phi\in\mathcal C^\gamma(\Omega)\subset W^{\frac1\theta,\theta'}(\Omega)$, and we can define
\begin{equation}\label{defmt}
\langle m(t),\phi\rangle=\langle\tr(\bo \rho),\tilde\phi\rangle+\into \phi(x)m_0(x)\,dx+\int_0^t\int_{\partial\Omega}j(s,x)\phi(x)dx ds\,.
\end{equation}
\end{proof}

Henceforth, we use the notation $\into \phi(x)m(t,dx)$ to indicate the quantity $\langle m(t),\phi\rangle$. An immediate consequence of the previous Proposition is the following Corollary.
\begin{cor}
Let $\theta>1$ and let $\bo \rho\in L^\theta(Q_T;\R^{N+1})$ solve \eqref{eq:fp}. Then, for all $t\in(0,T]$ and $\xi\in W^{1,\infty}(Q_T)$, we have
\begin{align}
\inti(m\xi_t+w\cdot\!\nabla\xi)\,dxds+\into\xi(0,x)m_0(x)\,dx+\int_0^t\int_{\partial\Omega} j\xi\, dx ds=\into\xi(t,x)\,m(t,dx)\,,\label{generalformula}\\
\intf(m\xi_t+w\cdot\!\nabla\xi)\,dxds+\into\xi(t,x)\,m(t,dx)+\int_t^T\int_{\partial\Omega} j\xi\, dx ds=\into\xi(T,x)\,m(T,dx)\,.\notag
\end{align}
\end{cor}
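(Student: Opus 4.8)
The statement to prove is the Corollary following Proposition~\ref{pro33}: it asserts two integration-by-parts identities involving the distributional solution $(m,w)$, valid for test functions $\xi\in W^{1,\infty}(Q_T)$ and intermediate times $t$, where $m(t,\cdot)$ is understood as the dual object $\langle m(t),\cdot\rangle\in(\mathcal C^\gamma(\Omega))'$ constructed in the proof of Proposition~\ref{pro33}.

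\medskip

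The plan is to derive the first identity \eqref{generalformula} directly from the definition \eqref{defmt} of $m(t,\cdot)$ together with the characterization of the trace functional $\tr(\bo\rho)$ from \emph{Lemma II.1.2.2} of \cite{Sohr}. First I would take $\xi\in W^{1,\infty}(Q_T)$ and restrict it to $Q_t=[0,t]\times\Omega$; its restriction lies in $W^{1,\infty}(Q_t)$, so its trace on the parabolic boundary $\partial_P Q_t$ belongs to $W^{\frac1\theta,\theta'}(\partial_P Q_t)$ (indeed with much better regularity), hence is an admissible test object for $\tr(\bo\rho)$. Applying the defining formula for $\tr(\bo\rho)$ on $Q_t$ against $\xi$ — which by Lemma II.1.2.2 equals the distributional pairing $\langle\bo{\mathrm{div}}(\bo\rho),\xi\rangle + \iint_{Q_t}\bo\rho\cdot\bo D\xi\,dxds$ and, since $\bo{\mathrm{div}}(\bo\rho)=0$, reduces to $\iint_{Q_t}(m\xi_t+w\cdot\nabla\xi)\,dxds$ — and then matching this with the explicit boundary-integral expression valid on the continuous core, one reads off the equality
$$
\iint_{Q_t}(m\xi_t+w\cdot\nabla\xi)\,dxds = \langle\tr(\bo\rho),\xi\rangle,
$$
where $\langle\tr(\bo\rho),\xi\rangle$ formally collects $\into m(t)\xi(t,x)dx - \into m_0\xi(0,x)dx + \int_0^t\int_{\partial\Omega}w\cdot\nu\,\xi\,dxds$. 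Substituting $w\cdot\nu=-j$ on the lateral boundary and invoking the definition \eqref{defmt} of $\langle m(t),\xi(t,\cdot)\rangle$ then rearranges exactly into \eqref{generalformula}. The second identity follows by the symmetric argument on $[t,T]\times\Omega$, using the terminal condition $\xi(T,\cdot)$ in place of the initial one — or, more slickly, by subtracting the first identity (at level $t$) from the first identity applied on all of $Q_T$ with the terminal datum $m(T,\cdot)$, so it is essentially a corollary of the first.

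\medskip

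A few technical points need care. One must check that the various boundary integrals are genuinely well-defined: $\into \xi(0,x)m_0(x)\,dx$ is fine since $m_0\in L^1(\Omega)$ (indeed $\mathcal C^1$) and $\xi\in L^\infty$; $\int_0^t\int_{\partial\Omega}j\xi\,dxds$ is fine since $j$ is continuous and $\xi\in W^{1,\infty}$ has a well-defined bounded trace on $\partial\Omega$; and $\into\xi(t,x)\,m(t,dx)$ is meaningful because $\xi(t,\cdot)\in W^{1,\infty}(\Omega)\subset\mathcal C^\gamma(\overline\Omega)$ for every $\gamma\in(0,1)$, in particular for some $\gamma\in(\theta^{-1},1)$, so it pairs with $m(t)\in(\mathcal C^\gamma(\Omega))'$ by Proposition~\ref{pro33}. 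One should also note that \eqref{either} in the definition of $\mathscr M$ is stated only for $\xi$ vanishing at the final time; extending to general $\xi\in W^{1,\infty}(Q_T)$ is precisely what the trace construction buys us, and replacing $[0,T]$ by $[0,t]$ requires the extension/restriction argument just described rather than a direct appeal to \eqref{either}.

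\medskip

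The main obstacle is the justification that the abstract trace functional of Lemma II.1.2.2 of \cite{Sohr}, when paired with a $W^{1,\infty}$ test function, really does decompose into the three concrete integrals over $\Omega\times\{0\}$, $\Omega\times\{t\}$, and $\partial\Omega\times(0,t)$ with the correct signs — i.e. that the distributional identity $\langle\tr(\bo\rho),\xi\rangle = \iint_{Q_t}(m\xi_t+w\cdot\nabla\xi)$ is compatible with the pointwise formula written in the proof of Proposition~\ref{pro33} for continuous $\bo\rho$, and that this compatibility persists under $L^\theta$-approximation of $\bo\rho$ by smooth fields (using $\bo{\mathrm{div}}(\bo\rho)=0$, which is stable, and the continuity of the trace map $L^\theta_{\bo{\mathrm{div}}}\to(W^{\frac1\theta,\theta'})'$). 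Once that identification is in hand, the two displayed identities are immediate algebraic rearrangements of \eqref{defmt}, so the substance of the proof is entirely in this trace bookkeeping; the rest is routine.
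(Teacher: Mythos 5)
Your outline is pointing in the right direction, but you have correctly identified and then failed to close the one nontrivial step, which is where all the work of the proof lies. You want to pair $\tr(\bo\rho)$ directly with $\xi|_{\partial_P Q_t}$ and then ``read off'' a decomposition into the three concrete boundary integrals over $\Omega\times\{t\}$, $\Omega\times\{0\}$, and $\partial\Omega\times(0,t)$. The difficulty is that for $\bo\rho$ merely in $L^\theta$ the expression $\into m(t)\xi(t,x)\,dx$ has no independent meaning: the quantity $\langle m(t),\phi\rangle$ is only defined via \eqref{defmt}, which uses the \emph{time-constant} extension $\tilde\phi(s,x)=\phi(x)$. So the decomposition you want is equivalent to the identity
$$
\langle\tr(\bo \rho),\,\xi|_{\partial_P Q_t}-\tilde\xi(t,\cdot)|_{\partial_P Q_t}\rangle
=\into m_0\big(\xi(t,x)-\xi(0,x)\big)\,dx+\int_0^t\!\!\int_{\partial\Omega} j\big(\xi(t,x)-\xi(s,x)\big)\,dx\,ds,
$$
where the left-hand side is a pairing with a function vanishing on $\Omega\times\{t\}$. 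You flag this as ``trace bookkeeping'' and move on, but this is precisely the substance of the corollary; without it the proposal is incomplete.

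You also suggest, incorrectly, that one cannot simply appeal to \eqref{either} because the test function must vanish at $T$. In fact the paper closes the gap exactly by constructing an admissible test function: take $\varphi(s,x)=\xi(s,x)-\xi(t,x)$ for $0\le s\le t$ and $\varphi(s,x)=0$ for $t<s\le T$. Then $\varphi\in W^{1,\infty}(Q_T)$ and $\varphi(T,\cdot)=0$, so \eqref{either} applies directly. Plugging $\varphi$ into \eqref{either} and rearranging puts all the $\xi(t,\cdot)$-terms on one side, and because $\xi(t,\cdot)$ is constant in time the Sohr integration-by-parts identity together with $\bo{\mathrm{div}}(\bo\rho)=0$ shows that side equals $\langle\tr(\bo\rho),\tilde\xi(t,\cdot)\rangle$ plus the two $m_0$- and $j$-integrals, which is exactly the right-hand side of \eqref{defmt}, i.e.\ $\into\xi(t,x)\,m(t,dx)$. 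This yields the first identity; the second follows, as you correctly say, by evaluating the first at $t=T$ and subtracting. So your landing point and toolkit are right, but the key move — the subtraction $\xi\mapsto\xi-\xi(t,\cdot)$ and cut-off at $T$, which simultaneously gives an admissible test function for \eqref{either} and produces the time-constant datum that \eqref{defmt} works with — is missing.
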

\begin{proof}
Let $\xi\in W^{1,\infty}(Q_T)$. Then, the function
$$
\varphi(s,x):=\begin{cases}
\xi(s,x)-\xi(t,x) &\text{if }0\le s\le t\,,\\
0 &\text{if }t<s\le T
\end{cases}
$$
satisfies $\varphi\in W^{1,\infty}(Q_T)$ and $\varphi(T,\cdot)=0$. Then, by \eqref{either}, we have
\begin{align*}
&\inti (m\xi_t+w\cdot\nabla\xi)dxds+\into \xi(0,x)m_0(x)\,dx+\int_0^t\int_{\partial\Omega} j(t,x)\xi(s,x)dx ds\\
&\quad=\inti w\cdot\nabla\xi(t,x)dxds+\into\xi(t,x)m(t,dx)+\int_0^t\int_{\partial\Omega}j(t,x)\xi(t,x)dx ds\,.
\end{align*}
The first integral on the left-hand side can be rewritten as
$$
\inti \bo \rho\cdot \bo D\xi(s,x)dxds=-\inti\bo{\mathrm{div}}(\bo \rho)\xi(t,x)dxds+\langle\tr(\bo \rho),\xi(t,\cdot)\rangle=\langle\tr(\bo \rho),\xi(t,\cdot)\rangle\,.
$$
Since $\xi\in W^{1,\infty}(Q_T)$, we have from the Sobolev embedding $\xi(t)\in\mathcal C^\gamma(\Omega)$ for all $\gamma<1$. Then, we can use \eqref{defmt} with $\phi(\cdot)=\xi(t,\cdot)$, which implies directly the first equation of \eqref{generalformula}. Computing it for $t=T$, for a generic $t\in(0,T]$ and subtracting the two equations, we obtain the second one. This concludes the proof. 
\end{proof}

Reformulating Problem \ref{var2} as presented below simplifies the functional $\mathcal M(m,w)$ by directly integrating the terminal condition, thus enabling a more direct application of variational techniques. This reformulation is used in the proof of Proposition \ref{reverse}. 

\begin{remark}
\label{remark35}
An equivalent formulation for Problem \ref{var2} use the following alternative representation,
\begin{equation}\label{eq:seconda}
\mathcal M(m,w)=\into \psi(x)m(T,dx)+\intif \left[m{L\left(x,\frac wm\right)}+F(t,x,m)\right]dxdt\,,
\end{equation}
which holds provided $m(T)$ is well defined as an element of $(\mathcal C^1(\Omega))'$.

The first integral of \eqref{eq:seconda} can be written as
\[
\into \psi(x)m(T,dx)=\langle\tr(\bo \rho),\psi\rangle+\intb\psi j\,dx dt+\into\, \psi m_0(x)\,dx\,.
\]
The last two integrals appear in the definition of $\mathcal M$ \eqref{def:M}. Concerning the first one, integrating by parts, we get
$$
0=\intif\bo{\mathrm{div}}(\bo \rho)\psi(x)dxdt=-\intif\bo \rho\cdot\bo{D}\psi(x)dxdt+\langle\tr(\bo \rho),\psi\rangle\,,
$$
which implies
$$
\langle\tr(\bo \rho),\psi\rangle=\intif w(t,x)\cdot\nabla \psi(x)dxdt\,,
$$
which proves the equivalence of \eqref{def:M} and \eqref{eq:seconda}.

We observe that $\bo \rho\in L^\theta(Q_T;\R^{N+1})$, with $\theta=\beta>1$ given by Proposition \ref{prop_regM} (observe that we always have $q>\beta$). Hence, $m(T)$ is well-defined thanks to Proposition \ref{pro33}, and Problem \ref{var2} can be reformulated by replacing \eqref{def:M} with \eqref{eq:seconda}.
\end{remark}

\section{Duality}\label{sec4}

Building on \cite[Lemma 2]{Card1order}, we explore the duality between Problems \ref{var1} and \ref{var2}.
The main result in this section  is the following theorem, which establishes the existence of a solution for Problem \ref{var2}.
\begin{teo}\label{duality}
Consider the setting of Problems \ref{var1} and \ref{var2}. 
Suppose Assumption \ref{hp} holds. Then there exists a unique minimizer $(m,w)$ of $\mathcal M$, with $m\in L^q(Q_T)$ and $w\in L^{\frac{qr}{qr-q
+1}}(Q_T;\R^N)$. Additionally, 
\begin{equation}\label{eq:duality}
\inf\limits_{v\in\mathscr I}\mathcal I(v)=-\min\limits_{(m,w)\in\mathscr M}\mathcal M(m,w) .
\end{equation}
\end{teo}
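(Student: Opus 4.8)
The plan is to derive \eqref{eq:duality} from the Fenchel--Rockafellar duality theorem, following the strategy of \cite[Lemma 2]{Card1order} but adapting it to the Neumann boundary data. The first step is to recast Problem \ref{var1} as the minimization over a Banach space $E$ of a sum $\mathcal A(\Lambda v) + \mathcal B(v)$, where $\Lambda$ is a bounded linear operator. Natural choices are $E = \mathcal C^1(Q_T)$ (or a suitable closure), $\Lambda v = (-v_t + H(\nabla v), v)$ landing in $F := \mathcal C(Q_T)\times \mathcal C^1(Q_T)$ — note $\Lambda$ is \emph{affine} because of the $H(\nabla v)$ term, so one first subtracts a fixed function or, more cleanly, works with the convex (not linear) functional $v\mapsto \int F^*(t,x,-v_t+H(\nabla v))$ directly as $\mathcal A\circ\Lambda$ with $\Lambda v=(v_t,\nabla v)$ linear and $\mathcal A(a,b)=\int F^*(t,x,-a+H(b))\,dxdt$, which is convex since $F^*$ is nondecreasing and convex and $H$ is convex. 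Then $\mathcal B(v)$ encodes the linear boundary terms $-\int v(0,x)m_0 - \int_0^T\int_{\partial\Omega} jv$ together with the constraint $v(T,\cdot)=\psi$ (via an indicator, which is convex and, crucially, has a point of continuity of $\mathcal A\circ\Lambda$ in its effective domain — take any smooth $v$ with $v(T,\cdot)=\psi$, at which $\mathcal A\circ\Lambda$ is finite and continuous, giving the qualification hypothesis).

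The second step is to compute the two conjugates. The conjugate of $\mathcal B$ is immediate: it forces the dual variable to match the pairing against $m_0$ on $\{t=0\}$, against $j$ on $[0,T]\times\partial\Omega$, and leaves a free multiplier $m(T,\cdot)$ against $\psi$ on $\{t=T\}$. The conjugate of $\mathcal A$ is where the structure of the problem enters: for a measure/function $(m,w)$ dual to $(v_t,\nabla v)$, one computes
$$
\mathcal A^*(m,w) = \sup_{a,b}\Big\{\int (am + b\cdot w) - \int F^*(t,x,-a+H(b))\Big\},
$$
and the change of variables $s = -a + H(b)$ together with the Legendre duality $F^{**}=F$ and $H^{**}=H$ (Remark \ref{rem21}) turns this into $\int [F(t,x,m) + mL(w/m)]$ when $m\ge 0$ and $w/m$ is admissible, and $+\infty$ otherwise — precisely because $F(t,x,m) = \sup_s(sm - F^*(t,x,s))$ and $mH^*(-w/m) = \sup_b(b\cdot(-w) - mH(b))$, with the convention on $m=0$ matching the one in Problem \ref{var2}. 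Assembling $\mathcal A^*$ and $\mathcal B^*$ reproduces $\mathcal M(m,w)$ in the form \eqref{eq:seconda}/\eqref{def:M}, and the constraint set extracted from finiteness of the conjugates is exactly $\mathscr M$, i.e.\ the distributional Fokker--Planck equation \eqref{either} with the Neumann condition. Fenchel--Rockafellar then yields $\inf_{\mathscr I}\mathcal I = -\min_{\mathscr M}\mathcal M$ with the dual infimum attained.

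The third step is to upgrade the attained dual minimizer from a measure to the claimed integrability and to establish uniqueness. Attainment gives some $(m,w)$ with $\mathcal M(m,w)<+\infty$; Proposition \ref{prop_regM} then immediately promotes this to $m\in L^q(Q_T)$ and $w\in L^{qr/(qr-q+1)}(Q_T;\R^N)$. For uniqueness, one uses strict convexity: $F$ is strictly convex in $m$ (as $f$ is strictly increasing) and, by Lemma \ref{lem:strictHstar} and Remark \ref{rem21}, $H^*$ is strictly convex, so $(m,w)\mapsto \int[F(t,x,m)+mH^*(-w/m)]$ is strictly convex on the convex set $\mathscr M$ jointly in $(m,w)$ (the perspective function $mH^*(-w/m)$ is convex, and combined with strictly convex $F(t,x,m)$ it is strictly convex in $m$; on the fibre $m$ fixed it is strictly convex in $w$ by strict convexity of $H^*$). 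Two distinct minimizers would contradict this by convex combination, since the linear terms in $\mathcal M$ are unaffected and $\mathscr M$ is convex.

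The main obstacle I expect is the functional-analytic bookkeeping in the second step: choosing the pair of Banach spaces so that (i) the qualification hypothesis of Fenchel--Rockafellar genuinely holds, (ii) the conjugate $\mathcal A^*$ is computed on the \emph{right} dual space — a priori the dual of $\mathcal C(Q_T)$ is a space of measures, so one must argue that finiteness of $\mathcal M$ plus the $L^1$/$L^q$ bounds of Section \ref{efp3} and Proposition \ref{prop_regM} force the minimizing measure to be absolutely continuous with the stated integrability — and (iii) the Neumann term $-\int_0^T\int_{\partial\Omega} jv$, which is a boundary (not interior) linear functional, is correctly carried through the conjugation so that it reappears as the boundary condition $-w\cdot\nu = j$ in $\mathscr M$ rather than being lost; this is exactly the point where the argument departs from the periodic case of \cite{Card1order} and requires care with traces, using the trace operator $\tr(\bo\rho)$ from Proposition \ref{pro33}.
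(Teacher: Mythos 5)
Your proposal follows essentially the same route as the paper: it recasts Problem~\ref{var1} as $\inf_v\{\mathcal F(v)+\mathcal G(\bo D v)\}$ with $\mathcal G(a,b)=\int F^*(t,x,-a+H(b))$, $\mathcal F$ collecting the linear boundary terms plus the indicator of $\{v(T)=\psi\}$, applies Fenchel--Rockafellar (Ekeland--Temam III.4.1) with the same qualification point (a smooth $v$ with $v(T)=\psi$), computes $\mathcal F^*$ to recover the Fokker--Planck constraint with Neumann data via the trace machinery, computes $\mathcal G^*$ as in Cardaliaguet, and then gets integrability from Proposition~\ref{prop_regM} and uniqueness from strict convexity of $F$ and $H^*$ by convex combination. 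The only loose phrase is calling $\mathcal M$ ``jointly strictly convex in $(m,w)$'' --- the perspective term $mH^*(-w/m)$ is not jointly strictly convex, but your parenthetical already supplies the correct two-step reasoning (strict convexity of $F$ pins down $m$, then strict convexity of $H^*$ for fixed $m$ pins down $w$), which is exactly how the paper argues.
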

\begin{proof}
We adapt the proof techniques from \cite[Lemma 2]{Card1order},  where a proof was given for $\Omega=\mathbb T^d$, 
to accommodate Neumann boundary conditions. 

Consider the convex set $\mathscr I\subset\mathcal C^1(Q_T)$ as in \eqref{eq:Iset}, and define the lower semicontinuous function $\Upsilon_{\mathscr I}:\mathcal C^1(Q_T)\to\R\cup\{+\infty\}$ as
\[
\Upsilon_{\mathscr I}(v)=\begin{cases}
	0\quad&\text{if } v\in\mathscr I\\
	+\infty &\text{otherwise}.
\end{cases}
\]
We use $\Upsilon_{\mathscr I}$ as a penalty to enforce the terminal condition.

For $v\in\mathcal C^1(Q_T)$, let
$$
\mathcal F(v)=-\into v(0,x)m_0(x) dx-\intb jv dx dt+\Upsilon_{\mathscr I}(v) .
$$
Let
$$
E:=\mathcal C(Q_T)\times \mathcal C(Q_T;\R^N) .
$$
For $(a,b)\in E$, we set
$$
\mathcal G(a,b)=\intif F^*\big(t,x,-a+{H(x,b)}\big) dxdt .
$$
The infimum $\inf\limits_{v\in\mathscr I}\mathcal I(v)$ can be expressed as
\begin{equation}\label{P}
\inf\limits_{v\in\mathcal C^1(Q_T)}\left\{\mathcal F(v)+\mathcal G(\bo{D}v)\right\} ,
\end{equation}
where $\bo{D}:\mathcal C^1(Q_T)\to E$ is defined in \eqref{bold}. Following \cite[Chapter III]{Ekeland}, we define the functional
$$
\Phi:\mathcal C^1(Q_T)\times E\to \R ,\qquad \Phi(v,\bo p)=\mathcal F(v)+\mathcal G(\bo Dv-\bo p) .
$$

Accordingly, problem \eqref{P} takes the form
$$
\inf\limits_{v\in\mathcal C^1(Q_T)}\Phi(v,0) ,
$$
and the dual problem is (see \cite[page 47]{Ekeland})
\begin{equation}\label{Pstar}
	\max\limits_{\bo p^*\in E^*}\{-\Phi^*(0,\bo p^*)\} ,
\end{equation}
where $E^*$ is the topological dual of $E$, whereas $\Phi^*$ stands for the convex conjugate of $\Phi$. 

Due to Assumption \ref{hp}, $H$, $m_0$, and $j$ are continuous. Moreover, thanks to Remark \ref{rem21}, $F^*$ is also continuous. Since $\Upsilon_{\mathscr I}$ is lower semicontinuous, we have that both $\mathcal F$ and $\mathcal G$ are lower semicontinuous functions, respectively on $\mathcal C^1(Q_T)$ and $E$.

Thanks to Remark \ref{rem21} and Remark \ref{rem23}, $F^*$ is convex and non-decreasing and $H$ is convex. Thus, $\mathcal F$ and $\mathcal G$ are convex. Additionally,
the function $v(t,x)=\psi(x)$ satisfies $\mathcal F(v)+\mathcal G(\bo Dv)<+\infty$ and $\mathcal G$ is continuous at $\bo Dv$. Then, we can apply \cite[Theorem III.4.1]{Ekeland}, which states the equality of the quantities in \eqref{P} and \eqref{Pstar}, i.e.,
\[
\inf\limits_{v\in\mathscr I}\mathcal I(v)=\max\limits_{\bo p^*\in E^*}\{-\Phi^*(0,\bo p^*)\} ,
\]
where the convex conjugate is taken in both variables.

Computing the conjugate in \eqref{Pstar}, and defining $\bo q=\bo Dv-\bo p$, we find
\begin{align*}
\Phi^*(0,\bo p^*)&=\sup\limits_{(v,\bo p)\in\mathcal C^1(Q_T)\times E}\big\{\langle \bo p^*,\bo p\rangle -\mathcal F(v)-\mathcal G(\bo Dv-\bo p)\big\}\\
&=\sup\limits_{(v,\bo q)\in\mathcal C^1(Q_T)\times E}\big\{\langle \bo p^*,\bo Dv\rangle+\langle -\bo p^*,\bo q\rangle -\mathcal F(v)-\mathcal G(\bo q)\big\}\\
&=\!\!\!\sup\limits_{v\in\mathcal C^1(Q_T)}\!\!\big\{\langle \bo D^*\bo p^*,v\rangle-\mathcal F(v)\big\}+\sup\limits_{\bo q\in E}\big\{\langle -\bo p^*,\bo q\rangle -\mathcal G(\bo q)\big\}=\mathcal F^*(\bo D^*\bo p^*)+\mathcal G^*(-\bo p^*),
\end{align*}
where $\bo D^*(\cdot):E^*\to \mathcal C^1(Q_T)^*$ denotes the adjoint operator of $\bo D$, i.e., for $\bo p^*\in E^*$ and $v\in\mathcal C^1(\Omega)$,
$$
\bo D^*(\bo p^*)(v)=\bo p^*(\bo D v) .
$$
Note that each element of $E^*$ can be represented as a couple $(m,w)\in\mathcal C(Q_T)^*\times\mathcal C(Q_T;\R^N)^*$.
Hence, we have
\begin{equation}\label{eq:rock}
\begin{split}
\inf\limits_{v\in\mathscr I}\mathcal I(v)&=\max\limits_{\substack{m\in\mathcal C(Q_T)^*\\w\in\mathcal C(Q_T;\R^N)^*}}\{-\mathcal F^*(\bo D^*(m,w))-\mathcal G^*(-m,-w)\}\\
&=-\min\limits_{\substack{m\in\mathcal C(Q_T)^*\\w\in\mathcal C(Q_T;\R^N)^*}}\{\mathcal F^*(\bo D^*(m,w))+\mathcal G^*(-m,-w)\} .
\end{split}
\end{equation}

We recall that $\mathcal C(Q_T)^*$ is the set of signed Radon measures over $Q_T$, and the same applies to $\mathcal C(Q_T;\R^N)^*$. For the first term, we have
\begin{align*}
\begin{array}{c}
\displaystyle\mathcal F^*(\bo D^*(m,w))=\sup\limits_{v\in\mathcal C^1(Q_T)}\left\{ \intif v_t dm(t,x)+\intif \nabla v\cdot dw(t,x)-\mathcal F(v)\right\}\\
\displaystyle=\sup\limits_{v\in\mathscr I}\left\{ \intif v_t dm(t,x)+\intif \nabla v\cdot dw(t,x)+\into v(0,x) m_0(x) dx+\intb{jv dx dt}\right\}.
\end{array}
\end{align*}

To establish that $\mathcal F^*(\bo D^*(m,w))$ is finite if and  only if $(m,w)$ solves \eqref{eq:fp}, let  $J(v)$ be the functional to be maximized on the right-hand side of the 
previous identity. If $(m,w)$ does not solve \eqref{eq:fp}, then \eqref{either} fails, i.e. there exists $\bar\xi\in W^{1,\infty}(Q_T)$ and $\alpha\neq0$ with $\bar\xi(T,\cdot)=0$ such that
$$
\intif (m\bar\xi_t+w\cdot\nabla\bar\xi) dxdt+\into \bar\xi(0,x) m_0(x) dx+\intb j\bar\xi dx dt=\alpha .
$$
By replacing $\bar\xi$ with $-\bar\xi$ when necessary, we can assume that $\alpha>0$ without loss of generality.
We choose $v_k=\psi+k\bar\xi$, where $k>0$. Then $v_k\in\mathscr I$. Integrating by parts, we obtain
\begin{equation*}
\begin{split}
J(v_k)=\intif\nabla \psi\cdot dw(t,x)+\into \psi(x) m_0(x) dx+\intb j\psi dx dt+k\alpha .
\end{split}
\end{equation*}
As $k\to+\infty$, we obtain
$$
\lim\limits_{k\to+\infty}J(v_k)=+\infty\implies \mathcal F^*(\bo D^*(m,w))=+\infty .
$$
If $(m,w)$ satisfies \eqref{eq:fp}, then from \eqref{either} we have $J(\xi)=0$ for every test function $\xi\in W^{1,\infty}$ with $\xi(T)=0$. Since $\psi-v$ can be used as a test function $\forall v\in\mathscr I$, using the linearity of $J$, we have
$$
J(\psi-v)=0\implies J(v)=J(\psi)\implies \mathcal F^*(\bo D^*(m,w))=J(\psi) .
$$
In conclusion, we have
\begin{align*}
&\mathcal F^*(\bo D^*(m,w))=\\
&\begin{cases}
\displaystyle\intif\nabla \psi\cdot dw(t,x)+\into \psi(x) m_0(x) dx+\intb j\psi dx dt\ &\hbox{if $(m,w)$ solves \eqref{eq:fp}} ,\\
\displaystyle +\infty &\text{otherwise} .
\end{cases}
\end{align*}
Similar calculations for $\mathcal G^*$, based on \cite[Lemma 2]{Card1order}, yield
\begin{equation*}
\mathcal G^*(-m,-w)=\left\{\begin{array}{lr}
\displaystyle\intif\left[F(t,x,m)+m {L\left(x,\frac wm\right)}\right]dxdt , &\text{if }m\ge0\quad (m,w)\in L^1,\\
\displaystyle +\infty &\text{otherwise} .
\end{array}\right.
\end{equation*}
Hence, $\mathcal F^*(\bo D^*(m,w))$ and $\mathcal G^*(-m,-w)$ are both finite if and only if $(m,w)\in\mathscr M$. Substituting the representation formulas of $\mathcal F$ and $\mathcal G$ in \eqref{eq:rock}, we finally get
\[
\inf\limits_{v\in\mathscr I}\mathcal I(v)=-\min\limits_{(m,w)\in\mathscr M}\mathcal M(m,w) .
\]

To prove the uniqueness of a minimizer for $\mathcal M$, assume by contradiction that there are two distinct minimizers of $\mathcal M$, $(m^0,w^0)\in\mathscr M$ and $(m^1,w^1)\in\mathscr M$. Let $a$ be the minimum value attained. Then, for all $s\in[0,1]$,  $(m^s,w^s):=\big((1-s)m^0+s m^1,(1-s)w^0+s w^1\big)\in\mathscr M$.  By the strict convexity of $F$ and $H^*$ (the last one follows from Lemma \ref{lem:strictHstar}), we have
\begin{equation}\label{reductioababsurdum}
\begin{split}
\mathcal M(m^s,w^s)&<\into \psi(x)m_0(x) dx+\intb \psi j dx dt+\into w^s\cdot\nabla\psi dxdt\\
&+(1-s)\intif F(t,x,m^0) dxdt+s\intif F(t,x,m^1) dxdt\\
&+\intif m^s{L\left(x,\frac{w^s}{m^s}\right)} dxdt .
\end{split}
\end{equation}
To estimate the last integral, we use the convexity of $L$ {in the last variable}. Let $\lambda=\frac{(1-s)m^0}{m^s}$. Then, we have $1-\lambda=\frac{sm^1}{m^s}$. Hence, we get
\begin{equation*}
\begin{split}
m^s {L\left(x,\frac{w^s}{m^s}\right)}=m^s{L\left(x,\lambda\left(\frac{w^0}{m^0}\right)+(1-\lambda)\left(\frac{w^1}{m^1}\right)\right)}\\
< (1-s)m^0 {L\left(x,\frac{w^0}{m^0}\right)}+s m^1 {L\left(x,\frac{w^1}{m^1}\right)}.
\end{split}
\end{equation*}
Using this relation in \eqref{reductioababsurdum}, we obtain
\[
\mathcal M(m^s,w^s)<(1-s)\mathcal M(m^0,w^0)+s\mathcal M(m^1,w^1)=(1-s)a+sa=a ,
\]
which is not possible because $a$ is the minimum of $\mathcal M$. This implies $m^0=m^1$ and $\frac{w^0}{m^0}=\frac{w^1}{m^1}$, i.e. $w^0=w^1$ for $m^0\neq 0$, $m^1\neq 0$. Since $w^0=0$ a.e. in $\{m^0>0\}$, we must have $w^1=0$, and the uniqueness is proved.

Therefore, we have established a duality relationship between the primal problem of minimizing $\mathcal I(v)$ over $v\in\mathscr I$ and the dual problem of minimizing $\mathcal M(m,w)$ over $(m,w)\in\mathscr M$. We have also proved the existence and uniqueness of a minimizer $(m,w)$ for the functional $\mathcal M$ under Assumption \ref{hp}. 
To conclude, we need to check the integrability conditions on the minimizer. 
Since  $\min\limits_{(m,w)\in\mathscr M}\mathcal M(m,w)<+\infty$, we have $m\in L^p(Q_T)$ and $w\in L^\frac{pr}{pr-p+1}(Q_T)$ thanks to Proposition \ref{prop_regM}. 
\end{proof}

\section{A relaxation of the problem}\label{sec5}

Because Problem \ref{var1} may not have a solution, we introduce the following relaxed problem by enlarging the set $\mathscr I$.
\begin{problem}\label{var3}
Consider the set $\mathscr K$ of all pairs $(v,\alpha)\in BV(Q_T)\times L^{q'}(Q_T)$ such that  ${\nabla v\in L^r(Q_T)}$ and the pair $(v,\alpha)$ satisfies
\begin{equation}\label{eq:fpgen}
\begin{cases}
-v_t+{H(x,\nabla v)}\le\alpha ,\\
v(T,x)\le\psi(x) ,
\end{cases}
\end{equation}
where the prior inequalities are interpreted as follows. 
The first inequality means that 
for each $\phi\in\mathcal C_c^{\infty}((0,T]\times\overline\Omega)$, $\phi\ge0$, we have
\begin{equation}\label{eq:distribution}
\intif \Big(v\phi_t+{H(x,\nabla v)}\phi\Big) dxdt\le\intif\alpha\phi dxdt+\into \psi(x)\phi(T,x) dx .
\end{equation}
The second inequality in \eqref{eq:fpgen} is interpreted in the trace sense since
$v\in BV(Q_T)$ and, therefore, the trace $v_{|\partial\Omega}$ is well defined.

Find $(v,\alpha)\in\mathscr K$ minimizing the functional
\[
\mathcal K(v,\alpha):=\intif F^*(t,x,\alpha) dxdt-\into v(0,x)dm_0(x)-\intb jv dx dt .
\]
\end{problem}
In the definition  of $\mathscr K$, we can replace $Q_T$
with a domain $[a,b]\times A$, where $A\subset\R^N$. In that case, we denote that set by $\mathscr K([a,b]\times A)$.

First, we prove that Problem \ref{var3} is a relaxation of Problem \ref{var1}, since both have the same infimum value. Hence, the duality with Problem \ref{var2} is preserved.
\begin{teo}\label{thm:duality}
Consider the setting of Problems \ref{var1}-\ref{var3}. 
Suppose Assumptions \ref{hp} and \ref{hp2} hold. Then, we have
\begin{equation}\label{FinalDuality}
\inf\limits_{(v,\alpha)\in\mathscr K}\mathcal K(v,\alpha)=\inf\limits_{v\in\mathscr I}\mathcal I(v) =-\min\limits_{(m,w)\in\mathscr M}\mathcal M(m,w) .
\end{equation}
\end{teo}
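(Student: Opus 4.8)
The chain of equalities in \eqref{FinalDuality} splits into two parts: the second equality $\inf_{v\in\mathscr I}\mathcal I(v)=-\min_{(m,w)\in\mathscr M}\mathcal M(m,w)$ is exactly Theorem \ref{duality}, so nothing new is needed there. The work is entirely in proving the first equality $\inf_{(v,\alpha)\in\mathscr K}\mathcal K(v,\alpha)=\inf_{v\in\mathscr I}\mathcal I(v)$. Since $\mathscr I\subset\mathscr K$ via the identification $v\mapsto(v,-v_t+H(\nabla v))$ (a $\mathcal C^1$ function with $v(T,\cdot)=\psi$ certainly lies in $BV(Q_T)$, has $\nabla v\in L^r$, satisfies \eqref{eq:fpgen} with equality, and has $-v_t+H(\nabla v)\in L^{q'}$ because it is continuous on the compact set $Q_T$), and because $\mathcal K(v,-v_t+H(\nabla v))=\mathcal I(v)$ on that subset (noting $m_0\,dx$ replaces $dm_0$ here when $m_0\in L^1$), we immediately get
$$
\inf_{(v,\alpha)\in\mathscr K}\mathcal K(v,\alpha)\le\inf_{v\in\mathscr I}\mathcal I(v).
$$
So the real content is the reverse inequality: every admissible pair $(v,\alpha)\in\mathscr K$ can be approximated, in the sense that its $\mathcal K$-energy is bounded below by $\inf_{\mathscr I}\mathcal I$.

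\textbf{Main step: approximation of subsolutions by smooth subsolutions.} The plan is to take $(v,\alpha)\in\mathscr K$ with $\mathcal K(v,\alpha)<+\infty$ and produce a sequence $v_n\in\mathscr I$ with $\mathcal I(v_n)\le\mathcal K(v,\alpha)+o(1)$. First, since increasing $\alpha$ only increases $F^*(t,x,\alpha)$ (as $F^*$ is non-decreasing, Remark \ref{rem23}) and preserves membership in $\mathscr K$, we may assume $\alpha=-v_t+H(\nabla v)$ exactly, i.e.\ reduce to the "saturated" case where $v$ is a weak subsolution and $\alpha$ equals the HJ expression. Next I would regularize $v$: the key tool is Lemma \ref{lem:regularization} (which the text flags as the place where the rectangularity of $\Omega$ and evenness of $H$ are used). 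The idea is to extend $v$ beyond $\Omega$ by successive reflections across the faces of the rectangle; evenness of $H$ in each coordinate guarantees that the reflected function is still a subsolution of $-v_t+H(\nabla v)\le\alpha$ (with $\alpha$ reflected accordingly). Having an extension on a neighborhood of $\overline\Omega$, one mollifies in $x$ (and handles $t$ carefully near $t=T$, shifting slightly and using convexity of $H$ via Jensen to keep the mollified function a subsolution of a slightly perturbed right-hand side $\alpha_n\to\alpha$ in $L^{q'}$). One then adjusts the terminal value: replace $v_n$ by $v_n+(\psi-v_n(T,\cdot))$ extended suitably, or subtract a small linear-in-$t$ correction, to enforce $v_n(T,\cdot)=\psi$ exactly while only perturbing the energy by $o(1)$; since the original inequality was $v(T,\cdot)\le\psi$, adding a nonnegative terminal correction that decays is compatible with keeping the subsolution property (using $-\partial_t$ of a nonincreasing-in-$t$ bump is the right sign). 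Finally, one checks continuity of the three terms of $\mathcal K$ along the approximation: $\int F^*(t,x,\alpha_n)\to\int F^*(t,x,\alpha)$ by the growth bound \eqref{hp:FFstar} and $L^{q'}$-convergence of $\alpha_n$ plus dominated convergence; $\int v_n(0,\cdot)\,m_0\to\int v(0,\cdot)\,dm_0$ from convergence of the traces at $t=0$ (here BV trace theory and the mollification give $v_n(0,\cdot)\to v(0,\cdot)$ in $L^1$, pairing against $m_0\in L^1$); and the boundary integral $\int_0^T\int_{\partial\Omega} j v_n\to\int_0^T\int_{\partial\Omega} j v$ from convergence of the spatial traces on $\partial\Omega$ (again BV trace continuity under the regularization, which is why the reflection extension must be set up to respect $\partial\Omega$). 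This gives $\mathcal I(v_n)=\mathcal K(v_n,\alpha_n)\to\mathcal K(v,\alpha)$, hence $\inf_{\mathscr I}\mathcal I\le\mathcal K(v,\alpha)$, and taking the infimum over $(v,\alpha)\in\mathscr K$ closes the loop.

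\textbf{Expected main obstacle.} The delicate point is the boundary-term convergence $\int_0^T\int_{\partial\Omega} j v_n\,dx\,dt\to\int_0^T\int_{\partial\Omega} j v\,dx\,dt$, because traces of BV functions are only weakly continuous in general and the mollification+reflection procedure must be arranged so that the trace on $\partial\Omega$ actually converges strongly in $L^1(\partial\Omega\times(0,T))$. This is exactly where Assumption \ref{hp2}\eqref{item5} (rectangular $\Omega$) is essential: on a box, "reflection across a face" is an isometry fixing that face, so the extended function has the same trace on that face as $v$, and mollifying in the interior of the doubled domain then yields traces converging to $v|_{\partial\Omega}$. A secondary technical nuisance is the interplay between time-mollification and the terminal condition $v(T,\cdot)=\psi$, which forces a one-sided mollifier in $t$ and a careful accounting of the sign of the terminal correction so that \eqref{eq:distribution} is preserved throughout; but this is standard once the reflection step is in place. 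I would present Lemma \ref{lem:regularization} as the engine and keep the proof of Theorem \ref{thm:duality} itself short: reduce to saturated $\alpha$, invoke the regularization lemma, enforce the terminal condition, and pass to the limit in the three terms of $\mathcal K$.
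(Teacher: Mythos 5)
Your overall strategy matches the paper's proof: one inequality from the embedding $\mathscr I\hookrightarrow\mathscr K$, the other from the regularization Lemma~\ref{lem:regularization} plus termwise passage to the limit. The structure is right, but two details in your sketch do not hold up as stated.

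First, the proposed reduction to the ``saturated'' case $\alpha=-v_t+H(\nabla v)$ is not legitimate. For $(v,\alpha)\in\mathscr K$ you only know $v\in BV(Q_T)$, so $v_t$ is a measure that may carry a nontrivial singular part; $-v_t+H(\nabla v)$ is then a measure, not an $L^{q'}$ function, and $(v,-v_t+H(\nabla v))$ is in general not an element of $\mathscr K$. The paper instead replaces $\alpha$ by $\alpha^+$ (which only uses $F^*(t,x,a)=0$ for $a\le0$ and keeps $\alpha^+\ge-v_t+H(\nabla v)$ in the distributional sense), regularizes the pair $(v,\alpha^+)$, and then exploits the pointwise inequality $-v^\eps_t+H(\nabla v^\eps)\le\alpha^\eps$ together with monotonicity of $F^*$ to bound $F^*(t,x,-v^\eps_t+H(\nabla v^\eps))\le F^*(t,x,\alpha^\eps)$. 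Your subsequent steps do not actually need the saturation, so the argument can be repaired by dropping it, but as written this step is a gap.

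Second, the convergence of the initial-time term $-\int_\Omega v^\eps(0)\,m_0\,dx$ is subtler than ``$v^\eps(0,\cdot)\to v(0,\cdot)$ in $L^1$ by BV trace theory.'' Traces at $t=0$ of BV functions do not converge strongly under mollification; the paper only establishes a one-sided $\limsup$ bound (see \eqref{liminf}), using the partial H\"older estimate of Lemma~\ref{lem:holder} to compare $v(0,\cdot)$ with $v$ at nearby times. That one-sided bound suffices because of the sign $m_0\ge0$. Relatedly, the boundary term turns out to be easier than you anticipate: the paper does not need a delicate BV-trace argument on $\partial\Omega$, because the approximation $v^\eps-v$ converges in $L^1([0,T];W^{1,1}(\Omega))$, and the trace operator $W^{1,1}(\Omega)\to L^1(\partial\Omega)$ is bounded, giving $\iint_{\partial\Omega}|v^\eps-v|\to0$ directly. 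Your intuition about why the rectangular domain and evenness of $H$ are needed (so that reflection preserves subsolutions) is correct, but that is encapsulated entirely inside Lemma~\ref{lem:regularization} rather than resurfacing in the trace convergence.
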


To prove Theorem \ref{thm:duality}, we first establish two technical lemmas that will aid in handling the relaxed problem.
To regularize the pairs $(v,\alpha)\in\mathscr K$, the first lemma provides a smoothing technique that preserves \eqref{eq:fpgen}.

\begin{lem}\label{lem:regularization}
Consider the setting of Problems \ref{var1} and \ref{var3}. 	
Suppose Assumptions \ref{hp} and \ref{hp2} hold, and let $(v,\alpha)\in\mathscr K$. Then, for all $\eps>0$, there exists $(v^\eps,\alpha^\eps)\in\mathscr I\times\mathcal C^\infty(Q_T)$, which satisfies \eqref{eq:fpgen} in the classical sense, and such that $(v^\eps,\alpha^\eps,\nabla v^\eps)\to(v,\alpha,\nabla v)$ in $L^1(Q_T)\times L^{q'}(Q_T)\times L^r(Q_T)$, as $\eps\to0$.
\end{lem}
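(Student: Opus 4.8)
The plan is to regularize $(v,\alpha)$ in three stages: first extend the pair beyond $\Omega$ by reflection across the faces of the rectangle, then mollify in time and space, and finally correct the terminal condition and the small error introduced by mollification so as to land in $\mathscr I \times \mathcal C^\infty(Q_T)$ while keeping the subsolution inequality \eqref{eq:fpgen} in the classical sense. The reflection step is where Assumption \ref{hp2} enters: since $\Omega = \prod_i (a_i,b_i)$, one can reflect $v$ successively across each hyperplane $\{x_i = a_i\}$ and $\{x_i = b_i\}$ to obtain a function $\tilde v$ on a slightly larger rectangle $\Omega' \supset\supset \Omega$ (and periodically, if one wishes, on all of $\R^N$ in the space variables). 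Because $H$ is even in each $p_i$ (Item~\ref{item6}), the reflected gradient $\nabla \tilde v$, whose $i$-th component changes sign under reflection in the $i$-th variable, satisfies $H(\nabla \tilde v) = H(\nabla v)$ on the reflected copies; hence the distributional inequality $-\tilde v_t + H(\nabla \tilde v) \le \tilde\alpha$ is preserved across the interfaces. One must check that no spurious positive singular part of $\tilde v_t$ or of the normal-derivative jump is created at the reflection hyperplanes: since $v \in BV$ with $\nabla v \in L^r$, the trace of $v$ from both sides of $\{x_i = a_i\}$ agrees (even reflection), so $\tilde v$ has no jump there, and $\tilde v_t$ picks up no measure concentrated on $\{x_i = a_i\}$; the tangential gradient matches and the normal gradient flips sign, which is exactly the behavior of a genuine (weak) function, so the inequality tested against $\phi \ge 0$ supported near the interface goes through by splitting the integral and using the change of variables $x_i \mapsto 2a_i - x_i$.

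Next I would mollify: set $v^{\eps} = \tilde v * \rho_\eps$ and $\alpha^{\eps} = \tilde\alpha * \rho_\eps$ with a standard smooth mollifier $\rho_\eps$ in $(t,x)$, supported in a ball of radius $\eps$; for $\eps$ small enough the domain of definition comfortably contains $[0,T]\times\overline\Omega$ because of the extension. Convolution commutes with $\partial_t$ and $\nabla$, so $-v^\eps_t + \nabla$-quantities are smooth, and $(v^\eps, \alpha^\eps, \nabla v^\eps) \to (v,\alpha,\nabla v)$ in $L^1 \times L^{q'} \times L^r$ by standard approximation (here $\nabla v \in L^r(Q_T)$ and $\alpha \in L^{q'}(Q_T)$ are used). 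The subtlety is that $H$ is convex but nonlinear, so mollification does not directly preserve $-v_t + H(\nabla v) \le \alpha$; however, by Jensen's inequality applied to the convex function $H$,
\[
H(\nabla v^\eps(t,x)) = H\!\left(\int \nabla \tilde v(s,y)\,\rho_\eps(t-s,x-y)\,ds\,dy\right) \le \int H(\nabla \tilde v(s,y))\,\rho_\eps(t-s,x-y)\,ds\,dy,
\]
and combining with $-\tilde v_t + H(\nabla \tilde v) \le \tilde\alpha$ (which holds distributionally, hence after convolution against the nonnegative kernel $\rho_\eps$ pointwise) gives $-v^\eps_t + H(\nabla v^\eps) \le \alpha^\eps$ pointwise and classically. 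Thus the mollified pair already satisfies the HJ inequality in the classical sense.

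It remains to enforce the exact terminal condition $v^\eps(T,\cdot) = \psi$ rather than $v^\eps(T,\cdot) \le \psi + o(1)$. I would do this by adding a small correction: since $v(T,\cdot) \le \psi$ in the trace sense and $v^\eps(T,\cdot) \to v(T,\cdot)$ in $L^1(\Omega)$, one sets $\hat v^\eps := v^\eps + \chi_\eps(t)(\psi(x) - v^\eps(T,x)) + \delta_\eps(T-t)$ — more cleanly, first translate in time so that the ``$T$-slice'' of the mollified function is read slightly before $T$, then glue in $\psi$ near $t=T$ using a cutoff in $t$, and finally add a term of the form $c_\eps(T-t)$ with $c_\eps \to 0$ to absorb the (small, controlled) error in the inequality created by the gluing, using that $\partial_t[c_\eps(T-t)] = -c_\eps < 0$ helps the subsolution inequality. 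One checks that the extra terms tend to $0$ in $L^1(Q_T)$ together with their gradients in $L^r$, so convergence is not spoiled, and that for $\eps$ small the modified $\alpha^\eps$ (increased by a vanishing amount) is still in $\mathcal C^\infty$ and the pair lies in $\mathscr I \times \mathcal C^\infty(Q_T)$. The main obstacle is precisely this last bookkeeping near $t = T$: one must simultaneously (i) restore the equality $v^\eps(T) = \psi$, (ii) keep $-v^\eps_t + H(\nabla v^\eps) \le \alpha^\eps$ classically everywhere including the gluing region, and (iii) not destroy the $L^1 \times L^{q'} \times L^r$ convergence — the time-translation-plus-cutoff-plus-linear-correction device handles all three, but verifying the inequality survives the cutoff requires care, exploiting that $\psi \in \mathcal C^1(\overline\Omega)$ so $H(\nabla\psi)$ is bounded and the cutoff can be made to cost an arbitrarily small amount in the $\alpha$ variable.
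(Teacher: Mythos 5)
Your plan follows the paper's proof essentially verbatim: reflect across the faces of the rectangle $\Omega$ (this is precisely where Assumption~\ref{hp2}'s rectangular domain and even $H$ enter), mollify in $(t,x)$ and preserve the subsolution inequality via Jensen's inequality for the convex $H$, then restore the terminal condition with a time-translation plus a cutoff gluing to $\psi(x)+\lambda(T-t)$, absorbing the gluing error into $\alpha^\eps$. One sign caution on the final step: the linear correction coefficient must be negative (the paper takes $\lambda=-\norminf{H(\nabla\psi)}$), since adding $\lambda(T-t)$ to $v^\eps$ changes $-v^\eps_t$ by $+\lambda$ and only a negative $\lambda$ helps the subsolution inequality; your remark that ``$\partial_t[c_\eps(T-t)]=-c_\eps<0$ helps'' has this reversed, though the slack you also build in by increasing $\alpha^\eps$ on the shrinking gluing region repairs it.
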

\begin{proof} The idea is to use an approximation by convolution with smooth mollifiers. Since
by convolution 	
	 we can only expect a convergence in the interior, we need to enlarge the domain $Q_T$ to ensure the convergence in the whole $Q_T$.

We split the proof into three steps: first, we discuss how to extend the domain, then we address the extensions by reflection, and finally, we use convolution with a smooth mollifier to prove the lemma. 

\emph{Step 1. Extension of the domain.} Let $(v,\alpha)\in\mathscr K$. Recall that, thanks to Item \ref{item5} of Assumption $2$, $\Omega$ is a rectangular domain.

First, we extend $(v,\alpha)$ to a larger domain ${Q^\eps_T\supset Q_T}$. To accomplish this, we extend $\Omega=\prod\limits_{i=1}^N(a_i,b_i)$ by reflecting each interval $(a_i,b_i)$ with respect to $a_i$ and $b_i$. New intervals are  $(2a_i-b_i,2b_i-a_i)$, and the extension of $\Omega$ is $U=\prod\limits_{i=1}^N(2a_i-b_i,2b_i-a_i)$.

Next, we define auxiliary functions $\vfi_1$, $\vfi_2$ and $\vfi_3$ as follows.
\begin{equation}\label{vfi}
\vfi_1^i(x_i)=2a_i-x_i ,\qquad \vfi_2^i(x_i)=x_i ,\qquad \vfi_3^i(x_i)=2b_i-x_i ,\qquad x_i\in[a_i,b_i] .
\end{equation}

Let $\bo j=(j_1,\dots,j_N)$ be a multi-index in $\{1,2,3\}^N$, and
\begin{equation}\label{Phij}
\varPhi_{\bo j}(x)=(\vfi_{j_1}^1(x),\dots,\vfi_{j_n}^n(x)) ,\qquad x\in\Omega .
\end{equation}
Then, we can partition $U$ as
$$
U=\bigcup\limits_{\bo j\in\{1,2,3\}^N}\Omega_{\bo j} ,\qquad\text{with }\Omega_{\bo j}=\big\{\varPhi_{\bo j}(x)\ |\ x\in\Omega\big\} .
$$
In the preceding decomposition, if $j_i=2$ there is no reflection of the $i$-th interval $[a_i,b_i]$, if $j_i=1$ the reflection is on the left side, and if $j_i=3$ it is on the right side. From \eqref{vfi}, we observe that
\begin{equation}\label{propvfi}
\begin{split}
\big(\vfi^i_j\big)^{-1}(x_i)=\vfi^i_j(x_i) ,\qquad (\vfi^i_j)'(x_i)=(-1)^j ,\\ 
\mathrm{Jac} \varPhi_{\bo j}(x)=\mathrm{Diag}[(-1)^{j_1},\dots,(-1)^{j_N}] ,\qquad j=1,2,3 .
\end{split}
\end{equation}
Then, we define the extension of $Q_T^\eps$ of $Q_T$ to be
\begin{equation}\label{QTeps}
Q_T^\eps=[-2\eps,T+2\eps]\times U .
\end{equation}

\emph{Step 2. Extension of the functions.} In this step, we consider a pair $(v,\alpha)\in\mathscr K$ and extend it to $Q_T^\eps$, modifying slightly the time dependence. 

For $\lambda\in\R$, which we choose later, let $(z^\eps,\beta^\eps)\in (BV\times L^{q'})([-2\eps,T+2\eps]\times\Omega)$ be defined in the following way:
\begin{equation*}
z^\eps(t,x)=\left\{
\begin{array}{lll}
v\big(t+2\eps,x\big) \quad& \text{if }\hspace{-0.2cm}&(t,x)\in[-2\eps,T-2\eps]\times\Omega ,\\
\psi(x)+\lambda(T-2\eps-t) \quad& \text{if }\hspace{-0.2cm}&(t,x)\in[T-2\eps,T+2\eps]\times\Omega ,
\end{array}
\right.
\end{equation*}
and
\begin{equation*}
\beta^\eps(t,x)=\left\{
\begin{array}{lll}
\alpha\big(t+2\eps,x\big) \quad& \text{if }\hspace{-0.2cm}&(t,x)\in[-2\eps,T-2\eps]\times\Omega ,\\
0 \quad& \text{if }\hspace{-0.2cm}&(t,x)\in[T-2\eps,T+2\eps]\times\Omega .
\end{array}
\right.
\end{equation*}

Then, we extend $(z^\eps, \beta^\eps)$ by reflection in $Q_T^\eps$, defining, for $(t,x)\in [-2\eps,T+2\eps]\times\Omega_{\bo j}$,
\[
\big(\bar v^\eps(t,x),\bar\alpha^\eps(t,x))=\Big(z^\eps\big(t,\varPhi_{\bo j}(x)\big),\beta^\eps\big(t,\varPhi_{\bo j}(x)\big)\Big) .
\]
We note that the space reflection preserves the weak differentiability.
Hence, $(\bar v^\eps,\bar\alpha^\eps)\in BV(Q_T^\eps)\times L^{q'}(Q_T^\eps)$, $\nabla\bar v^\eps\in L^r(Q_T^\eps)$, and $(\bar v^\eps,\bar\alpha^\eps,\nabla\bar v^\eps)\to(v,\alpha,\nabla v)$ in $L^1(Q_T)\times L^{q'}(Q_T)\times L^r(Q_T)$ as $\eps\to 0$. 

{Similarly, we consider an extension of the Hamiltonian $H$ in the space variable. The extension considered is $\tilde H:U\times\R^N\to\R$, defined as
\begin{equation}\label{extH}
\tilde H(x,p):=H\big(\Phi_{\bo j}(x),p\big)\,.
\end{equation}}

Now, we claim that
\begin{equation}\label{eq:vbar}
-\bar v^\eps_t+{\tilde H(x,\nabla\bar v^\eps)}\le\bar\alpha^\eps\qquad\hbox{ in $Q_T^\eps$ and in the sense of \eqref{eq:distribution}.}
\end{equation}
To prove the claim, we analyze the integrals appearing in \eqref{eq:distribution}. First, recall that $H$ is even in $p_i$ for $1\le i\le n$. Hence, for $x\in\Omega_{\bo j}$,
we have
$$
{\tilde H(x,\nabla\bar v^\eps)=H\Big(\Phi_{\bo j}(x),\nabla z^\eps\big(t,\varPhi_{\bo j}(x)\big)\mathrm{Jac} \varPhi_{\bo j}(x)\Big)=H\Big(\Phi_{\bo j}(x),\nabla z^\eps\big(t,\varPhi_{\bo j}(x)\big)\Big).}
$$
Let $\phi\in\mathcal C_c^\infty((-2\eps,T+2\eps]\times \overline U)$, $\phi\ge0$. Then, {using \eqref{extH},} we have
\begin{gather}\label{est_vbar}
\int_{-2\eps}^{T+2\eps}\int_U\Big(\bar v^\eps\phi_t+{\tilde H(x,\nabla \bar v^\eps)}\phi\Big) dxdt\\
=\sum\limits_{\bo j\in\{1,2,3\}^N}\!\int_{-2\eps}^{T+2\eps}\!\!\!\int_{\Omega_{\bo j}}\!\Big[z^\eps\big(t,\varPhi_{\bo j}(x)\big)\phi_t(t,x)+{H\Big(\Phi_{\bo j}(x),\nabla z^\eps\big(t,\varPhi_{\bo j}(x)\big)\Big)}\phi(t,x)\Big]dxdt .\notag
\end{gather}
Next, we want to use \eqref{eq:distribution} to simplify the right-hand side. However, to achieve this, we have to work with integrals on $\Omega$. Accordingly, we use the change of variable $\varPhi_{\bo j}(x)=y$. The domain $\Omega_{\bo j}$ gets mapped into $\Omega$, because
from \eqref{propvfi}, we have  $x=\varPhi_{\bo j}(y)$, $|\det\mathrm{Jac}\  \varPhi_{\bo j}(x)|=1$. Then, the integral
corresponding to the index $\bo j$ 
in the previous sum becomes:
\begin{align}\label{est_zeta}
&\int_{-2\eps}^{T+2\eps}\into\Big[z^\eps(t,x) \phi_t\big(t,\varPhi_{\bo j}(x)\big)+{H\big(x,\nabla z^\eps(t,x)\big)} \phi\big(t,\varPhi_{\bo j}(x)\big)\Big] dxdt\notag\\
=&\int_{-2\eps}^{T-2\eps} \into \Big[v(t+2\eps,x) \phi_t\big(t,\varPhi_{\bo j}(x)\big) + {H\big(x,\nabla v(t+2\eps,x)\big)}  \phi\big(t,\varPhi_{\bo j}(x)\big)\Big] dxdt\notag\\
+&\int_{T-2\eps}^{T+2\eps}\into\Big[\big(\psi(x)+\lambda(T-2\eps-t)\big) \phi_t\big(t,\varPhi_{\bo j}(x)\big)+{H\big(x,\nabla\psi(x)\big)} \phi\big(t,\varPhi_{\bo j}(x)\big)\Big] dxdt\\
=&\int_{0}^{T}\into\Big[v(s,x) \phi_t\big(s-2\eps,\varPhi_{\bo j}(x)\big)+{H\big(x,\nabla v(s,x)\big)} \phi\big(s-2\eps,\varPhi_{\bo j}(x)\big)\Big] dxds\notag\\
+&\int_{T-2\eps}^{T+2\eps}\into\Big[\big(\psi(x)+\lambda(T-2\eps-t)\big) \phi_t\big(t,\varPhi_{\bo j}(x)\big)+{H\big(x,\nabla\psi(x)\big)}\phi\big(t,\varPhi_{\bo j}(x)\big)\Big] dxdt .\notag
\end{align}
We analyze the two integrals above. For the first one, we use \eqref{eq:distribution} to get
\begin{align*}
&\int_{0}^{T}\into\Big[v(s,x) \phi_t\big(s-2\eps,\varPhi_{\bo j}(x)\big)+{H\big(x,\nabla v(s,x)\big)} \phi\big(s-2\eps,\varPhi_{\bo j}(x)\big)\Big] dxds\\
\le &\intif \alpha(s,x)\phi(s-2\eps,\varPhi_{\bo j}(x)) dxds+\into \psi(x) \phi(T-2\eps,\varPhi_{\bo j}(x)) dx ,
\end{align*}
whereas, for the second one, we integrate by parts in time to get
\begin{align*}
&\int_{T-2\eps}^{T+2\eps}\into\Big[\big(\psi(x)+\lambda(T-2\eps-t)\big) \phi_t\big(t,\varPhi_{\bo j}(x)\big)+{H\big(x,\nabla\psi(x)\big)} \phi\big(t,\varPhi_{\bo j}(x)\big)\Big] dxdt\\
=&\into z^\eps(T+2\eps,x)\phi(T+2\eps,\varPhi_{\bo j}(x)) dx-\into\psi(x)\phi(T-2\eps,\varPhi_{\bo j}(x)) dx\\
+&\int_{T-2\eps}^{T+2\eps}\into\big[\lambda+{H(x,\nabla\psi(x))}\big]\phi(t,\varPhi_{\bo j}(x)) dxdt .
\end{align*}
We choose $\lambda=-\norminf{{H(x,\nabla\psi)}}$ so that the last integral on the right-hand side is non-positive. Plugging these estimates into \eqref{est_zeta}, we have
\begin{align*}
&\int_{-2\eps}^{T+2\eps}\into\Big[z^\eps(t,x) \phi_t\big(t,\varPhi_{\bo j}(x)\big)+{H\big(x,\nabla z^\eps(t,x)\big)} \phi\big(t,\varPhi_{\bo j}(x)\big)\Big] dxdt\\
\le&\intif \alpha(s,x)\phi(s-2\eps,\varPhi_{\bo j}(x)) dxds+\into z^\eps(T+2\eps,x)\phi(T+2\eps,\varPhi_{\bo j}(x)) dx\\
=&\int_{-2\eps}^{T+2\eps}\int_{\Omega_{\bo j}}\beta^\eps(t,\varPhi_{\bo j}(x))\phi(t,x) dxdt+\int_{\Omega_{\bo j}} z^\eps(T+2\eps,x)\phi(T+2\eps,\varPhi_{\bo j}(x)) dx.
\end{align*}
We combine these estimates in  \eqref{est_vbar} by summing over ${\bo j}$. Accordingly, 
we obtain
$$
\iint_{Q_T^\eps}\Big(\bar v^\eps\phi_t+{\tilde H(x,\nabla \bar v^\eps)}\phi\Big) dxdt\le \iint_{Q_T^\eps}\bar\alpha^\eps \phi dxdt+\int_U\bar v^\eps(T+2\eps,x)\phi(T+2\eps,x) dx ,
$$
which proves that $(\bar v^\eps,\bar\alpha^\eps)$ satisfies  \eqref{eq:distribution}  in $Q_T^\eps$.

\emph{Step 3. Regularization and conclusion.} Now, we consider an approximation by convolution. Let $\eta\in\mathcal C_c^\infty(\R^{N+1})$, with $\eta\ge0$, $\int_{\R^{N+1}}\eta(t,x) dxdt=1$ and $\supp(\eta)\subseteq B_1$. We consider, for $\eps>0$, a smooth mollifier $\eta^\eps$ defined as
\begin{equation}\label{mollifier}
\eta^\eps=\eps^{-(N+1)}\eta\left(\frac{(t,x)}\eps\right)
\end{equation} 

We define $(\tilde v^\eps,\tilde \alpha^\eps)=(\bar v^\eps*\eta^\eps,{\bar \alpha^\eps*\eta^\eps+\omega(\eps)})$, {where $\omega(\eps)$ will be defined later}, and prove that $(\tilde v^\eps,\tilde \alpha^\eps)$ satisfies, in the classical sense,
\begin{equation}\label{eq:tildeveps}
-\tilde v^\eps_t+{\tilde H(x,\nabla\tilde v^\eps)}\le\tilde\alpha^\eps ,\qquad\text{in  }\Big\{(t,x)\in Q_T^\eps\ |\ \mathrm{dist}\big((t,x),\partial_PQ_T^\eps\big)>\eps\Big\} .
\end{equation}
We fix $(t,x)\in Q_T^\eps$, with $\mathrm{dist}\big((t,x),\partial_PQ_T^\eps\big)>\eps$, and set $\xi(s,y)=\eta^\eps(t-s,x-y)$. Observe that $\supp(\xi)\subseteq B_\eps((t,x))\subset Q_T^\eps$ and $\supp(\eta^\eps)\subseteq B_\eps$. Using the convexity of $H$ and \eqref{eq:vbar}, we have
\begin{equation}\label{newestimate}
\begin{split}
&-\tilde v^\eps_t(t,x)+{\tilde H(x,\nabla\tilde v^\eps(t,x))}\\
=&-\iint_{Q_T^\eps}\bar v^\eps(s,y)\eta^\eps_t(t-s,x-y) dsdy+{\tilde H\left(x,\iint_{B_\eps}\nabla\bar v^\eps(t-s,x-y)\eta^\eps(s,y) dsdy\right)}\\
\le&\iint_{Q_T^\eps}\bar v^\eps(s,y)\xi_s(s,y) dsdy+\iint_{B_\eps} {\tilde H(x,\nabla\bar v^\eps(t-s,x-y))}\eta^\eps(s,y) dsdy\\
=&\iint_{Q_T^\eps}\Big(\bar v^\eps(s,y)\xi_s(s,y)+{\tilde H(y,\nabla\bar v^\eps(s,y)) \xi(s,y)\Big)}dsdy\\
&{+\iint_{Q_T^\eps}\Big(\tilde H(x,\nabla\bar v^\eps(s,y))-\tilde H(y,\nabla\bar v^\eps(s,y))\Big)\xi(s,y)\,dsdy}\\
\le&\iint_{Q_T^\eps}\bar\alpha^\eps(s,y)\xi(s,y) dsdy {\,+\iint_{Q_T^\eps}\Big(\tilde H(x,\nabla\bar v^\eps(s,y))-\tilde H(y,\nabla\bar v^\eps(s,y))\Big)\xi(s,y)\,dsdy}\,.
\end{split}
\end{equation}
We estimate the last integral. Recalling that $|x-y|<\eps$ if $y\in\supp\xi$, and using Item~\ref{item6.5} of Assumption~\ref{hp2}, we get
\begin{align*}
\left|\iint_{Q_T^\eps}\Big(\tilde H(x,\nabla\bar v^\eps(s,y))-\tilde H(y,\nabla\bar v^\eps(s,y))\Big)\xi(s,y)\,dsdy\right|\le\iint_{Q_T^\eps}|x-y|^\varsigma(1+|\nabla\bar v^\eps|)^\theta\xi\,dsdy\,.
\end{align*}
Using H\"older's inequality with exponents $\frac r\theta$ and $\frac{r}{r-\theta}$ (recall that $r>\theta$), we can bound from above the last integral:
\begin{align*}
\iint_{Q_T^\eps}|x-y|^\varsigma(1+|\nabla\bar v^\eps|)^\theta\xi\,dsdy&\le\eps^{\varsigma}\left(\iint_{Q_T^\eps}(1+|\nabla\bar v^\eps|)^r\,dsdy\right)^{\frac\theta r}\left(\iint_{B_\eps(t,x)}\xi^{\frac r{r-\theta}}\,dsdy\right)^{\frac{r-\theta}r}\\
&\le C\eps^{\varsigma-\frac{\theta(N+1)}r}(1+\norm{\nabla\bar v^\eps})_{L^r(Q_T^\eps)}^{\theta}\,.
\end{align*}
We call
$$
\omega(\eps):=C\eps^{\varsigma-\frac{\theta(N+1)}r}(1+\norm{\nabla\bar v^\eps})_{L^r(Q_T^\eps)}^{\theta}\,.
$$
By Item~\ref{item6.5} of Assumption~\ref{hp2}, we have $\varsigma r>\theta(N+1)$. Thus,  $\omega(\eps)\to0$. Coming back to \eqref{newestimate}, and recalling that $\iint_{Q_T^\eps}\xi=1$, we get
\begin{align*}
-\tilde v^\eps_t(t,x)+\tilde H(x,\nabla\tilde v^\eps(t,x))&\le \iint_{Q_T^\eps}\bar\alpha^\eps(s,y)\xi(s,y) dsdy+\omega(\eps)= \tilde\alpha^\eps(t,x)\,,
\end{align*}
which proves \eqref{eq:tildeveps}. Since $\mathrm{dist}(Q_T,\partial_PQ_T^\eps))=\min\{2\eps,b_1-a_1,\dots,b_n-a_n\}>\eps$, the inequality is also satisfied in $Q_T$ for $\eps$ small enough. Moreover, since the convolution preserves the Sobolev and Lebesgue estimates in the interior of the domain, we have $(\tilde v^\eps,\tilde\alpha^\eps,\nabla\tilde v^\eps)\to(v,\alpha,\nabla v)$ in $L^1([0,T];W^{1,1}(\Omega))\times L^{q'}(Q_T)\times L^r(Q_T)$.

To conclude, we adjust $\tilde v^\eps$ and $\tilde\alpha^\eps$ to satisfy the terminal condition of \eqref{eq:fpgen}. Let $\zeta_\eps\in\mathcal C^\infty(\R)$ with $\zeta_\eps(t)=0$ for $t\le T-\eps$, $\zeta_\eps(t)=1$ for $t\ge T$, and $0\le\zeta_\eps'(t)\le C_0\eps^{-1}$ for some $C_0>0$. We define the terminal approximating functions as
\begin{equation*}
\begin{split}
v^\eps(t,x)=(1-\zeta_\eps(t))\tilde v^\eps(t,x)+\zeta_\eps(t)(\psi(x)+\lambda(T-t)) ,
\end{split}
\end{equation*}
and, for a possibly different constant $C_1>0$,
\begin{equation*}
\alpha^\eps(t,x)=\left\{
\begin{array}{ll}
\tilde\alpha^\eps(t,x) &\text{if }0\le t\le T-\eps ,\\
\tilde\alpha^\eps(t,x)+C_1 &\text{if }T-\eps<t\le T .
\end{array}
\right.
\end{equation*}
Then we have $v^\eps(T,x)=\psi(x)$ and $(v^\eps,\alpha^\eps,\nabla v^\eps)\to (v,\alpha,\nabla v)$ in $L^1([0,T];W^{1,1}(Q_T))\times L^{q'}(Q_T)\times L^r(Q_T)$. We just have to prove that ${-v^\eps_t+\tilde H(x,\nabla v^\eps)}\le\alpha^\eps$. Since $(v^\eps,\alpha^\eps)=(\tilde v^\eps,\tilde\alpha^\eps)$ for $t\le T-\eps$, we consider the case $T-\eps<t\le T$. 

Observe that, for $T-\eps\le t\le T$,
$$
\tilde v^\eps(t,x)=\iint_{Q_T^\eps}\big(\psi(y)+\lambda(T-2\eps-s)\big)\eta^\eps(t-s,x-y) dsdy.
$$
Hence, because of the regularity of $\psi$, $\tilde v^\eps$  is a $\mathcal C^1$ function uniformly in $\eps$. Therefore, we have
\begin{align}
|\tilde v^\eps(t,x)-\psi(x)|&=\left|\iint_{Q_T^\eps}(\bar v^\eps(s,y)-\psi(x))\eta^\eps(t-s,x-y) dsdy\right|\label{eq:stima}\\
&\le\left|\iint_{Q_T^\eps}(\psi(y)-\psi(x))\eta^\eps(t-s,x-y) dsdy\right|+4|\lambda|\eps\le\left(\norminf{\nabla\psi}+4|\lambda|\right)\eps .\notag
\end{align}
Using these estimates, we get
\begin{align*}
&-v^\eps_t+{\tilde H(x,\nabla v^\eps)}\\
=&-(1-\zeta_\eps)\tilde v^\eps_t+{\tilde H\big(x,(1-\zeta_\eps)\nabla\tilde v^\eps+\zeta_\eps\nabla\psi\big)}+\zeta'_\eps(\tilde v^\eps-\psi-\lambda(T-t))+\lambda\zeta_\eps\\
\le&\,(1-\zeta_\eps)\tilde\alpha^\eps-(1-\zeta^\eps){\tilde H(x,\nabla\tilde v^\eps)}+{\tilde H\big(x,(1-\zeta_\eps)\nabla\tilde v^\eps+\zeta_\eps\nabla\psi\big)}+\zeta'_\eps(\tilde v^\eps-\psi-\lambda(T-t))\\
\le&\,\tilde\alpha^\eps+\zeta_\eps {\tilde H(x,\nabla\psi)}+|\zeta'_\eps||\tilde v^\eps-\psi| ,
\end{align*}
where in the last inequality, we used the convexity of $H$ {in the last variable}. If $t\le T-\eps$ the right-hand side is exactly $\tilde\alpha^\eps=\alpha^\eps$. If $T>T-\eps$ we use \eqref{eq:stima} and the bound of $\zeta'_\eps$ to get
\begin{align*}
\tilde\alpha^\eps+\zeta_\eps {\tilde H(x,\nabla\psi)}+\zeta'_\eps|\tilde v^\eps-\psi|
\le\tilde\alpha^\eps+\norminf{{\tilde H(x,\nabla\psi)}}+C_0(\norminf{\nabla\psi}+4|\lambda|) .
\end{align*}
Defining $C_1:=\norminf{{\tilde H(x,\nabla\psi)}}+C_0(\norminf{\nabla\psi}+4|\lambda|)$, we get $-v^\eps_t+{\tilde H(x,\nabla v^\eps)}\le\alpha^\eps$. {Hence, for $x\in\Omega$, $-v^\eps_t+H(x,\nabla v^\eps)\le\alpha^\eps$}. This concludes the proof.
\end{proof}

The second lemma 
needed for the  proof of Theorem \ref{thm:duality}
provides a partial H\"older estimate for functions in $\mathscr K$.

\begin{lem}\label{lem:holder}
Consider the setting of Problem \ref{var3}.
Suppose Assumptions \ref{hp} and \ref{hp2} hold, and let $a,b,\delta\in\R$, $a<b$, $\delta>0$, $V_1\subset\R^N$, and take $(v,\alpha)\in\mathscr K([a,b]\times V_1)$. Then, for $a\le s<t\le b$ and for any $V_0\subset V_1$ with $\mathrm{dist}(V_0,\partial V_1)\ge\delta$, we have
\begin{equation}\label{eq:holder}
v(s,x)\le v(t,x)+C(t-s)^\nu(\|\alpha\|_{L^{q'}}+1)\qquad\text{for }x\in V_0 ,
\end{equation}
where $v(s,\cdot)$ and $v(t,\cdot)$ denote the trace of $v$ in $(s,t)\times V_0$. Here, $C$ depends on $\delta$ and on the constants in Assumption \ref{hp}, and on
$$
\nu:=\frac{r-N(q-1)}{N(q-1)(r-1)+rq}>0 .
$$
\end{lem}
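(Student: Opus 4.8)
The plan is to exploit that any $(v,\alpha)\in\mathscr K$ is a distributional subsolution of $-v_t+H(\nabla v)\le\alpha$ and to estimate $v$ by comparison along suitably chosen curves. Since only the interior region $V_0$ matters, I first regularize by \emph{interior convolution}: with the mollifier $\eta^\eps$ from \eqref{mollifier}, set $(v^\eps,\alpha^\eps)=(v*\eta^\eps,\alpha*\eta^\eps)$. By convexity of $H$ and Jensen's inequality (as in Step~3 of the proof of Lemma \ref{lem:regularization}), $-v^\eps_t+H(\nabla v^\eps)\le\alpha^\eps$ holds \emph{classically} at every point of $[a,b]\times V_1$ at distance $>\eps$ from the parabolic boundary, while $(v^\eps,\alpha^\eps,\nabla v^\eps)\to(v,\alpha,\nabla v)$ in $L^1\times L^{q'}\times L^r$ on $[a,b]\times V_0$. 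Hence it suffices to prove \eqref{eq:holder} for a classical subsolution with a constant independent of $\eps$ and then let $\eps\to0$, using stability of the traces under this convergence (the cases $s=a$ or $t=b$ being recovered by a further approximation in time).

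For a classical subsolution and any absolutely continuous curve $\gamma:[s,t]\to V_1$ with $\dot\gamma\in L^{r'}$, the Fenchel inequality $\langle\nabla v^\eps,\dot\gamma\rangle\ge-H(\nabla v^\eps)-H^*(-\dot\gamma)=-H(\nabla v^\eps)-L(\dot\gamma)$, combined with the subsolution property, gives $\tfrac{d}{d\tau}v^\eps(\tau,\gamma(\tau))\ge-L(\dot\gamma(\tau))-\alpha^\eps(\tau,\gamma(\tau))$ a.e., so after integration
\[
v^\eps(s,\gamma(s))\le v^\eps(t,\gamma(t))+\int_s^t\big[L(\dot\gamma(\tau))+\alpha^\eps(\tau,\gamma(\tau))\big]\,d\tau.
\]
Fix $x\in V_0$ and $0<\rho<\delta$, and apply this to the \emph{lens-shaped} family $\gamma_z(\tau)=x+g\big(\tfrac{\tau-s}{t-s}\big)(z-x)$, $z\in B_\rho(x)$, where $g:[0,1]\to[0,1]$ is $C^1$ on $(0,1)$ with $g(0)=g(1)=0$, $\max g=1$, $g(\theta)\asymp\theta^{\beta}$ near $\theta=0$ and $g(\theta)\asymp(1-\theta)^{\beta'}$ near $\theta=1$, for exponents $\beta,\beta'\in(1/r,\,1/(N(q-1)))$. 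This interval is nonempty \emph{exactly because} $r>N(q-1)$ (Assumption \ref{hp2}, Item~\ref{item7}). Since $\gamma_z(s)=\gamma_z(t)=x$ and $\gamma_z([s,t])\subset\overline{B_\rho(x)}\subset V_1$, averaging over $z\in B_\rho(x)$ — and using that $v^\eps(s,x)$, $v^\eps(t,x)$ do not depend on $z$ — yields
\[
v^\eps(s,x)\le v^\eps(t,x)+\fint_{B_\rho(x)}\!\int_s^t L(\dot\gamma_z(\tau))\,d\tau\,dz+\fint_{B_\rho(x)}\!\int_s^t\alpha^\eps(\tau,\gamma_z(\tau))\,d\tau\,dz\;=:\;v^\eps(t,x)+I_L+I_\alpha.
\]

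The term $I_L$ is handled by the growth $L(p)\le C(|p|^{r'}+1)$ (Remark \ref{rem21}) together with $|\dot\gamma_z(\tau)|\le|g'(\tfrac{\tau-s}{t-s})|\,\rho/(t-s)$ and the finiteness of $\int_0^1|g'(\theta)|^{r'}\,d\theta$ (which holds precisely because $\beta,\beta'>1/r$); this gives $I_L\le C\big(\rho^{r'}(t-s)^{1-r'}+(t-s)\big)$. For $I_\alpha$, writing $\theta=(\tau-s)/(t-s)$, the map $z\mapsto\gamma_z(\tau)$ dilates $B_\rho(x)$ onto $B_{\rho g(\theta)}(x)$ with Jacobian $g(\theta)^N$, so Hölder's inequality in space and then in time (with exponents $q$ and $q'$, noting $Nq/q'=N(q-1)$) gives
\[
I_\alpha\le C\,|B_\rho|^{-1/q'}\Big(\int_s^t g(\theta)^{-N(q-1)}\,d\tau\Big)^{1/q}\|\alpha^\eps\|_{L^{q'}}\le C\,\rho^{-N/q'}(t-s)^{1/q}\,\|\alpha^\eps\|_{L^{q'}},
\]
the $\theta$-integral being finite precisely because $\beta,\beta'<1/(N(q-1))$. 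Finally, optimizing over $\rho$: the balance $\rho^{r'}(t-s)^{1-r'}=\rho^{-N/q'}(t-s)^{1/q}$ forces $\rho=(t-s)^{(q+r-1)/(rq+N(q-1)(r-1))}$, for which a short computation shows that both terms equal $C(t-s)^{\nu}$ with $\nu$ exactly the exponent in the statement, while the leftover $C(t-s)$ is of lower order since $\nu<1$. This proves \eqref{eq:holder} for $(v^\eps,\alpha^\eps)$ with $C$ independent of $\eps$; letting $\eps\to0$ concludes.

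The main obstacle is the construction of the curve family: one must choose $g$ (equivalently the pair $\beta,\beta'$) so that simultaneously $\int_0^1|g'|^{r'}\,d\theta<\infty$ — needed to control $I_L$ — and $\int_0^1 g^{-N(q-1)}\,d\theta<\infty$ — needed for $I_\alpha$; these two requirements can be met at the same time \emph{if and only if} $r>N(q-1)$, which is exactly where Assumption \ref{hp2} enters, and the associated Hölder/change-of-variables bookkeeping for $I_\alpha$ with the degenerate Jacobian $g(\theta)^N$ is the most delicate part of the argument. A secondary technical point is making the passage $\eps\to0$ rigorous at the level of $BV$-traces and handling the time endpoints.
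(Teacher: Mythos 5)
Your proposal is correct and follows essentially the same route as the paper's sketch (itself a modification of Lemma 3.3 in \cite{Card1order}): regularize by interior convolution, integrate the Fenchel inequality along a lens-shaped family of curves emanating from and returning to $x$, estimate the Lagrangian and $\alpha$ contributions via the growth bounds and H\"older, and balance the two powers of $(t-s)$. The only cosmetic difference is parametrization: you fix the shape exponents $\beta,\beta'$ and optimize the spatial radius $\rho$, whereas the paper fixes the radius at $\delta$ and optimizes a single exponent $\beta$; these are linked by $\rho\sim\delta(t-s)^{\beta}$, so the resulting exponent $\nu$ is identical.
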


\begin{proof}
We sketch the proof,  slightly modifying \cite[Lemma 3.3 ]{Card1order}. Assume $v,\alpha\in\mathcal C^1$. This hypothesis is removed at the end.

Because of Item \ref{item7} of Assumption \ref{hp2}, we can take $\beta\in(r^{-1},(N(q-1))^{-1})$. We also choose $x\in V_0$ and $\sigma\in\R^N$, with $|\sigma|<\delta$, and we define
\begin{equation*}
x_\sigma(\tau)=\left\{\begin{array}{lr}
x+\sigma(\tau-s)^\beta &\text{if }s\le\tau\le\frac{s+t}2 ,\\
x+\sigma(t-\tau)^\beta &\text{if }\frac{s+t}2\le\tau\le t .
\end{array}
\right.
\end{equation*}
Given the assumptions in the Lemma statement,  $x_\sigma(\tau)\in V_1$ for all $\tau\in[s,t]$. Moreover, using \eqref{eq:fpgen} and the definition of Legendre transform \eqref{legtransH},
we have
$$
\frac d{d\tau}\left[v(\tau,x_\sigma(\tau))-\int_{\tau}^t {L(x_\sigma(l),x_\sigma'(l))} dl\right]\ge-\alpha(\tau,x_\sigma(\tau)) .
$$
Computing the integral for $(\tau,\sigma)\in[s,t]\times B_\delta$, we find
$$
v(s,x)\le v(t,x)+\frac 1{|B_\delta|}\int_s^t\int_{B_\delta}\big[{L(x_\sigma(\tau),x'_\sigma(\tau))}+\alpha(\tau,x_\sigma(\tau))\big] d\sigma d\tau .
$$
Considering the assumptions on $L$ and integrating, we estimate the Lagrangian term
$$
\int_s^t\int_{B_\delta}{L(x_\sigma(\tau),x'_\sigma(\tau))}\, d\sigma d\tau\le C_{\delta}(t-s)^{1+r'(\beta-1)} .
$$
Thus, using H\"older's inequality and a suitable change of variable, we can estimate the last term on the right-hand side as
$$
\int_s^t\int_{B_1}\alpha(\tau,x_\sigma(\tau)) d\sigma d\tau\le C_\delta(t-s)^{\frac{1-n\beta(q-1)}q}\|\alpha\|_{L^{q'}}.
$$
Finally, by choosing $\beta=\frac{q+r-1}{n(q-1)(r-1)+qr}$, we obtain \eqref{eq:holder} in the regular case. We obtain the result in the general case by approximation with $\mathcal C^\infty$ functions, as in the previous Lemma.
\end{proof}

We proceed with the proof of Theorem \ref{thm:duality}.

\begin{proof}[Proof of Theorem \ref{thm:duality}]
If $v\in\mathscr I$, then 
$$
(v,-v_t+{H(x,\nabla v)})\in\mathscr K ,\qquad \text{and}\quad\mathcal I(v)=\mathcal K(v,-v_t+{H(x,\nabla v)}) .
$$
This implies the inequality
$$
\inf\limits_{(v,\alpha)\in\mathscr K}\mathcal K(v,\alpha)\le\inf\limits_{v\in\mathscr I}\mathcal I(v) .
$$
To prove the opposite inequality,
we take $(v,\alpha)\in\mathscr K$. 
Since $F^*(t,x,a)=0$ for $a\le 0$, we have $F^*(t,x,\alpha)=F^*(t,x,\alpha^+)$, where $\alpha^+=\max\{\alpha,0\}$ denotes the positive part of $\alpha$. Then $\mathcal K(v,\alpha)= \mathcal K\big(v,\alpha^+\big)$ and we can replace $\alpha$ with $\alpha^+$. Thus, we can assume $\alpha(t,x)\ge 0$ for all $(t,x)\in Q_T$.
For $\eps>0$, consider $(v^\eps,\alpha^\eps)\in\mathscr I\times\mathcal C^\infty(\Omega)$ defined as in Lemma \ref{lem:regularization}. Then, $(v^\eps,\alpha^\eps)\to (v,\alpha)$ in $L^1([0,T];W^{1,1}(\Omega))\times L^{q'}(\Omega)$, $\alpha^\eps\ge0$ and we have
\begin{equation}\label{eq:reverse}
\inf\limits_{z\in\mathscr I} \mathcal I(z)\le \intif F^*(t,x,-v^\eps_t+{H(x,\nabla v^\eps))} dxdt-\into v^\eps(0,x) dm_0(x)-\int_0^T\int_{\partial\Omega}jv^\eps dx dt .
\end{equation}
Since $F^*$ is non-decreasing in the last variable, we have
\begin{equation}\label{eq:primaF}
\begin{split}
\limsup\limits_{\eps\to0}\intif F^*(t,x,-v^\eps_t+{H(x,\nabla v^\eps))} dxdt&\le\limsup\limits_{\eps\to0}\intif F^*(t,x,\alpha^\eps) dxdt\\&\le \int_0^T\int_\Omega F^*(t,x,\alpha) dxdt ,
\end{split}
\end{equation}
where the last inequality follows from the upper bound on $F^*$ given by \eqref{hp:FFstar} and the Fatou's Lemma applied to the sequence $\{-F^*(t,x,\alpha^\eps)\}_\eps$.

For the second integral, we apply Lemma \ref{lem:holder}. Define $Q_T^\eps$ and $\varPhi_{\bo j}$ as in \eqref{QTeps} and \eqref{Phij}, and the mollifier $\eta^\eps$ as in \eqref{mollifier}. From the definition of $v^\eps$, we have, for a constant $C$ depending on $\|\alpha\|_{L^{p'}}$,
\begin{align*}
v^\eps(0,x)&=\iint_{Q_T^\eps} v\big(t+2\eps,\varPhi_{\bo j}(y)\big)\eta^\eps(-s,x-y) dyds\\
&\ge\iint_{Q_T^\eps}v(0,\varPhi_{\bo j}(y))\eta^\eps(-s,x-y) dyds-C\eps\to v(0,x) ,
\end{align*}
where the convergence holds in $L^1$ since $v\in BV$ ensures $v(0,\cdot)\in L^1(\Omega)$. Hence,
\begin{equation}\label{liminf}
\limsup\limits_{\eps\to0}\left(-\into v^\eps(0,x) m_0(x) dx\right)\le -\into v(0,x) m_0(x) dx .
\end{equation}
For the last integral, we note that
$$
v^\eps-v\in L^1([0,T];W^{1,1}(\Omega))\implies v^\eps(t,\cdot)-v(t,\cdot)\in W^{1,1}(\Omega)\text{ for a.e. }t\in[0,T] .
$$
Hence, for a.e. $t$ the trace of $v^\eps(t,\cdot)-v(t,\cdot)$ is well defined at $\partial\Omega$, and we have
$$
\int_{\partial\Omega}|v^\eps(t,x)-v(t,x)| dx\le C\|v^\eps(t,\cdot)-v(t,\cdot)\|_{W^{1,1}(\Omega)} .
$$
Integrating in time, we find
$$
\intb|v^\eps-v| dxdt\le C\|v^\eps-v\|_{L^1(W^{1,1})}\to0 ,
$$
which ensures
\begin{equation}\label{eq:convJ}
\intb jv^\eps dxdt\to\intb jv dxdt .
\end{equation}
Using \eqref{eq:primaF}, \eqref{liminf} and \eqref{eq:convJ} in \eqref{eq:reverse}, we find
$$
\inf\limits_{z\in\mathscr I}\mathcal I(z)\le \intif F^*(t,x,\alpha) dxdt-\into v(0,x) dm_0(x)-\int_0^T\int_{\partial\Omega}jv dx dt=\mathcal K(v,\alpha) .
$$
Taking the infimum over $(v,\alpha)\in\mathscr K$, $\alpha\ge0$ we find
$$
\inf\limits_{v\in\mathscr I}\mathcal I(v)\le\inf\limits_{(v,\alpha)\in\mathscr K}\mathcal K(v,\alpha) .
$$
This implies \eqref{FinalDuality} and, together with \eqref{eq:duality}, concludes the Theorem.
\end{proof}

We conclude this section with the following corollary, which plays a role in Section \ref{sec7} to prove the existence of solutions for Problem \ref{problem_mfg}.

\begin{cor}\label{cor:dual}
Consider the setting of Problems \ref{var2} and \ref{var3}.
Let $(m,w)\in\mathscr M$ and $(v,\alpha)\in\mathscr K$, and suppose Assumptions \ref{hp} and \ref{hp2} hold. Then we have
\begin{equation}\label{dual_leq}
\begin{split}
\into v(0) m_0 dx+\intb jv dxdt\le\into m(T)\psi dx+\intif \left[m{L\left(x,\frac wm\right)}+ m\alpha\right]dxdt .
\end{split}
\end{equation}
\end{cor}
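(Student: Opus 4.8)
The plan is to establish \eqref{dual_leq} first for a \emph{smooth} subsolution of the Hamilton--Jacobi inequality and then to pass to the limit along the approximating sequence furnished by Lemma \ref{lem:regularization}. We may assume that the right-hand side of \eqref{dual_leq} is finite --- in particular that $\mathcal M(m,w)<+\infty$ --- since otherwise there is nothing to prove; by Proposition \ref{prop_regM} and Remark \ref{remark35} this guarantees $m\in L^q(Q_T)$, $w\in L^{\beta}(Q_T;\R^N)$ with $\beta>1$, and that $m(T)$ is a well-defined element of $(\mathcal C^1(\Omega))'$, so that all integrals below make sense. For $\eps>0$ I would take $(v^\eps,\alpha^\eps)\in\mathscr I\times\mathcal C^\infty(Q_T)$ as in Lemma \ref{lem:regularization}: thus $-v^\eps_t+H(\nabla v^\eps)\le\alpha^\eps$ holds pointwise, $v^\eps(T,\cdot)=\psi$, and $(v^\eps,\alpha^\eps,\nabla v^\eps)\to(v,\alpha,\nabla v)$ in $L^1(Q_T)\times L^{q'}(Q_T)\times L^r(Q_T)$.

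Next I would use $v^\eps$, which belongs to $W^{1,\infty}(Q_T)$, as a test function for the Fokker--Planck equation: formula \eqref{generalformula} with $t=T$ and $\xi=v^\eps$ (legitimate since $(m,w)\in L^\theta$ with $\theta=\beta>1$) gives the identity
\[
\into v^\eps(0)m_0\,dx+\intb jv^\eps\,dxdt=\into\psi\,m(T,dx)-\intif\big(mv^\eps_t+w\cdot\nabla v^\eps\big)\,dxdt .
\]
To bound the last integral from above, the key pointwise computation is: multiplying the classical subsolution inequality by $m\ge0$ gives $-mv^\eps_t\le m\alpha^\eps-mH(\nabla v^\eps)$, while the Fenchel--Young inequality $\langle\nabla v^\eps,-w/m\rangle\le H(\nabla v^\eps)+H^*(-w/m)$, multiplied by $m\ge0$ and combined with $L(\cdot)=H^*(-\cdot)$, gives $-w\cdot\nabla v^\eps\le mH(\nabla v^\eps)+mL(w/m)$ (the convention for $m=0$ making this trivial). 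Adding these two estimates, the terms $\pm mH(\nabla v^\eps)$ cancel, leaving $-mv^\eps_t-w\cdot\nabla v^\eps\le m\alpha^\eps+mL(w/m)$ a.e.\ on $Q_T$; integrating over $Q_T$ and inserting into the identity yields the $\eps$-level inequality
\[
\into v^\eps(0)m_0\,dx+\intb jv^\eps\,dxdt\le\into\psi\,m(T,dx)+\intif\Big[mL\Big(\frac wm\Big)+m\alpha^\eps\Big]\,dxdt .
\]

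Finally I would let $\eps\to0$. On the right, $\into\psi\,m(T,dx)$ and $\intif mL(w/m)\,dxdt$ are independent of $\eps$ (the latter is finite by the bounds on $H^*$ in Remark \ref{rem21} together with $\iint_{\{m>0\}}|w|^{r'}m^{1-r'}\,dxdt<+\infty$, obtained as in the proof of Proposition \ref{prop_regM}), and $\intif m\alpha^\eps\,dxdt\to\intif m\alpha\,dxdt$ by H\"older's inequality since $m\in L^q$ and $\alpha^\eps\to\alpha$ in $L^{q'}$. On the left, the boundary term converges, $\intb jv^\eps\,dxdt\to\intb jv\,dxdt$, because $j$ is bounded and the traces converge, $\intb|v^\eps-v|\,dxdt\le C\|v^\eps-v\|_{L^1([0,T];W^{1,1}(\Omega))}\to0$, exactly as in the proof of Theorem \ref{thm:duality}. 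The delicate term --- and the main obstacle --- is $\into v^\eps(0)m_0\,dx$, since only $L^1$ convergence of $v^\eps(0,\cdot)$ is at hand; here I would invoke the one-sided H\"older estimate of Lemma \ref{lem:holder}, applied on the enlarged domain $Q_T^\eps$ to the reflected mollification defining $v^\eps$, to obtain a lower bound $v^\eps(0,\cdot)\ge g^\eps-C\eps^{\nu}$ with $g^\eps$ a mollification of the trace $v(0,\cdot)\in L^1(\Omega)$ and hence $g^\eps\to v(0,\cdot)$; since $m_0\ge0$, this gives $\liminf_{\eps\to0}\into v^\eps(0)m_0\,dx\ge\into v(0)m_0\,dx$, precisely as in \eqref{liminf}. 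Passing to the $\liminf$ on the left and to the limit on the right in the $\eps$-level inequality then yields \eqref{dual_leq}. Everything except this last initial-trace convergence is routine once the duality framework of Sections \ref{sec3}--\ref{sec5} is in place, and it is exactly at this point that the rectangular-domain hypothesis (through Lemmas \ref{lem:regularization} and \ref{lem:holder}) is used.
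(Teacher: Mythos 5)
Your argument is correct and follows the paper's own proof essentially line for line: assume $\mathcal M(m,w)<\infty$, regularize $(v,\alpha)$ via Lemma \ref{lem:regularization}, test the Fokker--Planck equation with $v^\eps$ through \eqref{generalformula}, apply the pointwise subsolution inequality together with Fenchel--Young for $L=H^*(-\cdot)$, and then pass to the limit using the H\"older-type lower bound (via Lemma \ref{lem:holder} / \eqref{liminf}) on the initial trace and the trace convergence for the boundary integral. The only cosmetic difference is that you consolidate both pointwise estimates at the $\eps$-level before integrating, whereas the paper substitutes the Hamilton--Jacobi inequality first and invokes Fenchel--Young during the $\liminf$ step; the substance and all key ingredients are identical.
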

\begin{proof}
Suppose the right-hand side is bounded, otherwise the inequality is obvious. Arguing as in Proposition \ref{prop_regM}, we have $w\in L^\beta$, with $\beta>1$.

Thanks to Lemma \ref{lem:regularization}, we take $(v^\eps,\alpha^\eps)\in\mathscr I\times\mathcal C^\infty$ such that $(v^\eps,\alpha^\eps)\to(v,\alpha)$ in $L^1([0,T];W^{1,1}(Q_T))\times L^{q'}(Q_T)$. Using $v^\eps$ as a test function for the pair $(m,w)$, we can apply \eqref{generalformula} since $(m,w)\in L^\theta(Q_T;\R^{N+1})$, with $\theta=\frac{qr}{qr-q+1}>1$. We get, for all $\eps>0$,
\begin{equation*}
\begin{split}
\into m(0)v^\eps(0) dx+\intif \big(mv^\eps_t+w\cdot\nabla v^\eps\big)dxdt+\intb jv^\eps dxdt=\into m(T)\psi dx .
\end{split}
\end{equation*}
Observe that the integral $\into m(T)\psi dx$ makes sense thanks to Proposition \ref{pro33}. Since $(v^\eps,\alpha^\eps)$ is a classical solution of \eqref{eq:fpgen}, substituting we find
\begin{equation*}
\begin{split}
\into m(0)v^\eps(0) dx+\intif \big(m{H(x,\nabla v^\eps)}+w\cdot\nabla v^\eps\big)dxdt&+\intb jv^\eps dxdt\\
&\le\into m(T)\psi dx+\intif m\alpha^\eps dxdt .
\end{split}
\end{equation*}
From the definition of $L$ and from \eqref{liminf}, we have
\begin{equation}\label{eqL}
\begin{split}
\liminf_{\eps\to0}\Bigg[\intif \big(w\cdot\nabla v^\eps&+m{H(x,\nabla v^\eps)}\big) dxdt+\into v^\eps(0) m_0(x) dx\Bigg]\\
\ge &-\intif m{L\left(x,\frac wm\right)}dxdt+\into v(0) m_0(x) dx .
\end{split}
\end{equation}
Hence
\begin{equation*}
\begin{split}
\into m(0)v(0) dx+\intif jv^\eps dxdt\le \into m(T)\psi dx+\intif\left[m{L\left(x,\frac wm\right)}+m\alpha^\eps\right] dxdt .
\end{split}
\end{equation*}
Since $v^\eps\to v$ in $L^1(W^{1,1})$ and $\alpha^\eps\to\alpha$ in $L^{q'}$, $m\in L^q$, we can pass to the limit in the remaining two integrals and obtain \eqref{dual_leq}.
\end{proof}

\section{Existence of a minimizer for the relaxed Problem}\label{sec6}

The following proposition establishes the existence of minimizers for Problem \ref{var3}. Since Problem \ref{var3} is a relaxation of Problem \ref{var1}, we obtain a relaxed solution for the latter as a consequence. We prove this result under Assumptions \ref{hp}, \ref{hp2}, and \ref{hp3}.
Assumption \ref{hp2} is only required in the first step of Theorem \ref{thm:duality} for technical reasons. 
Therefore, if Theorem \ref{thm:duality} can be proved without relying on Assumption \ref{hp2}, then neither the specific choice of $\Omega$ nor the evenness of the Hamiltonian $H$ is required. This is because all other results only assume that $\Omega$ is a general Lipschitz domain.

\begin{pro}\label{pro:ex_relax}
	Consider the setting of Problem \ref{var3}.
	Suppose Assumptions \ref{hp}, \ref{hp2}, and \ref{hp3} hold. Then there exists $(\bar v,\bar\alpha)\in\mathscr K$ such that
	$$
	\inf\limits_{(v,\alpha)\in\mathscr K}\mathcal K(v,\alpha)=\min\limits_{(v,\alpha)\in\mathscr K}\mathcal K(v,\alpha)=\mathcal K(\bar v,\bar\alpha)\,.
	$$
\end{pro}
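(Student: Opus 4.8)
\emph{Setup.} The plan is to run the direct method of the calculus of variations for $\mathcal K$ over $\mathscr K$. By Theorem~\ref{thm:duality} combined with Theorem~\ref{duality}, the value $\inf_{(v,\alpha)\in\mathscr K}\mathcal K(v,\alpha)=-\min_{(m,w)\in\mathscr M}\mathcal M(m,w)$ is finite, so I would start from a minimizing sequence $(v^n,\alpha^n)\in\mathscr K$ with $\mathcal K(v^n,\alpha^n)\le C$ and, exactly as in the proof of Theorem~\ref{thm:duality}, replace $\alpha^n$ by its positive part so that $\alpha^n\ge 0$. The aim is then to extract a limit $(\bar v,\bar\alpha)$, to check that $(\bar v,\bar\alpha)\in\mathscr K$, and that $\mathcal K(\bar v,\bar\alpha)\le\liminf_n\mathcal K(v^n,\alpha^n)$.

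\emph{A priori estimates.} The key first step is a uniform upper bound on $v^n$. After extending $(v^n,\alpha^n)$ by the spatial reflection construction of Lemma~\ref{lem:regularization} to a larger rectangular domain — this is where Items~\ref{item5} and~\ref{item6} of Assumption~\ref{hp2} enter, ensuring the reflected pair still satisfies \eqref{eq:fpgen} — Lemma~\ref{lem:holder} applies with $V_0$ a neighbourhood of $\overline\Omega$ and yields
\[
v^n(s,x)\le v^n(t,x)+C(t-s)^\nu\big(\|\alpha^n\|_{L^{q'}}+1\big),\qquad 0\le s<t\le T,\ x\in\Omega.
\]
Taking $t=T$ and using the terminal constraint $v^n(T,\cdot)\le\psi$ gives the pointwise bound $v^n\le C(1+\|\alpha^n\|_{L^{q'}})$ on $Q_T$. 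Plugging this into $\mathcal K(v^n,\alpha^n)\le C$, using $F^*(t,x,s)\ge C^{-1}s^{q'}-C$ from \eqref{hp:FFstar} and $m_0,j\ge 0$, produces $C^{-1}\|\alpha^n\|_{L^{q'}}^{q'}\le C\big(1+\|\alpha^n\|_{L^{q'}}\big)$; since $q'>1$ this forces $\|\alpha^n\|_{L^{q'}}\le C$, and hence $v^n\le C$ uniformly. Next, from $F^*\ge 0$ and $\mathcal K(v^n,\alpha^n)\le C$ one gets $\into v^n(0)m_0\,dx+\intb jv^n\,dxdt\ge -C$, and together with the uniform bound $v^n\le C$ and the strict positivity $m_0>0$, $j\ge\varepsilon_0>0$ from Assumption~\ref{hp3} this yields $L^1$ bounds on $v^n(0,\cdot)$ (weighted by $m_0$) and on $v^n|_{[0,T]\times\partial\Omega}$. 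Testing \eqref{eq:distribution} against a fixed $\phi\in\mathcal C_c^\infty((0,T]\times\overline\Omega)$, $\phi\ge 0$, and using $H(p)\ge C^{-1}|p|^r-C$ bounds $\nabla v^n$ in $L^r(Q_T)$; together with the boundary-trace bound and a Poincaré inequality this gives $\|v^n\|_{L^1(Q_T)}\le C$. Finally, \eqref{eq:fpgen} shows that the negative part of the measure $v^n_t$ is dominated by $\alpha^n+C\in L^1$, while its total signed mass equals $\into(v^n(T)-v^n(0))\,dx$, which is controlled by the trace bounds; hence $(v^n)$ is bounded in $BV(Q_T)$.

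\emph{Passage to the limit.} Up to a subsequence I then have $\alpha^n\rightharpoonup\bar\alpha$ weakly in $L^{q'}(Q_T)$, $v^n\to\bar v$ strongly in $L^1(Q_T)$ and a.e.\ (compact embedding of $BV(Q_T)$ into $L^1(Q_T)$), $\nabla v^n\rightharpoonup\nabla\bar v$ weakly in $L^r(Q_T)$, and $v^n_t\rightharpoonup\bar v_t$ weakly-$*$ as measures; in particular $(\bar v,\bar\alpha)\in BV(Q_T)\times L^{q'}(Q_T)$ with $\nabla\bar v\in L^r$. To see $(\bar v,\bar\alpha)\in\mathscr K$ I pass to the limit in \eqref{eq:distribution}: the terms linear in $v$ and $\alpha$ converge, while convexity of $H$ and $\phi\ge 0$ give $\liminf_n\intif H(\nabla v^n)\phi\,dxdt\ge\intif H(\nabla\bar v)\phi\,dxdt$; the terminal inequality $\bar v(T,\cdot)\le\psi$ then follows by passing to the limit in the Hölder estimate above together with $v^n(T,\cdot)\le\psi$. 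For lower semicontinuity of $\mathcal K$, convexity of $F^*$ in its last argument makes $\intif F^*(t,x,\alpha)\,dxdt$ weakly l.s.c.\ in $\alpha$; for the two remaining terms I need $\limsup_n\big(\into v^n(0)m_0\,dx+\intb jv^n\,dxdt\big)\le\into\bar v(0)m_0\,dx+\intb j\bar v\,dxdt$. For the initial term, averaging the Hölder estimate gives $v^n(0,x)\le\frac1\delta\int_0^\delta v^n(t,x)\,dt+C\delta^\nu$; letting $n\to\infty$ using $v^n\to\bar v$ in $L^1$ and then $\delta\to 0$ yields the claim, and for the boundary term one argues in the same way after spatial reflection, transferring a one-sided control to the traces on $[0,T]\times\partial\Omega$.

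\emph{The main obstacle.} The hard part is precisely this last point: the upper semicontinuity of the trace terms $\into v^n(0)m_0\,dx$ and $\intb jv^n\,dxdt$. Since the trace operator on $BV(Q_T)$ is not compact, strong $L^1$ convergence in the interior does not by itself control the traces, and one must combine it with the one-sided Hölder-in-time estimate of Lemma~\ref{lem:holder} at $\{t=0\}$ and a matching argument at $\partial\Omega$ built on the reflection construction of Lemma~\ref{lem:regularization}. It is here that the strict positivity of $m_0$ and $j$ in Assumption~\ref{hp3} is indispensable — both to keep these terms under control along the minimizing sequence (converting $\mathcal K(v^n,\alpha^n)\le C$ into $L^1$ bounds on the traces) and to make the passage to the limit one-sided in the right direction. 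Everything else is a routine application of the direct method.
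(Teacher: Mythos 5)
Your overall strategy (direct method for $\mathcal K$ over $\mathscr K$, a priori bounds from the duality value plus Lemma~\ref{lem:holder}, extraction, verification of membership in $\mathscr K$, lower semicontinuity) is the same as the paper's. Your route to the a priori bounds is a valid variant: you extract a pointwise one-sided bound $v^n\le C(1+\|\alpha^n\|_{L^{q'}})$ from Lemma~\ref{lem:holder} and feed it back into $\mathcal K(v^n,\alpha^n)\le C$ to close the $L^{q'}$ estimate on $\alpha^n$, whereas the paper works through the sets $A_n,B_n$ of positive parts of the traces and uses strict positivity of $m_0,j$ to control the negative parts; both give the same uniform bounds on $\|\alpha^n\|_{L^{q'}}$, on the trace $L^1$-norms, on $\nabla v^n\in L^r$, and on $v^n_t$ as a measure. (The paper starts the minimizing sequence in $\mathscr I$, which lets it apply Lemma~\ref{lem:holder} in the $\mathcal C^1$ case directly; starting in $\mathscr K$, as you do, requires the regularization of Lemma~\ref{lem:regularization} first, but this is legitimate.)

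The genuine gap is in the lower semicontinuity of the lateral boundary term $\intb j\,v^n\,dx\,dt$. Your proposal — ``for the boundary term one argues in the same way after spatial reflection, transferring a one-sided control to the traces on $[0,T]\times\partial\Omega$'' — does not go through as described. Lemma~\ref{lem:holder} gives a one-sided H\"older bound \emph{in time at a fixed spatial point}; after reflection one can apply it at (or near) $\partial\Omega$ and obtain $v^n(s,x)\le v^n(t,x)+C(t-s)^\nu$ for $x$ on the boundary, but that still compares values at the \emph{same} spatial point at two times. It gives no comparison between $v^n$ on $\partial\Omega$ and $v^n$ at nearby interior points, which is what one needs to pass from the interior $L^1$-convergence $v^n\to\bar v$ to convergence of the spatial traces. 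The averaging-in-time argument that works for $\into v^n(0)m_0\,dx$ therefore has no direct analogue at $\partial\Omega$. The paper handles this differently: from the uniform bounds it deduces that $v^n$ is bounded in $L^s([0,T];W^{1,s}(\Omega))$ with $s=\min\{N',r\}>1$, a reflexive space, so $v^n\rightharpoonup\bar v$ weakly there; the map $z\mapsto\intb jz\,dx\,dt$ is a bounded linear functional on $L^s([0,T];W^{1,s}(\Omega))$ via the trace theorem, and hence the boundary integral actually converges (not merely satisfies a one-sided inequality). If you wish to avoid the reflexivity argument, you would instead need to control $\|v^n|_{\partial\Omega}-A_\delta v^n\|_{L^1}$ uniformly in $n$ using the uniform $L^r$ bound on $\nabla v^n$ (via a slab-averaging and H\"older-in-space estimate), but that is not what you sketched, and it is substantially more than ``the same way'' as the time-averaging at $t=0$.
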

\begin{proof}
	By Theorem \ref{thm:duality}, 
	we can select a minimizing sequence $v^n\in\mathscr I$ such that \[\mathcal I(v^n)\to\inf\limits_{(v,\alpha)\in\mathscr K}\mathcal K(v,\alpha).\] Let  $\alpha^n:=\max\{-v_t^n+{H(x,\nabla v^n)},\,f(t,x,0)\}$. By Remark \ref{rem23}, $F^*(t,x,a)=0$ for $a\le f(t,x,0)$. Thus,  we have
	$$
	F^*(t,x,\alpha^n)=F^*(t,x,-v^n_t+{H(x,\nabla v^n)}).
	$$
	Accordingly, 
	$$
    \mathcal I(v^n)=\mathcal K(v^n,\alpha^n). 
	$$
	Consequently, $(v^n,\alpha^n)\in\mathscr K$ is a minimizing sequence of $\mathcal K$. 
	To pass to the limit and obtain the existence of a minimizer, we need uniform bounds on $v^n$ and $\alpha^n$. Accordingly, next, we prove a $W^{1,1}(Q_T)$ bound for $v^n$ and a $L^{q'}(Q_T)$ bound for $\alpha^n$.  
	
	We start by observing that $\mathcal K(v^n,\alpha^n)\le C$ for some $C>0$ independent of $n$, because $(v^n,\alpha^n)$ is a minimizing sequence. This means
	\begin{equation}\label{upper_bound}
		\intif F^*(t,x,\alpha^n)\,dxdt-\into v^n(0)\,dm_0(x)-\intb j(t,x)\,v^n(t,x)\,dxdt\le C\,.
	\end{equation}
	In particular, because $F^*\ge0$, we have
	\begin{equation}\label{camundongo}
		-\into v^n(0)\,dm_0(x)-\intb j(t,x)\,v^n(t,x)\,dxdt\le C\,.
	\end{equation}
	Now, we use \eqref{camundongo} to prove a $L^1(\partial_PQ_T)$ bound for $v^n$ and \eqref{upper_bound} to prove a $L^{q'}$ bound for $\alpha^n$, uniformly in $n\in\N$. We consider the sets
	$$
	A_n=\big\{x\in\Omega\,|\ \,v^n(0,x)\ge0\big\}\,,\qquad B_n=\big\{(t,x)\in [0,T]\times\partial\Omega\,|\,\ v^n(t,x)\ge0\big\}
	$$
	and $A_n^c$, $B_n^c$ as the complements of $A_n$ and $B_n$ with respect to $\Omega$ and $[0,T]\times\partial\Omega$. We get
	\begin{align*}
		\iint_{\partial_PQ_T}|v^n|\,dxdt=\into|\psi|\,&dx+\into|v^n(0)|\,dx+\intb|v^n|\,dxdt\\
		\le C \,+ \int_{A_n}v^n(0)\,&dx-\int_{A_n^c}v^n(0)\,dx+\iint_{B_n}v^n\,dxdt-\iint_{B_n^c}v^n\,dxdt\,.
	\end{align*}
	Here, $C$ denotes a generic positive constant that may vary from line to line.
	
	We analyze each term. Thanks to \eqref{eq:holder} in Lemma \ref{lem:holder}, we have
	\begin{equation}\label{pos_est}
		\int_{A_n}v^n(0)\,dx+\iint_{B_n}v^n\,dxdt\le \iint_{A_n\cup B_n} (\psi(x)+T\|\alpha^n\|_{L^{q'}})\,dxdt\le C\big(1+\|\alpha^n\|_{L^{q'}}\big)\,.
	\end{equation}
	For the other terms, we recall that, thanks to Item~\ref{item8} of Assumption \ref{hp3}, we have $C^{-1}<m_0<C$ and $C^{-1}<j<C$ for some $C>0$. Then, using \eqref{camundongo}, we get
	\begin{align*}
		&-\int_{A_n^c}v^n(0)\,dx-\iint_{B_n^c}v^n\,dxdt\le C\left(-\int_{A_n^c}v^n(0)m_0(x)\,dx-\iint_{B_n^c}j\,v^n\,dxdt\right)\\
		=\, &C\left(-\into v^n(0)m_0(x)\,dx-\intb j\,v^n\,dxdt+\int_{A_n}v^n(0)m_0(x)\,dx+\iint_{B_n}j\,v^n\,dxdt\right)\\
		\le\,&C\left(1+\int_{A_n}v^n(0)\,dx+\iint_{B_n}v^n\,dxdt\right)\le C\big(1+\|\alpha^n\|_{L^{q'}}\big)\,,
	\end{align*}
	where in the last inequality, we used \eqref{pos_est}. This implies
	\begin{equation}\label{boundary}
		\iint_{\partial_PQ_T}|v^n|\,dxdt\le C\big(1+\|\alpha^n\|_{L^{q'}}\big)\,.
	\end{equation}
	Returning to  \eqref{upper_bound}, the estimate \eqref{boundary} implies
	$$
	\intif F^*(t,x,\alpha^n)\,dxdt\le C\big(1+\|\alpha^n\|_{L^{q'}}\big).
	$$
	Using the growth properties of $F^*$ stated in \eqref{hp:FFstar}, we deduce
	$$
	\|\alpha^n\|_{L^{q'}}\le C\,,
	$$
	which implies, again from \eqref{boundary}, $\|v^n\|_{L^1(\partial_PQ_T)}\le C$.
	
	Now, we obtain a bound for $\nabla v^n$ in $L^r$ as follows. Using Item~\ref{item2} of Assumption~\ref{hp}, we get
	\begin{align*}
		\intif|\nabla v^n|^r\,dxdt&\le C+C\intif {H(x,\nabla v^n)}\,dxdt\le C+C\intif (\alpha^n+v^n_t)\,dxdt\\
		&\le C+C\|\alpha^n\|_{L^{q'}}+\into\psi(x)\,dx-\into v^n(0)\,dx\le C\,,
	\end{align*}
	where in the last inequality, we used \eqref{boundary} and the $L^{q'}$ bound of $\alpha^n$.
	
	Finally, we get a bound for $v^n_t$ as follows. First, we note that $|v^n_t|=v^n_t+2(v^n_t)^-$.
	Thus,  we have
	\begin{align*}
		\intif |v^n_t|\,dxdt=&\into(\psi(x)-v^n(0))+2\iint_{\{v^n_t<0\}}(-v^n_t)\,dxdt\\
		\le C\,+2&\iint_{\{v^n_t<0\}}(-v^n_t+{H(x,\nabla v^n)})\,dxdt\le C\big(1+\|\alpha^n\|_{L^{q'}}\big)\le C\,,
	\end{align*}
	where we used the fact that $H\ge0$, $-v^n_t+{H(x,\nabla v^n)}\le\alpha^n$ and the $L^{q'}$ bound of $\alpha^n$. Because
	$v^n_t$ is bounded in $L^1$ and $v^n(T)=\psi$, we deduce that $v^n$ is bounded in $L^1(Q_T)$. Although $v^n$ is actually in $\mathcal C([0,T];L^1(\Omega))$, this stronger  result is not required here. Hence, we have proved
	\[
	\|v^n\|_{W^{1,1}(Q_T)}+\|\alpha^n\|_{L^{q'}(Q_T)}\le C\,,
	\]
	which implies the existence of a pair $(\bar v,\bar\alpha)\in BV\times L^{q'}$ such that $v^n\to \bar v$ in $L^1(Q_T)$, $v^n_t\weak v_t$ in the sense of measures, $\alpha^n\weak\bar\alpha$ in $L^{q'}$, up to a non-relabelled subsequence. Moreover,
	since $\nabla v^n$ is bounded in $L^r$, it follows that, up to a subsequence, $\nabla v^n$ converges weakly to $\nabla\bar v$ in $L^r$.
	
	To prove that $(\bar v,\bar\alpha)\in\mathscr K$, we have to verify that  \eqref{eq:distribution} holds. Since $v^n$ is a (classical) solution of \eqref{eq:fpgen}, for each $\phi\in\mathcal C_c^\infty((0,T]\times\overline\Omega)$, $\phi\ge0$, we have
	$$
	\intif \Big(v^n\phi_t+{H(x,\nabla v^n)}\phi\Big)\,dxdt\le\intif\alpha^n\phi\,dx dt+\into \psi(x)\phi(T,x)\,dx\,.
	$$
	Since $H$ is convex {in the last variable} and continuous by Item~\ref{item2} of Assumption~\ref{hp}, it is lower semicontinuous for the weak convergence, hence
	$$
	\intif {H(x,\nabla\bar v)}\,\phi\,dxdt\le\liminf\limits_{n\to+\infty}\intif {H(x,\nabla v^n)}\,\phi\,dxdt\,.
	$$
	For the remaining terms, we can pass to the limit thanks to the weak convergence of $(v^n,\alpha^n)$, and we obtain \eqref{eq:distribution} for the pair 
	$(\bar v,\bar\alpha)$. Hence, $(\bar v,\bar\alpha)\in\mathscr K$.
	
	To conclude the theorem, we must prove that $(\bar v,\bar\alpha)$ minimizes $\mathcal K$. We start from
	\begin{equation}\label{eq:infconv}
		\lim\limits_{n\to+\infty}\left(\intif F^*(t,x,\alpha^n)\,dxdt-\into v^n(0)\,dm_0(x)-\intb j\,v^n\,dxdt\right)=\inf\limits_{(v,\alpha)\in\mathscr K}\mathcal K(v,\alpha)\,.
	\end{equation}
	Because $F^*$ is convex and continuous, as before, we have
	$$
	\intif F^*(t,x,\bar\alpha)\,dxdt\le\liminf\limits_{n\to+\infty}\intif F^*(t,x,\alpha^n)\,dxdt\,.
	$$
	For the second term, we use \eqref{eq:holder} in Lemma \ref{lem:holder} to get, for all $s\in(0,T]$,
	$$
	-\into v^n(0)\,dm_0(x)\ge-\into v^n(s)\,dm_0(x)-Cs^\nu\,.
	$$
	Integrating both sides of the preceding inequality in $s\in[0,\eps]$ and dividing by $\eps$, we find
	$$
	-\into v^n(0)\,dm_0(x)\ge-\frac1\eps\int_0^\eps\into v^n(s)\,dm_0(x)-C\eps^\nu\,.
	$$
	Taking $\liminf$ in $n$ and using the weak convergence of $v^n$, we get
	\begin{equation}\label{limsup}
		\liminf\limits_{n\to+\infty}\left(-\into v^n(0)\,m_0\,dx\right)\ge-\frac1\eps\int_0^\eps\into \bar v(s)\,m_0\,dx-C\eps^\nu\overset{\eps\to0}{\longrightarrow}-\into \bar v(0)\,m_0\,dx\,,
	\end{equation}
	where the convergence in $\eps$ holds because $\bar v\in BV(Q_T)$.
	
	Now, we analyze the boundary integral $\intb jv^n\,dxdt$.
	Because $v^n$ is bounded in $W^{1,1}(Q_T)$, we have from Sobolev's embedding theorem that $\|v^n\|_{L^{N'}}\le C$, where $N'=1^*=\frac N{N-1}$ if $N>1$ (for $N=1$ the bound holds in $L^p$ for all $p\ge1$). Because $\nabla v^n$ is bounded in $L^r$, we have
	$$
	\|v^n\|_{L^s([0,T];W^{1,s}(\Omega))}\le C\,,\qquad\text{where }s=N'\wedge r\,.
	$$
	Because $L^s(W^{1,s})$ is a reflexive Banach space, there exists $\bar v\in L^s(W^{1,s})$ such that $v^n\weak\bar v$ in $L^s(W^{1,s})$ up to subsequences.
	
	The functional mapping $L^s([0,T];W^{1,s}(\Omega))$ to $\Rr$ given by
	$$
	z\mapsto  \intb z(t,x)\,j(t,x)\,dxdt
	$$
is an element of $\left(L^s(W^{1,s})\right)'$. Thus, 
	$$
	\intb j(t,x)\,v^n(t,x)\,dxdt\to\intb j(t,x)\,\bar v(t,x)\,dxdt\,.
	$$
	Coming back to \eqref{eq:infconv}, we have proved that
	$$
	\mathcal K(\bar v,\bar\alpha)\le\inf\limits_{(v,\alpha)\in\mathscr K}\mathcal K(v,\alpha)\,,
	$$
	which concludes the proof.
\end{proof}

The following corollary provides additional properties of the minimizer necessary for the proof of Theorem \ref{thm:main}.

\begin{cor}\label{cor:strong_grad}
	Consider the setting of Problem \ref{var3}.
	Suppose Assumptions \ref{hp}, \ref{hp2} and \ref{hp3} hold. If $(v,\alpha)\in\mathscr K$ minimizes $\mathcal K$, there exists a minimizing sequence $(v^n,\alpha^n)\in\mathscr I\times\mathcal C^\infty(Q_T)$ such that 
	$$
	(v^n,\alpha^n,\nabla v^n)\to (v,\alpha,\nabla v),\qquad\text{strongly in }L^1([0,T];W^{1,1}(\Omega))\times L^{q'}(Q_T)\times L^r(Q_T)\,.
	$$
	Moreover, we have $v(T)=\psi$ in the sense of traces and $v^n(0)\weak v(0)$ weakly in $L^1(\Omega)$.
\end{cor}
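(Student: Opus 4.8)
The plan is to take the minimizing sequence straight from Lemma~\ref{lem:regularization}, check the minimizing property by re-running the limit computations in the proof of Theorem~\ref{thm:duality}, and then prove the two ``moreover'' assertions; the terminal trace $v(T)=\psi$ is the part that needs real work. Concretely: apply Lemma~\ref{lem:regularization} to the given minimizer $(v,\alpha)\in\mathscr K$ and set $(v^n,\alpha^n):=(v^{1/n},\alpha^{1/n})\in\mathscr I\times\mathcal C^\infty(Q_T)$, which are classical subsolutions of \eqref{eq:fpgen}; that lemma already provides $(v^n,\alpha^n,\nabla v^n)\to(v,\alpha,\nabla v)$ in $L^1([0,T];W^{1,1}(\Omega))\times L^{q'}(Q_T)\times L^r(Q_T)$ (note $\nabla v^n\to\nabla v$ in $L^r\hookrightarrow L^1$), which is exactly the claimed strong convergence.

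To see that $(v^n,\alpha^n)$ is a minimizing sequence: each $(v^n,\alpha^n)\in\mathscr K$ and, by Proposition~\ref{pro:ex_relax}, $\mathcal K$ attains its minimum at $(v,\alpha)$, so $\mathcal K(v^n,\alpha^n)\ge\mathcal K(v,\alpha)=\min_{\mathscr K}\mathcal K$. For the reverse inequality I would reuse the three estimates in the proof of Theorem~\ref{thm:duality}: a generalized dominated convergence argument giving $\intif F^*(t,x,\alpha^n)\,dxdt\to\intif F^*(t,x,\alpha)\,dxdt$ (using $F^*(t,x,\alpha^n)\le C|\alpha^n|^{q'}+C$ from \eqref{hp:FFstar}, the strong $L^{q'}$ convergence of $\alpha^n$, and a.e.\ convergence along a subsequence together with the continuity of $F^*$); the one-sided bound \eqref{liminf} for $-\into v^n(0)\,dm_0$; and \eqref{eq:convJ} for $\intb jv^n\,dxdt$. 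Summing these gives $\limsup_n\mathcal K(v^n,\alpha^n)\le\mathcal K(v,\alpha)$, hence $\mathcal K(v^n,\alpha^n)\to\min_{\mathscr K}\mathcal K$.

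For $v^n(0)\weak v(0)$ in $L^1(\Omega)$ I would use the explicit form of the approximation built in Lemma~\ref{lem:regularization}: for $n$ large, $v^n(0,\cdot)$ is a mollification (composed with the reflections $\varPhi_{\bo j}$) of $v$ centered at a time of order $1/n$; since $v\in BV(Q_T)$, its time slices average to the trace $v(0^+,\cdot)=v(0,\cdot)$ in $L^1(\Omega)$, so in fact $v^n(0)\to v(0)$ strongly in $L^1(\Omega)$, a fortiori weakly. (Alternatively, the computation that leads to \eqref{liminf}, carried out with an arbitrary bounded weight in place of $m_0$, yields the weak-$L^1$ convergence directly.)

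The identity $v(T)=\psi$ is the main obstacle, because $\mathcal K$ has no term in the trace of $v$ at $t=T$ and feels it only through the constraint $v(T)\le\psi$ defining $\mathscr K$. I would argue by comparison with the maximal subsolution carrying the same source. As $F^*(t,x,\cdot)$ is non-decreasing and vanishes below $f(t,x,0)$, one may take $\alpha=\max\{-v_t^{ac}+H(\nabla v),\,f(t,x,0)\}$ without changing $\mathcal K(v,\alpha)$; let $v^\sharp$ be the largest element of $\mathscr K$ with this source, i.e.\ the value function $v^\sharp(t,x)=\inf_\gamma\{\psi(\gamma(T))+\int_t^T(L(\dot\gamma)+\alpha(s,\gamma))\,ds\}$, which satisfies $v^\sharp\ge v$, $v^\sharp(T)=\psi$, and $(v^\sharp,\alpha)\in\mathscr K$ (its $L^r$-gradient bound follows from the energy estimate of Proposition~\ref{pro:ex_relax} since $v^\sharp$ agrees with $v$ on $\partial_p Q_T\setminus(\{T\}\times\Omega)$). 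Since the $F^*$-term is unchanged while $v^\sharp\ge v$, and $m_0>0$, $j>0$ by Assumption~\ref{hp3}, we get $\mathcal K(v^\sharp,\alpha)\le\mathcal K(v,\alpha)$, so minimality forces $v^\sharp=v$ a.e.\ on $\{0\}\times\Omega$ and on $[0,T]\times\partial\Omega$. To propagate this to $\{T\}\times\Omega$, set $w:=v^\sharp-v\ge0$; subtracting the supersolution inequality for $v^\sharp$ from the subsolution inequality for $v$ and using the convexity of $H$ shows that $w_t-H_p(\nabla v^\sharp)\cdot\nabla w\le0$ distributionally, and along the characteristics $\dot X=-H_p(\nabla v^\sharp)$ — which all originate on $\partial_p Q_T\setminus(\{T\}\times\Omega)$, because $j>0$ makes the velocity point inward at $\partial\Omega$, and where $w=0$ — $w$ is nonincreasing, hence $w\equiv0$ and $v(T)=v^\sharp(T)=\psi$. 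I expect the bulk of the effort to lie in making this characteristic/transport step rigorous with $\alpha$ merely in $L^{q'}$, or else in replacing it by a localized energetic perturbation near $\{v(T)<\psi\}$ whose $F^*$-cost is kept negligible via the vanishing of $F^*$ below $f(t,x,0)$ and the Hölder-in-time control of Lemma~\ref{lem:holder}, while the gain stays strictly positive thanks to $m_0>0$.
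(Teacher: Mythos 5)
The first two-thirds of your proposal coincide with the paper's proof: you take $(v^n,\alpha^n)=(v^{1/n},\alpha^{1/n})$ from Lemma~\ref{lem:regularization}, verify the strong convergence (which the lemma hands you directly), and obtain $v^n(0)\rightharpoonup v(0)$ in $L^1(\Omega)$ by running \eqref{liminf}--\eqref{limsup} with an arbitrary nonnegative $L^\infty$ weight in place of $m_0$ -- this is the paper's argument essentially verbatim. (Your claim of \emph{strong} $L^1$ convergence of $v^n(0)$ is a strengthening you do not need; only the two one-sided inequalities are required.) The continuity-of-$\mathcal K$ computation confirming that $(v^n,\alpha^n)$ is a minimizing sequence is also fine, via the generalized dominated convergence for the $F^*$ term together with \eqref{liminf} and \eqref{eq:convJ}.

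Where you deviate, and where the proposal has a genuine gap, is the claim $v(T)=\psi$. You set up a comparison with the Lax--Oleinik value function $v^\sharp$ and then a transport/characteristics argument for $w=v^\sharp-v$, and you flag yourself that this cannot readily be made rigorous when $\alpha\in L^{q'}$ only and $v,v^\sharp\in BV$; the alternative ``localized energetic perturbation'' is left unspecified. The idea you are missing is much shorter and uses a fact you never exploited: the regularized sequence is built to have $v^n(T)=\psi$ \emph{exactly}. The paper writes, for $\phi\in\mathcal C^\infty(\overline\Omega)$, the fundamental-theorem identity
\begin{equation*}
\into v(T)\phi\,dx=\into v(0)\phi\,dx+\intif\phi\,dv_t,
\end{equation*}
and passes to the limit in the same identity for $v^n$, using $v^n(0)\rightharpoonup v(0)$ in $L^1$ and $v^n_t\rightharpoonup v_t$ weakly as measures (the $L^1$ bound on $v^n_t$ following from the subsolution inequality and the $L^{q'}$ bound on $\alpha^n$); since the left side equals $\into\psi\phi\,dx$ for every $n$, it equals $\into v(T)\phi\,dx$ in the limit, so $v(T)=\psi$. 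One caveat worth noting if you polish this: the step $v^n_t\rightharpoonup v_t$ implicitly requires that no defect measure concentrates on $\{T\}\times\Omega$ in the limit (which would occur if $v(T^-)<\psi$ on a set of positive measure), so minimality must be invoked -- and this is the one place where the paper is terse and your comparison-principle instinct points at a real subtlety. But the characteristics machinery is not the way to supply it; if anything, a shift-competitor or perturbation argument exploiting $m_0,j>0$ would be the natural complement to the FTC identity.
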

\begin{proof}
	If there exists a minimum $(v,\alpha)$ of $\mathcal K$, we take as a minimizing sequence the sequence $(v^n,\alpha^n)=(v^{\eps_n},\alpha^{\eps_n})$, where $\eps_n=\frac1n$ and $(v^\eps,\alpha^\eps)$ is defined in Lemma \ref{lem:regularization}.
	According to the same Lemma, we have $(v^n,\alpha^n)\in\mathscr I\times\mathcal C^\infty(Q_T)$.
	In particular, we also know from the Lemma that $(v^n,\alpha^n,\nabla v^n)\to(v,\alpha,\nabla v)$ in $L^1([0,T];W^{1,1}(\Omega))\times L^{q'}(Q_T)\times L^r(Q_T)$.
	
	To prove that $v^n(0)\weak v(0)$, we use \eqref{liminf} and \eqref{limsup}. In those results, we just used that $m_0$ is an $L^\infty$ non-negative function. Hence, for all $\phi\in L^\infty(\Omega)$ such that $\phi\ge0$ a.e., we have
	\begin{align*}
		&\limsup\limits_{n\to+\infty}\left(-\into v^n(0)\,\phi\,dx\right)\le -\into v(0)\,\phi\,dx\implies \liminf\limits_{n\to+\infty}\into v^n(0)\,\phi\,dx\ge \into v(0)\,\phi\,dx\\
		&\liminf\limits_{n\to+\infty}\left(-\into v^n(0)\,\phi\,dx\right)\ge-\into v(0)\,\phi\,dx\implies \limsup\limits_{n\to+\infty}\into v^n(0)\,\phi\,dx\le \into v(0)\,\phi\,dx\,.
	\end{align*}
	These two inequalities  imply $v^n(0)\weak v(0)$ in $L^1(\Omega)$.
	
	To conclude, we prove that $v(T)=\psi$ in the sense of traces. For $\phi\in\mathcal C^\infty(\overline\Omega)$, we have
	$$
	\into v(T)\phi\,dx=\into v(0)\phi\,dx+\intif \phi\, dv_t\,.
	$$
	Because $v^n_t\weak v_t$ in the sense of measures and $v^n(0)\weak v(0)$ in $L^1$, we have
	$$
	\into v(T)\phi\,dx=\lim_{n\to+\infty}\left(\into v^n(0)\phi\,dx+\intif \phi\, dv^n_t\right)=\lim\limits_{n\to+\infty}\into v^n(T)\phi\,dx=\into\psi\,\phi\,dx\,,
	$$
	which implies $v(T)=\psi$ in the sense of traces and concludes the proof.
\end{proof}

\section{Existence of solutions}\label{sec7}

We now turn our attention to the main results of this paper, beginning with an existence result for Problem \ref{problem_mfg}.
\begin{teo}\label{teo:ex_mfg}
	Suppose Assumptions \ref{hp}, \ref{hp2}, and \ref{hp3} hold, and let $(m,w)\in\mathscr M$ and $(u,\alpha)\in\mathscr K$ be solutions of Problems \ref{var2} and \ref{var3} respectively. 
	Then, almost everywhere on the set where $m>0$, we have $\alpha(t,x)=f(t,x,m(t,x))$ and $w(t,x)=-m(t,x)H_p(t,x,\nabla u(t,x))$.
	Furthermore, $(u,m)$ is a weak solution of Problem \ref{problem_mfg} in the sense of Definition \ref{defmfg}.
\end{teo}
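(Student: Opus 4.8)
The plan is to turn the duality identity of Theorem~\ref{thm:duality} into two pointwise Fenchel--Young equalities, one for $F$ and one for $H$, and then to read off the five conditions of Definition~\ref{defmfg}. \emph{Step 1 (the equality for $F$).} By Proposition~\ref{pro:ex_relax} the pair $(u,\alpha)$ minimizes $\mathcal K$, and by Theorem~\ref{duality} the pair $(m,w)$ is the unique minimizer of $\mathcal M$; hence Theorem~\ref{thm:duality} gives $\mathcal K(u,\alpha)+\mathcal M(m,w)=0$. Writing $\mathcal M$ in the equivalent form \eqref{eq:seconda} and rearranging, this reads
\[
\intif\Big[F^*(t,x,\alpha)+F(t,x,m)+mL\big(\tfrac wm\big)\Big]dxdt+\into\psi(x)\,m(T,dx)=\into u(0)\,m_0\,dx+\intb ju\,dxdt .
\]
Bounding the right-hand side from above by Corollary~\ref{cor:dual} and cancelling the common terms yields $\intif\big[F^*(t,x,\alpha)+F(t,x,m)-m\alpha\big]dxdt\le0$, and, since the integrand is pointwise nonnegative by Fenchel--Young, it vanishes a.e. On $\{m>0\}$, where $F(t,x,\cdot)$ is differentiable with derivative $f$, the equality case forces $\alpha=f(t,x,m)$; on $\{m=0\}$ it gives $F^*(t,x,\alpha)=0$, hence $\alpha\le f(t,x,0)=f(t,x,m)$ by \eqref{vanish_Fstar} and the strict monotonicity of $f$. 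In particular $\alpha\le f(t,x,m)$ a.e., and every inequality in the chain is now an equality, so \eqref{dual_leq} holds with equality.

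\emph{Step 2 (the equality for $H$).} To upgrade the equality in \eqref{dual_leq} into the relation for $w$, I revisit the proof of Corollary~\ref{cor:dual} using the regularizing sequence $(v^\eps,\alpha^\eps)\in\mathscr I\times\mathcal C^\infty(Q_T)$ from Corollary~\ref{cor:strong_grad}: thus $\nabla v^\eps\to\nabla u$ strongly in $L^r(Q_T)$, $v^\eps(0)\weak u(0)$ in $L^1(\Omega)$, $v^\eps(T)=\psi$, and each $v^\eps$ is a classical subsolution of \eqref{eq:fpgen}. Testing the Fokker--Planck equation with $v^\eps$ (formula \eqref{generalformula}) and using $v^\eps_t\ge H(\nabla v^\eps)-\alpha^\eps$ together with $m\ge0$ gives, for all $\eps$,
\[
\into m_0\,v^\eps(0)\,dx+\intif\big(mH(\nabla v^\eps)+w\cdot\nabla v^\eps\big)dxdt+\intb jv^\eps\,dxdt\le\into m(T)\psi\,dx+\intif m\alpha^\eps\,dxdt .
\]
Along a subsequence on which $\nabla v^\eps\to\nabla u$ a.e., the integrand $mH(\nabla v^\eps)+w\cdot\nabla v^\eps$ is bounded below, uniformly in $\eps$, by the fixed function $-mL(w/m)\in L^1(Q_T)$, so Fatou's lemma applies to its $\liminf$; combining this with the convergence of the boundary, initial and coupling terms and with the equality in \eqref{dual_leq} from Step~1, I obtain $\intif\big[mH(\nabla u)+w\cdot\nabla u+mL(w/m)\big]dxdt\le0$. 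The integrand is again pointwise nonnegative by Fenchel--Young (on $\{m=0\}$ the convention in Problem~\ref{var2} forces $w=0$, and all three terms vanish), hence it is zero a.e.; the equality case then gives $-w/m=H_p(\nabla u)$, i.e.\ $w=-mH_p(\nabla u)$, on $\{m>0\}$.

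\emph{Step 3 (verification of Definition~\ref{defmfg}).} Item~\ref{solitem1}: $(u,\alpha)\in\mathscr K$ satisfies \eqref{eq:distribution}, and since $\alpha\le f(t,x,m)$ a.e.\ while the test functions are nonnegative, \eqref{eq:distribution} still holds with $f(t,x,m)$ in place of $\alpha$; the trace $u(T)=\psi$ comes from Corollary~\ref{cor:strong_grad}. Item~\ref{solitem3}: this is precisely $(m,w)\in\mathscr M$ with $w=-mH_p(\nabla u)$. Item~\ref{solitem4}: from $mL(w/m)=mH^*(H_p(\nabla u))=m\big(H_p(\nabla u)\cdot\nabla u-H(\nabla u)\big)\in L^1$ and the growth and strict convexity of $H,H^*$ (Remark~\ref{rem21}) one gets $m|\nabla u|^r\in L^1$, which with $\nabla u\in L^r(Q_T)$ gives $(1+m)|\nabla u|^r\in L^1(Q_T)$. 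Item~\ref{solitem5}: this is the equality version of \eqref{dual_leq}, with $m\alpha=mf(t,x,m)$ a.e.\ (using $\alpha=f$ on $\{m>0\}$ and $m\alpha=mf=0$ on $\{m=0\}$). Finally, Item~\ref{solitem2}: Item~\ref{solitem1} gives $-u_t^{ac}+H(\nabla u)\le f(t,x,m)$ a.e., so $m\big(-u_t^{ac}+H(\nabla u)-f(t,x,m)\big)\le0$ a.e.; on the other hand, testing the Fokker--Planck equation against $v^\eps$, passing to the limit and comparing with Item~\ref{solitem5} forces $\intif m\big(-u_t^{ac}+H(\nabla u)-f(t,x,m)\big)dxdt\ge0$, where the singular part of $u_t$ is a nonnegative measure (since $-u_t\le\alpha-H(\nabla u)\in L^1$) and so does not affect the sign. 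Hence the integrand vanishes a.e.\ and $-u_t^{ac}+H(\nabla u)=f(t,x,m)$ on $\{m>0\}$, which completes the verification.

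\emph{Main obstacle.} The delicate points are the limit in Step~2 and the identity in Item~\ref{solitem2}. Since $w$ lies only in $L^\beta$ with $\beta<r'$, the term $\int w\cdot\nabla v^\eps$ does not converge by dominated convergence; one must use that it is bounded below by the fixed $L^1$ function $-mL(w/m)$ and exploit the \emph{strong} $L^r$ convergence of the gradients from Corollary~\ref{cor:strong_grad}. Similarly, pairing the measure $u_t$ (its absolutely continuous and singular parts) against the merely $L^q$ density $m$ to saturate the Hamilton--Jacobi inequality on $\{m>0\}$ must be carried out through the regularization $v^\eps$ and the sign of $u_t^{\mathrm{sing}}$; this is the technical heart of the argument.
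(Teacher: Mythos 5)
Your Steps~1 and~2 are correct and follow the paper's route. Step~1 is the same chain-of-inequalities argument: $\mathcal K(u,\alpha)+\mathcal M(m,w)=0$, Fenchel--Young for $F$, and Corollary~\ref{cor:dual} force $F^*(t,x,\alpha)+F(t,x,m)=m\alpha$ a.e., whence $\alpha=f(t,x,m)$ on $\{m>0\}$ and $\alpha\le f(t,x,0)$ on $\{m=0\}$. (The paper instead replaces $\alpha$ by $\alpha\vee f(t,x,0)$ at the outset and gets the equality $\alpha=f(t,x,m)$ everywhere; either choice works here.) In Step~2 your Fatou argument on the nonnegative integrand $mH(\nabla v^\eps)+w\cdot\nabla v^\eps+mL(w/m)$, combined with the strong $L^r$ convergence of $\nabla v^\eps$ from Corollary~\ref{cor:strong_grad}, is a slightly more streamlined version of the paper's argument (which localizes to $\{m>\eps\}$, uses weak lower semicontinuity of $H$, and an $L^1$-convergence observation); both are correct and yield $w=-mH_p(\nabla u)$ a.e.\ on $\{m>0\}$, and $=0$ on $\{m=0\}$ by the convention in Problem~\ref{var2}. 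Items~\ref{solitem1}, \ref{solitem3}, \ref{solitem4}, \ref{solitem5} are likewise fine.

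The gap is in your verification of Item~\ref{solitem2}. You claim that "testing the Fokker--Planck equation against $v^\eps$, passing to the limit and comparing with Item~\ref{solitem5} forces $\intif m(-u_t^{ac}+H(\nabla u)-f(t,x,m))\,dxdt\ge0$, where the singular part of $u_t$ ... does not affect the sign." This does not go through as stated. The term $\intif m\,v^\eps_t\,dxdt$ does not split into $\intif m\,u_t^{ac}\,dxdt$ plus a nonnegative remainder in any justified way: $m$ is only in $L^q$, so it cannot be paired against the singular measure $u_t^s$ (which would need $m$ continuous, or at least in a space dual to measures), and $u_t^{ac}$ is only known to be in $L^1$, not $L^{q'}$, so $\intif m\,u_t^{ac}$ is not a priori well-defined either. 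In fact, if you plug the pointwise inequality $v^\eps_t\ge H(\nabla v^\eps)-\alpha^\eps$ into the Fokker--Planck test identity and pass to the $\liminf$ (the only manipulation the available convergences allow), you recover exactly the conclusion of Step~2, $\intif[mH(\nabla u)+w\cdot\nabla u+mL(w/m)]\le0$, and nothing about $u_t^{ac}$. The paper avoids this obstruction entirely by an optimality argument: set $\tilde\alpha:=(-u_t^{ac}+H(\nabla u))\vee f(t,x,0)$. Then $\tilde\alpha\le\alpha$ (so $\tilde\alpha\in L^{q'}$), the inequality $-u_t+H(\nabla u)\le\tilde\alpha$ still holds as measures because $-u_t^s\le0$, so $(u,\tilde\alpha)\in\mathscr K$, and $F^*$ nondecreasing gives $\mathcal K(u,\tilde\alpha)\le\mathcal K(u,\alpha)$. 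Optimality of $(u,\alpha)$ forces $F^*(t,x,\tilde\alpha)=F^*(t,x,\alpha)$ a.e., and strict monotonicity of $F^*$ on $[f(t,x,0),+\infty)$ gives $\tilde\alpha=\alpha=f(t,x,m)$; this yields $-u_t^{ac}+H(\nabla u)=f(t,x,m)$ precisely where $f(t,x,m)>f(t,x,0)$, i.e.\ where $m>0$. You should replace your Item~\ref{solitem2} argument with this one (or supply a genuine justification of the limiting pairing, which seems hard to do without essentially reproducing the paper's device).
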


\begin{proof}
	Let $(u,\alpha)$ and $(m,w)$ be as in the statement of the theorem. By Remark \ref{rem23}, $F^*(t,x,a)=0$ for $a\le f(t,x,0)$. 
	Therefore, we may assume without loss of generality that $\alpha(t,x)\ge f(t,x,0)$ for all $(t,x)\in Q_T$ by replacing $\alpha$ with $\max\{\alpha(t,x), f(t,x,0)\}$. From \eqref{FinalDuality}, we have
	$$
	\mathcal K(u,\alpha)+\mathcal M(m,w)=0 .
	$$
	Accordingly,
	\begin{align*}
		\intif \big(F^*(t,x,\alpha)+F(t,x,m)\big) dxdt=&\into u(0) m_0 dx-\into\psi(x) m(T,dx)\\
		&+\intb ju dxdt-\intif m{L\left(x,\frac wm\right)}dxdt .
	\end{align*}
	For the left-hand side, the definition of $F^*$ implies
	$$
	\intif \alpha m dxdt\le\intif \big(F^*(t,x,\alpha)+F(t,x,m)\big) dxdt .
	$$
	For the right-hand side, \eqref{dual_leq} states that
	$$
	\into u(0)m_0 dx-\into m(T)\psi dx+\intb ju dxdt-\intif m {L\left(x,\frac wm\right)}dxdt\le\intif m\alpha dxdt .
	$$
	Comparing both sides, we notice that
	we have a chain of inequalities that starts and ends with the same term. 
	Therefore, all these inequalities must actually be equalities. Consequently, \eqref{dual_eq} holds, and furthermore,
	$$
	\intif \alpha m dxdt=\intif \big(F^*(t,x,\alpha)+F(t,x,m)\big) dxdt.
	$$
	Thus,
	$$
	\alpha m=F^*(t,x,\alpha)+F(t,x,m) ,\quad a.e..
	$$
	From the last equality it follows that $\alpha$ and $m$ are conjugated variables.
	Since $F$ is differentiable in $m$ on the set where $m>0$, it follows that
	$$
	\alpha(t,x)=F_m(t,x,m(t,x))=f(t,x,m(t,x))\qquad\text{a.e. }(t,x)\in \{m>0\} .
	$$
	If $m(t,x)=0$, by \eqref{vanish_Fstar}, we have $F^*(t,x,\alpha(t,x))=0$. This implies $\alpha(t,x)\le f(t,x,0)$. Because, by assumption, $\alpha(t,x)\ge f(t,x,0)$, we get $\alpha(t,x)=f(t,x,0)$.
	
	Consequently, we have
	\begin{equation}\label{cost_true}
		\alpha=f(t,x,m) .
	\end{equation}
	Using \eqref{dual_eq} and the proof of Corollary \ref{cor:dual}, we obtain
	that equality must hold in \eqref{eqL}. Therefore, the liminf appearing in \eqref{eqL} is a limit. Next, we consider the minimizing sequence from Corollary \ref{cor:strong_grad}. Observing from the same result that \( v^n(0) \weak v(0) \) in \( L^1(\Omega) \), we have
		\begin{equation}\label{eq:lim_int}
			\intif m {L\left(x,\frac{w}{m}\right)} dxdt = \lim_{n\to+\infty} \intif m \left( -\frac{w}{m} \cdot \nabla v^n - {H(x,\nabla v^n)} \right) dxdt.
		\end{equation}
		Consequently,
		\begin{equation}\label{eq:lim_zero}
			\lim_{n\to+\infty} \intif m \left[ {L\left(x,\frac{w}{m}\right)} + \frac{w}{m} \cdot \nabla v^n + {H(x,\nabla v^n)} \right] dxdt = 0.
		\end{equation}
		
		Moreover, since \( w \in L^\beta \) with \( \beta = \frac{q r}{q r - q + 1} \) and \( \nabla v^n \) is uniformly bounded in \( L^r \), converging weakly to \( \nabla u \) in \( L^r \), we obtain
		\begin{equation}
		\label{eq:conv_w}
		\lim_{n\to \infty} \intif w \cdot \nabla v^n \, dxdt = \intif w \cdot \nabla u \, dxdt,
		\end{equation}
		provided \( q \leq r' \). This inequality holds under the conditions specified in Assumption \ref{hp4}.
		
		Fix \( M > 0 \). Then,
		\begin{align*}
			\lim_{n\to+\infty} \intif m H(x,\nabla v^n) \, dxdt &\geq \lim_{n\to+\infty} \int_0^T \int_{m + |\nabla u| < M} m H(x,\nabla v^n) \, dxdt \\
			&\geq \int_0^T \int_{m + |\nabla u| < M} m H(x,\nabla u) \, dxdt \\
			&\quad + \lim_{n\to+\infty} \int_0^T \int_{m + |\nabla u| < M} m H_p(x,\nabla u) \cdot (\nabla v^n - \nabla u) \, dxdt \\
			&= \int_0^T \int_{m + |\nabla u| < M} m H(x,\nabla u) \, dxdt,
		\end{align*}
		where the last equality follows because the weak convergence of \( \nabla v^n \) to \( \nabla u \) implies that the final integral tends to zero.
		
		Letting \( M \to \infty \), we deduce that
		\begin{equation}\label{eq:H_limit}
			\lim_{n\to+\infty} \intif m H(x,\nabla v^n) \, dxdt \geq \intif m H(x,\nabla u) \, dxdt.
		\end{equation}
		Therefore, combining equations \eqref{eq:lim_zero}, \eqref{eq:conv_w}, and \eqref{eq:H_limit}, we obtain
		\begin{equation}\label{eq:ineq_zero}
			\intif m \left[ {L\left(x,\frac{w}{m}\right)} + \frac{w}{m} \cdot \nabla u + {H(x,\nabla u)} \right] dxdt \leq 0.
		\end{equation}
		Since \( L \) and \( H \) are convex conjugate functions, we have yields
		\[
		{L\left(x,\frac{w}{m}\right)} + {H(x,\nabla u)} \geq -\frac{w}{m} \cdot \nabla u,
		\]
		which holds pointwise for the integrand. Consequently, the inequality in \eqref{eq:ineq_zero} can only be satisfied if the integrand vanishes almost everywhere, that is,
		\[
		{L\left(x,\frac{w}{m}\right)} + \frac{w}{m} \cdot \nabla u + {H(x,\nabla u)} = 0 \quad \text{a.e. in } Q_T.
		\]
		This implies
		\[
		w = -m {H_p(x,\nabla u)} \quad \text{a.e. in } \{ m > 0 \}.
		\]
		Since \( w = 0 \) almost everywhere in the set \( \{ m = 0 \} \), it follows that
		\begin{equation}\label{drift_true}
			w = -m {H_p(x,\nabla u)} \quad \text{a.e. in } Q_T.
		\end{equation}
		By substituting \eqref{cost_true} and \eqref{drift_true} into the equations satisfied by \( (u,\alpha) \) and \( (m,w) \), we conclude that \( (u,m) \) satisfies Item~\ref{solitem1} and Item~\ref{solitem3} of Definition \ref{defmfg}. Furthermore, since \( {L\left(x,\frac{w}{m}\right)} \in L^1(m) \) and \eqref{drift_true} holds, Item~\ref{solitem4} follows directly from the growth assumptions on \( H \) and \( H^* \).

	To prove Item~\ref{solitem2}, we denote by $u_t^{ac}$ and $u_t^s$ the absolutely continuous and singular part of the measure $u_t$. Since $-u_t+ {H(x,\nabla u)}\le\alpha$ in the sense of measures, decomposing the two sides of the inequality into their absolutely continuous and singular part, we get
	$$
	-u_t^{ac}+ {H(x,\nabla u)}\le\alpha ,\qquad -u_t^s\le 0 .
	$$
	Let $\tilde\alpha:=(-u_t^{ac}+ {H(x,\nabla u)})\vee f(t,x,0)$. We have
	$$
	-u_t+ {H(x,\nabla u)}=-u_t^{ac}+ {H(x,\nabla u)}-u_t^s\le\tilde\alpha.
	$$
	Moreover, the inequality also holds in the sense of distributions as in \eqref{eq:distribution}. Hence $(u,\tilde\alpha)\in\mathscr K$. Further, because $\alpha\ge f(t,x,0)$, we also have $\tilde\alpha\le\alpha$, which implies $F^*(t,x,\tilde\alpha)\le F^*(t,x,\alpha)$ for a.e. $(t,x)\in Q_T$. This implies $\mathcal K(u,\tilde\alpha)\le \mathcal K(u,\tilde\alpha)$.
	
	Since $(u,\alpha)$ is optimal, we must have $F^*(t,x,\tilde\alpha)=F^*(t,x,\alpha)$. Recall that $F^*(t,x,s)$ is strictly increasing for $s\ge f(t,x,0)$, which implies $\tilde\alpha=\alpha$. Then
	$$
	\big(-u_t^{ac}+ {H(x,\nabla u)}\big)\vee f(t,x,0)=f(t,x,m).
	$$
	Thus, 
	$$
	 -u_t^{ac}+ {H(x,\nabla u)}=f(t,x,m)
	$$
	whenever $f(t,x,m)>f(t,x,0)$, i.e. when $m>0$. This concludes the proof.
\end{proof}

We have proved that the minima of the variational Problems \ref{var2} and \ref{var3} provide solutions for the Problem \ref{problem_mfg}, in the sense of Definition \ref{defmfg}. 
The subsequent proposition verifies the converse.
\begin{pro}\label{reverse}
	Consider the setting of Problems \ref{var2} and \ref{var3}. 
	Suppose Assumptions \ref{hp}, \ref{hp2}, and \ref{hp3} hold. Let $(u,m)\in BV(Q_T)\times L^p(Q_T)$ solve Problem \ref{problem_mfg}, in the sense of Definition \ref{defmfg}. Let $\alpha(t,x):=f(t,x,m(t,x))$ and $w(t,x):=-mH_p(t,x,\nabla u(t,x))$.  Then, $(u,\alpha)\in\mathscr K$ and $(m,w)\in\mathscr M$, and $(m,w)$ and $(u,\alpha)$ minimize, respectively, the functionals $\mathcal M$ and $\mathcal K$ in Problems \ref{var2} and \ref{var3}.
\end{pro}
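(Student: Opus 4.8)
\emph{Overview.} The plan is to verify that $(u,\alpha)$ and $(m,w)$ are admissible for Problems~\ref{var3} and \ref{var2}, to establish the single identity $\mathcal K(u,\alpha)+\mathcal M(m,w)=0$, and then to read off optimality of both pairs directly from the duality relation \eqref{FinalDuality}.

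\emph{Admissibility.} That $(u,\alpha)\in\mathscr K$ is essentially a restatement of Definition~\ref{defmfg}: Item~\ref{solitem1} gives \eqref{eq:fpgen}--\eqref{eq:distribution} with right-hand side $\alpha=f(t,x,m)$ together with $u(T,\cdot)=\psi$ in the trace sense, Item~\ref{solitem4} gives $\nabla u\in L^r(Q_T)$, and the upper bound on $f$ in Item~\ref{item3} of Assumption~\ref{hp}, together with $m\in L^q(Q_T)$, gives $\alpha\in L^{q'}(Q_T)$; also $u\in BV(Q_T)$ by hypothesis. For $(m,w)\in\mathscr M$: $m\ge 0$ and $m\in L^q(Q_T)\subset L^1(Q_T)$; the growth of $H_p$ implied by Item~\ref{item2} of Assumption~\ref{hp}, combined with $m|\nabla u|^r\in L^1(Q_T)$ from Item~\ref{solitem4} and H\"older's inequality, yields $w=-mH_p(\nabla u)\in L^\theta(Q_T;\R^N)$ for some $\theta>1$ (the same interpolation as in Proposition~\ref{prop_regM}), and Item~\ref{solitem3} is precisely \eqref{either} for this $w$. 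The same bounds, plus the elementary estimate $mL(w/m)\le Cm(1+|\nabla u|^r)\in L^1(Q_T)$ recorded after Definition~\ref{defmfg}, show $\mathcal K(u,\alpha),\mathcal M(m,w)<+\infty$.

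\emph{The identity $\mathcal K(u,\alpha)+\mathcal M(m,w)=0$.} Since $\alpha(t,x)=f(t,x,m(t,x))$ and $f$ is continuous and increasing, $\alpha(t,x)$ lies in the subdifferential of $F(t,x,\cdot)$ at $m(t,x)$: at $m>0$ because $F$ is differentiable with $\partial_m F=f$, and at $m=0$ because $f(t,x,0)\in(-\infty,f(t,x,0)]=\partial_m F(t,x,0)$, consistently with $F^*(t,x,f(t,x,0))=0$ from \eqref{vanish_Fstar}. Hence the Fenchel equality $F^*(t,x,\alpha)+F(t,x,m)=\alpha m$ holds a.e.\ in $Q_T$. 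Moreover, since $w\in L^\theta$ with $\theta>1$, the trace $m(T)$ is well defined by Proposition~\ref{pro33}, and testing \eqref{either} (equivalently \eqref{generalformula} at $t=T$) with the time-independent $\xi=\psi\in W^{1,\infty}(Q_T)$ gives $\intif w\cdot\nabla\psi\,dxdt+\into\psi\,m_0\,dx+\intb\psi j\,dxdt=\into\psi(x)\,m(T,dx)$, which is exactly the identity used in Remark~\ref{remark35} to pass to the form \eqref{eq:seconda} of $\mathcal M$. Substituting the Fenchel equality and this identity into $\mathcal K(u,\alpha)+\mathcal M(m,w)$ leaves
\[
\mathcal K(u,\alpha)+\mathcal M(m,w)=\intif\Big[\alpha m+mL\Big(\tfrac{w}{m}\Big)\Big]dxdt+\into\psi(x)\,m(T,dx)-\into u(0)m_0\,dx-\intb ju\,dxdt,
\]
and the right-hand side vanishes by Item~\ref{solitem5} of Definition~\ref{defmfg}, i.e.\ by \eqref{dual_eq} with the present $w$ and $\alpha$.

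\emph{Optimality and the main difficulty.} By the duality \eqref{FinalDuality} (Theorems~\ref{duality} and \ref{thm:duality}), for every $(v,\beta)\in\mathscr K$ and $(m',w')\in\mathscr M$ one has $\mathcal K(v,\beta)\ge\inf_{\mathscr K}\mathcal K=-\min_{\mathscr M}\mathcal M\ge-\mathcal M(m',w')$, hence $\mathcal K(v,\beta)+\mathcal M(m',w')\ge 0$. Combined with $\mathcal K(u,\alpha)+\mathcal M(m,w)=0$, taking $(m',w')=(m,w)$ gives $\mathcal K(v,\beta)\ge-\mathcal M(m,w)=\mathcal K(u,\alpha)$ for all $(v,\beta)\in\mathscr K$, and taking $(v,\beta)=(u,\alpha)$ gives $\mathcal M(m',w')\ge-\mathcal K(u,\alpha)=\mathcal M(m,w)$ for all $(m',w')\in\mathscr M$; thus $(u,\alpha)$ minimizes $\mathcal K$ and $(m,w)$ minimizes $\mathcal M$. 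The proof is mostly a matter of assembling results already available; the only genuinely delicate points are (i) producing enough integrability of $w$ to legitimately use $\psi$ as a test function in the Fokker--Planck equation and to make sense of $m(T)$, and (ii) the Fenchel equality on the set $\{m=0\}$, where \eqref{vanish_Fstar} and the value of $f$ at $0$ enter.
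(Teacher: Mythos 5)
Your proof is correct, and it takes a genuinely different — and, in fact, slicker — route than the paper's. The paper proves optimality of $(u,\alpha)$ and $(m,w)$ separately and ``by hand'': for $(u,\alpha)$, it compares $\mathcal I(v)$ to $\mathcal K(u,\alpha)$ for arbitrary smooth $v\in\mathscr I$ using the convexity of $F^*$ (the inequality $F^*(t,x,-v_t+H(\nabla v))\ge F^*(t,x,\alpha)+m(-v_t+H(\nabla v)-\alpha)$), then tests the Fokker--Planck equation with $v$, uses the definition of $L$, and finally \eqref{dual_eq}; for $(m,w)$, it compares $\mathcal M(\mu,z)$ to $\mathcal M(m,w)$ via convexity of $F$, then invokes \eqref{dual_leq} and \eqref{dual_eq}. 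You instead establish the single scalar identity $\mathcal K(u,\alpha)+\mathcal M(m,w)=0$ — via the pointwise Fenchel equality $F^*(t,x,\alpha)+F(t,x,m)=\alpha m$ (which, as you correctly note, also holds on $\{m=0\}$ because $\alpha=f(t,x,0)\in\partial_m F(t,x,0)$ and $F^*(t,x,f(t,x,0))=0$), the reformulation of $\mathcal M$ as in Remark~\ref{remark35}, and Item~\ref{solitem5} of Definition~\ref{defmfg} — and then read off optimality of both pairs at once from the strong duality \eqref{FinalDuality}, which yields weak duality $\mathcal K(v,\beta)+\mathcal M(m',w')\ge 0$. This replaces two convexity-based comparisons with one algebraic cancellation and makes the role of \eqref{dual_eq} as ``the complementarity condition'' transparent. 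One small stylistic remark: you cite ``the same interpolation as in Proposition~\ref{prop_regM}'' for the integrability of $w$, but that proposition starts from the assumption $\mathcal M(m,w)<+\infty$; here it is cleaner to observe that $m\in L^q$ and $m|H_p(\nabla u)|^{r'}\le Cm(1+|\nabla u|^r)\in L^1$ hold directly from Item~\ref{solitem4} of Definition~\ref{defmfg}, and then run the Hölder step of that proposition. This is exactly what you intend, so the gap is only expository.
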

\begin{proof}
	Let $(u,m)$ be a solution of Problem \ref{problem_mfg}. Let $\alpha=f(t,x,m)$ and $w=-m {H_p(x,\nabla u)}$.  We have $(u,\alpha)\in\mathscr K$. We must prove that $(u,\alpha)$ minimizes $\mathcal K$. Thanks to \eqref{FinalDuality}, it suffices to prove that $\mathcal I(v)\ge\mathcal K(u,\alpha)$ $\forall v\in\mathscr I$, where $\mathscr I$ is defined in \eqref{eq:Iset}. We have
	$$
	\mathcal I(v)=\intif F^*\big(t,x,-v_t+ {H(x,\nabla v)}\big) dxdt-\into v(0) dm_0(x)-\intb jv dxdt .
	$$
	The convexity of $F^*$ implies
	$$
	F^*(t,x, -v_t+ {H(x,\nabla v)})\ge F^*(t,x,\alpha)+m(-v_t+ {H(x,\nabla v)}-\alpha) ,
	$$
	where we also used $F^*_s(\alpha)=(F_m)^{-1}(\alpha)=m$. Hence,
	\begin{align*}
		\mathcal I(v)\ge\intif F^*(t,x,\alpha)-\into v(0) dm_0&-\intb jv dxdt\\
		&+\intif m\big(-v_t+ {H(x,\nabla v)}-\alpha\big)dxdt .
	\end{align*}
	Since $v\in\mathcal C^1(Q_T)$, we can use it as a test function for the Fokker-Planck equation. 
	Accordingly, from \eqref{generalformula}, we have
	\begin{align*}
		\into v(0) dm_0+\intb jv dxdt&+\intif mv_t dxdt\\&=\into m(T)\psi dx+\intif m {H_p(x,\nabla u)}\nabla v dxdt .
	\end{align*}
	Therefore,
	$$
	\mathcal I(v)\ge \intif F^*(t,x,\alpha) dxdt-\into m(T)\psi+\intif m\big( {H(x,\nabla v)}- {H_p(x,\nabla u)}\cdot\nabla v-\alpha\big)dxdt .
	$$
	From the definition of $L$, we have
	\begin{align*}
		\intif m\big( {H(x,\nabla v)}- {H_p(x,\nabla u)}\cdot\nabla v\big)dxdt=-&\intif m\left(-\frac wm\cdot\nabla v- {H(x,\nabla v)}\right)dxdt\\
		\ge-&\intif m {L\left(x,\frac wm\right)}dxdt .		
	\end{align*}
	Substituting into the previous inequality and using Item~\ref{solitem5} of Definition \ref{defmfg}, we get
	\begin{align*}
		\mathcal I(v)\ge&\intif F^*(t,x,\alpha) dxdt-\into m(T)\psi  dx-\intif m\alpha dxdt-\intif m {L\left(x,\frac wm\right)}dxdt\\
		=&\intif F^*(t,x,\alpha) dxdt-\into u(0)dm_0-\intb ju dxdt=\mathcal K(u,\alpha) ,
	\end{align*}
	which proves that $(u,\alpha)$ minimizes $\mathcal K$.\\
	
	Similar computations prove that $(m,w)$ is a minimum for $\mathcal M$. Given $(\mu,z)\in\mathscr M$, we can assume $\mathcal M(\mu,z)<+\infty$. Hence, $(\mu,z)\in L^s(Q_T;\R^{N+1})$ for some $s>1$, thanks to Proposition \ref{prop_regM}. We can use \eqref{eq:seconda} in Remark \ref{remark35} and write
	$$
	\mathcal M(\mu,z)=\intif \left[\mu   {L\left(x,\frac z\mu\right)}+F(t,x,\mu)\right]dxdt+\into\psi \mu(T,dx) .
	$$
	The convexity of $F$ implies
	$$
	F(t,x,\mu)\ge F(t,x,m)+f(t,x,m)(\mu-m)=F(t,x,m)+\alpha(\mu-m) .
	$$
	Substituting into $\mathcal M(\mu,z)$ and using \eqref{dual_leq} for the pairs $(u,\alpha)$, $(\mu,z)$, and \eqref{dual_eq} for the pairs $(u,\alpha)$, $(m,w)$, we get
	\begin{align*}
		\mathcal M(\mu,z)&=\intif\left[F(t,x,m)-\alpha m\right]dxdt+\intif \left[\alpha\mu+\mu  {L\left(x,\frac z\mu\right)}\right]dxdt+\into \psi \mu(T,dx)\\
		&\ge\intif\left[F(t,x,m)-\alpha m\right]dxdt+\into u(0)dm_0(x)+\intb ju dxdt\\
		&=\intif\left[m   {L\left(x,\frac wm\right)}+F(t,x,m)\right]dxdt+\into\psi m(T,dx)=\mathcal M(m,w) ,
	\end{align*}
	which proves that $(m,w)$ minimizes $\mathcal M$ and concludes the proof.
\end{proof}

\section{Uniqueness and Main Theorem}\label{sec8}

We conclude by establishing the uniqueness of solutions for Problem \ref{problem_mfg}, which follows directly from the uniqueness of Problems \ref{var1} and \ref{var2}.
Since the uniqueness for Problem \ref{var2} is proved in Theorem \ref{duality}, we now focus on proving the uniqueness of solutions for Problem \ref{var3}, from which the uniqueness of solutions for Problem \ref{problem_mfg} will follow.

We first generalize \eqref{dual_eq}.
\begin{lemma}
\label{lem81}
	Suppose Assumptions \ref{hp}, \ref{hp2}, \ref{hp3}, and \ref{hp4} hold. Let $(u,m)$ solve Problem \ref{problem_mfg}, in the sense of Definition \ref{defmfg}, and let $w=-m{H_p(x,\nabla u)}$, $\alpha=f(t,x,m)$. Then, for a.e. $t\in(0,T]$,
	\begin{equation}\label{gen_dual_eq}
		\into u(0) m_0 dx+\int_0^t\int_{\partial\Omega} ju dxds=\into u(t) m(t,dx)+\inti \left[m{L\left(x,\frac wm\right)}+ m\alpha\right]dxds .
	\end{equation}
\end{lemma}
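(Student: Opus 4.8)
The plan is to localize the duality inequality of Corollary~\ref{cor:dual} to the subintervals $[0,t]$ and $[t,T]$, and then upgrade the resulting one-sided estimates to equalities by comparing with the already established identity \eqref{dual_eq}. Precisely, I will prove that for a.e. $t\in(0,T]$ one has
$$\into u(0)m_0\,dx+\int_0^t\!\!\int_{\partial\Omega}ju\,dxds\le\into u(t)\,m(t,dx)+\inti\left[mL\left(\frac wm\right)+m\alpha\right]dxds$$
and, symmetrically,
$$\into u(t)\,m(t,dx)+\int_t^T\!\!\int_{\partial\Omega}ju\,dxds\le\into\psi\,m(T,dx)+\intf\left[mL\left(\frac wm\right)+m\alpha\right]dxds .$$
Once $\into u(t)\,m(t,dx)$ is known to be finite, adding these two inequalities yields exactly \eqref{dual_eq}, which by Item~\ref{solitem5} of Definition~\ref{defmfg} holds with equality; hence each of the two inequalities above must in fact be an equality, and the first one is precisely \eqref{gen_dual_eq}. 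So the task reduces to proving the two inequalities.

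Each inequality is obtained by the test-function computation already carried out in Corollary~\ref{cor:dual}, now on a subinterval. By Proposition~\ref{reverse}, $(u,\alpha)$ minimizes $\mathcal K$, so Corollary~\ref{cor:strong_grad} furnishes classical subsolutions $(u^n,\alpha^n)\in\mathscr I\times\mathcal C^\infty(Q_T)$ with $(u^n,\alpha^n,\nabla u^n)\to(u,\alpha,\nabla u)$ in $L^1([0,T];W^{1,1}(\Omega))\times L^{q'}(Q_T)\times L^r(Q_T)$ and $u^n(0)\weak u(0)$ in $L^1(\Omega)$. Using $u^n$ as a test function in \eqref{generalformula}, together with the classical pointwise bound $u^n_s\ge H(\nabla u^n)-\alpha^n$ and the Fenchel inequality $w\cdot\nabla u^n+mH(\nabla u^n)\ge-mL(w/m)$ (an identity, equal to $0$, on $\{m=0\}$, where $w=0$), one gets
$$\into u^n(0)m_0\,dx+\int_0^t\!\!\int_{\partial\Omega}ju^n\,dxds\le\into u^n(t)\,m(t,dx)+\inti\left[mL\left(\frac wm\right)+m\alpha^n\right]dxds .$$
In the limit $n\to\infty$: the term $\into u^n(0)m_0$ converges by weak $L^1$–$L^\infty$ duality (recall $C^{-1}<m_0<C$ by Assumption~\ref{hp3}), the boundary term converges because the trace operator is continuous from $L^1([0,t];W^{1,1}(\Omega))$ into $L^1([0,t]\times\partial\Omega)$, and $\inti m\alpha^n\to\inti m\alpha$ since $m\in L^q(Q_T)$ and $\alpha^n\to\alpha$ in $L^{q'}$. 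The second inequality follows identically, using the second line of \eqref{generalformula} (a test function on $[t,T]$, with $u^n(T)=\psi$) and the fact that $m(T)$ is well defined, cf.\ Proposition~\ref{pro33}.

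The only genuinely delicate point — and the sole place where Assumption~\ref{hp4} is used — is the convergence $\into u^n(t)\,m(t,dx)\to\into u(t)\,m(t,dx)$ together with the finiteness of the limit. First, $m(t)$ is a well-defined nonnegative finite measure on $\Omega$: taking $\xi\equiv1$ in \eqref{generalformula} gives $\langle m(t),1\rangle=\|m_0\|_{L^1}+\int_0^t\int_{\partial\Omega}j$, and nonnegativity follows since $m(t,\cdot)$ is the limit of the nonnegative averages $h^{-1}\int_t^{t+h}m(s,\cdot)\,ds$; moreover, for a.e. $t$ this trace agrees with the Lebesgue slice $m(t,\cdot)\in L^q(\Omega)$. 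On the subsolution side, by Fubini one has, along a subsequence and for a.e. $t$, $u^n(t,\cdot)\to u(t,\cdot)$ in $W^{1,1}(\Omega)$ with $\nabla u^n(t,\cdot)\to\nabla u(t,\cdot)$ in $L^r(\Omega)$; the exponent relation $r>\frac{Nq}{Nq+q-2}$ of Assumption~\ref{hp4} is exactly what makes the Sobolev embedding of $W^{1,r}(\Omega)$ strong enough to conclude that $u(t,\cdot)\,m(t,\cdot)\in L^1(\Omega)$ and that $u^n(t,\cdot)\to u(t,\cdot)$ in a topology in which the pairing with $m(t,\cdot)\in L^q(\Omega)$ is continuous (if needed, this is combined with the upper bound $u^n(t,x)\le\psi(x)+C(T-t)^\nu(\|\alpha^n\|_{L^{q'}}+1)$ supplied by Lemma~\ref{lem:holder}, which controls the positive part of $u^n(t,\cdot)$ uniformly). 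Since the identity \eqref{gen_dual_eq} is independent of the chosen subsequence, passing to a subsequence causes no loss. I expect this trace-in-time estimate to be the main obstacle; the variational manipulations are routine given Corollaries~\ref{cor:dual} and \ref{cor:strong_grad}.
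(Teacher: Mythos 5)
Your proof follows the paper's argument essentially step for step: approximate with the $(u^n,\alpha^n)$ from Corollary~\ref{cor:strong_grad}, use \eqref{generalformula} with $\xi=u^n$ on $[0,t]$ and on $[t,T]$, pass to the limit to obtain two one-sided inequalities, and upgrade both to equalities by adding and comparing against \eqref{dual_eq}. The single place where you depart slightly is the justification of $\into u^n(t)\,m(t,dx)\to\into u(t)\,m(t,dx)$: the paper invokes Aubin--Lions compactness (Simon's theorem, using the uniform bounds on $u^n$ in $L^1(W^{1,r})$ and $u^n_t$ in $L^1(L^1)$) to get slicewise $L^p$ convergence, whereas you instead extract slicewise convergence of $u^n(t)$ in $W^{1,1}(\Omega)$ and of $\nabla u^n(t)$ in $L^r(\Omega)$ from the strong $L^1$-in-time norm convergence and then apply the Sobolev embedding; both routes use Assumption~\ref{hp4} in the same way (to ensure $rN/(N-r)>q'$) and reach the same conclusion, so this is a cosmetic rather than a substantive difference. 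Your parenthetical appeal to Lemma~\ref{lem:holder} is not needed for this step and does not appear in the paper's proof.
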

As previously mentioned, equality \eqref{dual_eq} is formally obtained by using $u$ as a test function in the equation for $m$ and integrating over $Q_T$. Here,  we extend this result by proving the validity of identity on $Q_t=[0,t]\times\Omega$. This generalization plays a crucial role in establishing the uniqueness of solutions for Problem \ref{var3}, as shown in Theorem \ref{uniq_var3}.

\begin{proof}[Proof of Lemma 8.1]
	By Proposition \ref{prop_regM}, we have $(m,w)\in L^\theta(Q_T;\R^{N+1})$, for some $\theta>1$.
	We consider the approximation $(u^n,\alpha^n)\in\mathscr I\times\mathcal C^\infty$ provided by Corollary \ref{cor:strong_grad}.  By applying \eqref{generalformula} with $\xi=u^n$, we get for all $t\in[0,T]$,
	$$
	\into u^n(0) m_0 dx+\inti \big(mu^n_t+w\cdot\nabla u^n\big)dxds+\int_0^t\int_{\partial\Omega} ju^n dxds=\into u^n(t)  m(t,dx) .
	$$
	Since $u^n\to u$ in $L^1(W^{1,1})$, $u^n(0)\weak u(0)$ and $-u^n_t+{H(x,\nabla u^n)}\le\alpha^n$, we can argue as in Corollary \ref{cor:dual} to obtain
	$$
	\into u(0) dm_0+\int_0^t\int_{\partial\Omega} ju dxds\le\limsup_{n\to+\infty}\into u^n(t) m(t,dx)+\inti \left[m{L\left(x,\frac wm\right)}+ m\alpha\right]dxds .
	$$
	To handle the $\limsup$, we observe that $u^n$ is uniformly bounded in $L^1([0,T];W^{1,r}(\Omega))$ and $u^n_t$ is uniformly bounded in $L^1([0,T];L^1(\Omega))$. Thus, 
	by applying the compactness result in 
	 \cite[Corollary 4 and Theorem 5]{Simon}, we have that $u^n$ is relatively compact in 
	 $L^1([0,T];L^p(\Omega))$ for $p<\frac{rN}{N-r}$.
	 Therefore, up to a subsequence, $u^n\to u$ in $L^1([0,T];L^p(\Omega))$. Furthermore, $u^n(t)\to u(t)$ in $L^p(\Omega)$ for a.e. $t\in(0,T)$. From Assumption \ref{hp4}, we have
	$$
	r>\frac{Nq}{Nq+q-2};
$$
	that is, 
	$$
	\frac{rN}{N-r}>q'
	$$
	which implies
	$$
	v^n(t)\to v(t)\quad\text{in }L^{q'}(\Omega)\text{ for a.e. }t\in(0,T) .
	$$
	Because $m(t)\in L^q(\Omega)$ for a.e. $t\in(0,T)$, we have $\into u^n(t)m(t) dx\to\into u(t)m(t) dx$. Therefore,
	\begin{equation}\label{leq}
		\into u(0) dm_0+\int_0^t\int_{\partial\Omega} ju dxds\le\into u(t)m(t) dx+\inti \left[m{L\left(x,\frac wm\right)}+ m\alpha\right]dxds .
	\end{equation}
	Similarly, we get
	$$
	\into u(t)m(t) dx+\int_t^T\int_{\partial\Omega} ju dxds\le\into\psi  m(T,dx)+\intf \left[m{L\left(x,\frac wm\right)}+ m\alpha\right]dxds .
	$$
	Using the preceding inequality in \eqref{dual_eq}, we get
	$$
	\into u(0) dm_0+\int_0^t\int_{\partial\Omega} ju dxds\ge\into u(t)m(t) dx+\inti \left[m{L\left(x,\frac wm\right)}+ m\alpha\right]dxds .
	$$
	The preceding inequality, together with \eqref{leq}, implies \eqref{gen_dual_eq}.
\end{proof}

Now, we establish the uniqueness result for Problem \ref{var3}, adapting the ideas in \cite{Graber}.
\begin{teo}\label{uniq_var3}
	Consider the setting of Problem~\ref{var3}, and suppose Assumptions \ref{hp}, \ref{hp2}, \ref{hp3}, and \ref{hp4} hold. Then, if $(u,\alpha)\in\mathscr K$ and $(v,\beta)\in\mathscr K$ solve Problem \ref{var3}, we have $\alpha=\beta$ and $u=v$ a.e. in the set $\{\alpha>f(t,x,0)\}$.
\end{teo}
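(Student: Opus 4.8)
The plan is to combine the strict convexity hidden in $\mathcal K$ with the duality results of Sections~\ref{sec4}--\ref{sec7} and the energy identity of Lemma~\ref{lem81}. As a preliminary reduction, replace $\alpha,\beta$ by $\alpha\vee f(\cdot,\cdot,0)$ and $\beta\vee f(\cdot,\cdot,0)$; by \eqref{vanish_Fstar} this changes neither $\mathcal K$, nor membership in $\mathscr K$, nor the set $\{\alpha>f(t,x,0)\}$, so we may assume $\alpha,\beta\ge f(\cdot,\cdot,0)$. Since $H$ is convex, $\mathscr K$ is convex and $\mathcal K$ is convex and \emph{affine} in its first argument; hence the whole segment $(u^\lambda,\alpha^\lambda):=\bigl((1-\lambda)u+\lambda v,\,(1-\lambda)\alpha+\lambda\beta\bigr)$, $\lambda\in[0,1]$, lies in $\mathscr K$ and minimizes $\mathcal K$. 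Subtracting $(1-\lambda)\mathcal K(u,\alpha)+\lambda\mathcal K(v,\beta)$ from $\mathcal K(u^\lambda,\alpha^\lambda)$, the affine part cancels, leaving
\[
\intif\bigl[F^*(t,x,\alpha^\lambda)-(1-\lambda)F^*(t,x,\alpha)-\lambda F^*(t,x,\beta)\bigr]\,dxdt=0 ;
\]
the integrand is nonpositive, hence vanishes a.e. On $[f(t,x,0),+\infty)$ the map $s\mapsto F^*(t,x,s)$ is strictly convex, since its derivative there equals $f(t,x,\cdot)^{-1}$ by \eqref{hp:CC} and $f$ is strictly increasing; taking $\lambda=\tfrac12$ therefore forces $\alpha=\beta$ a.e.

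Next, let $(m,w)$ be the \emph{unique} minimizer of $\mathcal M$ provided by Theorem~\ref{duality}. By Theorem~\ref{teo:ex_mfg}, both $(u,m)$ and $(v,m)$ are weak solutions of Problem~\ref{problem_mfg}; in particular $w=-mH_p(\nabla u)=-mH_p(\nabla v)$ in $\{m>0\}$, and strict convexity of $H$ makes $H_p$ injective, so $\nabla u=\nabla v$ a.e.\ in $\{m>0\}$, and by Item~\ref{solitem2} of Definition~\ref{defmfg} also $u_t^{ac}=v_t^{ac}$ a.e.\ there. Moreover $\{\alpha>f(t,x,0)\}=\{m>0\}$ up to a null set: on $\{m>0\}$ one has $\alpha=f(t,x,m)>f(t,x,0)$, while on $\{m=0\}$, $F^*(t,x,\alpha)=0$ forces $\alpha=f(t,x,0)$ after the normalization. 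Put $\phi:=u-v$, so $\phi(T)=0$. Applying Lemma~\ref{lem81} to $(u,m,w,\alpha)$ and to $(v,m,w,\alpha)$ and subtracting (the terms with $mL(w/m)$ and $m\alpha$ coincide) gives, for a.e.\ $t\in(0,T]$,
\[
\into\phi(0)\,m_0\,dx+\int_0^t\!\int_{\partial\Omega}j\,\phi\,dxds=\into\phi(t)\,m(t,dx).
\]

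It remains to promote $\nabla\phi=0$ on $\{m>0\}$ to $\phi=0$ a.e.\ on $\{m>0\}$, and here Assumption~\ref{hp3} is decisive: because the only flux through $\partial\Omega$ is the strictly positive inflow $j$ and $m_0>0$, the total mass is nondecreasing and, more precisely, $m(t)$ dominates the forward transport of $m_0$ along the player flow $\dot\gamma=-H_p(\nabla u)$, which stays inside $\Omega$ since $j>0$ forces $H_p(\nabla u)\cdot\nu>0$ on $\partial\Omega$; consequently $\{m>0\}$ coincides with $Q_T$ up to a null set. Then $\nabla\phi=0$ a.e.\ on $Q_T$, so, $\Omega$ being a connected rectangle, $\phi(t,\cdot)\equiv c(t)$ for a.e.\ $t$. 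Inserting this into the identity above and using $\into m(t,dx)=M(t):=\|m_0\|_{L^1}+\int_0^t\!\int_{\partial\Omega}j\,dxds$ (take $\xi\equiv1$ in \eqref{generalformula}), we get $c(0)M(0)+\int_0^t c(s)J(s)\,ds=c(t)M(t)$ with $J(t):=\int_{\partial\Omega}j(t,\cdot)\,dx>0$ and $M\ge M(0)=\|m_0\|_{L^1}>0$; since the left-hand side and $M$ are absolutely continuous with $M>0$, $c$ is absolutely continuous and differentiation yields $c'(t)M(t)=0$, hence $c$ is constant, while $\phi(T)=0$ forces $c\equiv0$, i.e.\ $u=v$ a.e.

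I expect the genuine obstacle to be the rigorous justification of this mass-transport/positivity statement at the available regularity $\nabla u\in L^r$, where the flow of $-H_p(\nabla u)$ is not classically defined: one handles it through the superposition representation $m(t)=(e_t)_\#\eta$ of the optimal $m$ by a measure $\eta$ on absolutely continuous curves confined to $\overline\Omega$, and it is precisely here — and in the integrability underpinning Lemma~\ref{lem81} — that Assumptions~\ref{hp3} and \ref{hp4} are needed. An alternative to the last step that stays natively on $\{m>0\}$ is the value-function representation: for a.e.\ $(t_0,x_0)\in\{m>0\}$ an optimal curve $\gamma$ with $\gamma(t_0)=x_0$ remains in $\{m>0\}$ by mass transport, and $u(t_0,x_0)=\psi(\gamma(T))+\int_{t_0}^T\bigl[f(t,\gamma,m(t,\gamma))+L(\dot\gamma)\bigr]\,dt$; since $\nabla u=\nabla v$ along $\gamma$, the same formula holds with $v$, so $u=v$ on $\{m>0\}$.
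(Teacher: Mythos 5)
Your argument for $\alpha=\beta$ is a legitimate alternative to the paper's route. After normalizing $\alpha,\beta\ge f(\cdot,\cdot,0)$, you exploit that $\mathcal K$ is affine in its first argument, that $\mathscr K$ is convex, and that $F^*(t,x,\cdot)$ is strictly convex on $[f(t,x,0),\infty)$ (since $F^*_s=f^{-1}$ there). The paper instead reads $\alpha=\beta=f(t,x,m)$ directly off Theorem~\ref{teo:ex_mfg} applied to both $(u,\alpha)$ and $(v,\beta)$ with the unique dual minimizer $(m,w)$; both routes are sound.

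The second half, however, has a genuine gap that you yourself flag: you assert that ``$\{m>0\}$ coincides with $Q_T$ up to a null set.'' This is neither proved in the paper nor a consequence of the hypotheses. Strict positivity of $m_0$ and $j$ forces mass in and keeps trajectories inside $\Omega$, but nothing prevents $m$ from vanishing on a set of positive measure in the interior (agents can concentrate elsewhere). Indeed the theorem's statement only asserts uniqueness on $\{\alpha>f(t,x,0)\}=\{m>0\}$ precisely because this set need not be all of $Q_T$. Your superposition/value-function repair ideas are plausible heuristics but are not established at the available regularity ($\nabla u\in L^r$, $u\in BV$), and the paper provides none of that machinery. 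Without this step, you only know $\nabla\phi=0$ and $u_t^{ac}=v_t^{ac}$ on $\{m>0\}$, which is not enough to run the constancy-plus-ODE argument that needs $\nabla\phi=0$ on all of $Q_T$.

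The paper's proof avoids the issue entirely by a different device: it first treats the case $u\ge v$, where \eqref{equality} combined with $m_0>0$, $j>0$ splits into the two separate equalities $\into u(0)\,dm_0=\into v(0)\,dm_0$ and $\intb ju=\intb jv$; then Lemma~\ref{lem81} applied to both pairs gives $\into(u(t)-v(t))\,m(t,dx)=0$ with a sign-definite integrand, which directly yields $u=v$ on $\{m>0\}$ without ever needing information about $\{m=0\}$. The general case is then reduced to $u\ge v$ by showing that $z=u\vee v$ satisfies $(z,\alpha)\in\mathscr K$ (via the regularizations of Lemma~\ref{lem:regularization} and the pointwise a.e.\ inequality for $z^\eps=u^\eps\vee v^\eps$) and minimizes $\mathcal K$ since $z\ge u$ and $m_0,j\ge0$. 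This reduction is the key idea missing from your proposal; without it, the positivity information in Assumption~\ref{hp3} cannot be converted into the needed sign-definiteness, and the argument does not close.
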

\begin{proof}
	Let $(u,\alpha)$, $(v,\beta)\in\mathscr K$ be two minimizers of $\mathcal K$. Consider the unique solution $(m,w)\in\mathscr M$ of Problem \ref{var2} (see Theorem \ref{duality}). By Theorem \ref{teo:ex_mfg}, we have $\alpha=\beta=f(t,x,m)$ in the set $\{m>0\}=\{\alpha>f(t,x,0)\}$, and $w=-m{H_p(x,\nabla u)}=-m{H_p(x,\nabla v)}$. We need to establish that $u=v$ in the set $\{m>0\}$.
	
	We first assume, without loss of generality, that $u \geq v$ almost everywhere; we will later remove this assumption. Under this assumption, since both $(u,\alpha)$ and $(v,\alpha)$ are minimizers with the same $\alpha$, we have
	\begin{equation}\label{equality}
		\into u(0) dm_0+\intb ju dxdt=\into v(0) dm_0+\intb jv dxdt .
	\end{equation}
	Consider the two solutions $(u,m)$ and $(v,m)$ of the MFG system. From \eqref{gen_dual_eq} in Lemma \ref{lem81}, we have, for a.e. $t\in(0,T]$,
	\begin{equation}\label{prima}
		\into u(0) m_0 dx+\int_0^t\int_{\partial\Omega} ju dxds=\into u(t) m(t,dx)+\inti \left[m{L\left(x,\frac wm\right)}+ m\alpha\right]dxds ,
	\end{equation}
	and
	\begin{equation}\label{seconda}
		\into v(0) m_0 dx+\int_0^t\int_{\partial\Omega} jv dxds=\into v(t) m(t,dx)+\inti \left[m{L\left(x,\frac wm\right)}+ m\alpha\right]dxds .
	\end{equation}
	Using \eqref{equality}, we observe that the left-hand sides of \eqref{prima} and \eqref{seconda} are equal. Thus, comparing the right-hand sides, we have
	$$
	\into u(t) m(t,dx)=\into v(t) m(t,dx)
	$$
	which implies
	$$
	\into \big[u(t)-v(t)\big] m(t,dx)=0 .
	$$
	Since the quantity in the integral is non-negative, this implies $u=v$ in $\{m>0\}$.
	
	To complete the proof, we need to remove the assumption that $u\geq v$. For arbitrary $u$ and $v$, we consider $z=u\vee v$, where $z(t,x)=\max\{u(t,x),v(t,x)\}$.. Then $z\ge u,v$. If we prove that $(z,\alpha)\in\mathscr K$ and minimizes the functional $\mathcal K$. The previous computations imply $z=u$ and $z=v$ in the set $\{m>0\}$, which means $u=v$ in $\{m>0\}$. Thus it remains to check that $(z,\alpha)\in\mathscr K$ and minimizes the functional $\mathcal K$.
	
	Since $z\ge u$ and $m_0,j\ge0$, we have $\mathcal K(z,\alpha)\le \mathcal K(u,\alpha)$. 
	Hence, we must prove $(z,\alpha)\in\mathscr K$.
	The regularity of $u$ and $v$  implies $(z,\alpha,\nabla z)\in BV(Q_T)\times L^{q'}(Q_T)\times L^r(Q_T)$. We want to prove that \eqref{eq:distribution} holds for the couple $(z,\alpha)$, i.e. for all $\phi\in\mathcal C_c^\infty((0,T]\times\overline\Omega)$, $\phi\ge0$ we have
	$$
	\intif \Big(z\phi_t+{H(x,\nabla z)}\phi\Big) dxdt\le\intif\alpha\phi dxdt+\into \psi(x)\phi(T,x) dx .
	$$
	We consider the approximating functions $(u^\eps,v^\eps,\alpha^\eps)$ given by Lemma \ref{lem:regularization}, and we define $z^\eps:=u^\eps\vee v^\eps$. Then, we have $z^\eps\to z$ in $L^1(W^{1,1})$, $z^\eps(T)=\psi$ and $-z^\eps_t+H(\nabla z^\eps)\le \alpha^\eps$, where the inequality holds a.e.. This implies
	\begin{align*}
		\intif \Big(z\phi_t+{H(x,\nabla z)}\phi\Big) dxdt=\lim_{\eps\to0}&\intif \Big(z^\eps\phi_t+{H(x,\nabla z^\eps)}\phi\Big) dxdt\\
		=\lim_{\eps\to0}&\intif \phi\Big(\!\!-z^\eps_t+{H(x,\nabla z^\eps)}\Big) dxdt+\into\psi(x)\phi(T,x) dx\\
		\le\lim_{\eps\to0}&\intif \alpha^\eps\phi dxdt+\into\psi(x)\phi(T,x) dx\\
		=&\intif \alpha\phi dxdt+\into\psi(x)\phi(T,x) dx ,
	\end{align*}
	which proves $(z,\alpha)\in\mathscr K$ and concludes the Theorem.
\end{proof}
The uniqueness of the Mean Field Games system solutions can be derived from the preceding Theorem and Theorem \ref{duality}. We report it here to complete the proof of the main theorem.

\begin{proof}[Proof of Theorem \ref{thm:main}]
	The existence part is proved in Theorem \ref{teo:ex_mfg}. 
	Regarding uniqueness, suppose that $(u,m)$ and $(v,\mu)$ are two solutions to Problem \ref{problem_mfg}. By Proposition \ref{reverse}, the pairs $(m,-m H_p(x,\nabla u))$ and $(\mu,-\mu H_p(x,\nabla v))$ solve Problem \ref{var2}. Since, by Theorem \ref{duality}, the solution to Problem \ref{var2} is unique, it follows that $m=\mu$ almost everywhere. Furthermore, $(u,f(t,x,m))$ and $(v,f(t,x,m))$ are minimizers of Problem \ref{var3}. Applying Theorem \ref{uniq_var3}, we conclude that $u=v$ almost everywhere in the set $\{m>0\}$. This completes the proof of the theorem.
\end{proof}

\subsection*{Authors' contributions :} Both authors contributed equaly for this publication. 

\subsection*{Data availability}
The relevant materials can be accessed through the citations provided in this paper. Our research is primarily theoretical and mathematical in nature, and as such, we did not generate or use any code or datasets.

\subsection*{Ethical Statement} This work focuses on mathematical models and does not involve any human or animal subjects. Therefore, no ethical approval was required.

\subsection*{Competing interests:} The authors declare no competing interests.


\printbibliography
\end{document}